\numberwithin{equation}{section}
\newtheorem{remark}{Remark}[section]
\def\lam{{\beta}}
\def\Ome{{\Omega}}
\def\nab{{\nabla}}
\def\vepsi{{\varepsilon}}
\def\p{{\partial}}
\def\reff#1{\eqref{#1}}
\def\norm#1#2{\Vert\,#1\,\Vert_{#2}}
\def\vepsi{\varepsilon}
\def\cT{{\mathcal T}}
\def\no{{\nonumber}}
\def\div{{\mbox{\rm div\,}}}
\def\p{{\partial}}
\def\nab{\nabla}
\def\Ome{\Omega}
\def\lam{\beta}
\newcommand{\bRM}{\mathbf{RM}}
\newcommand{\br}{\mathbf{r}}
\def\ba{\mathbf{a}}
\def\bb{\mathbf{b}}
\def\bC{\mathbf{C}}
\def\bv{\mathbf{v}}
\def\bg{\mathbf{g}}
\def\bn{\mathbf{n}}
\def\bH{\mathbf{H}}
\def\bV{\mathbf{V}}
\def\bL{\mathbf{L}}
\def\bP{\mathbf{P}}
\def\bV{\mathbf{V}}
\def\bX{\mathbf{X}}
\def\R{\mathbb{R}}
\def\bx{{\bf x}}
\begin{document}
	
	
	\title{A new multiphysics finite element method for a Biot model with secondary consolidation\footnote{Last update: \today}}
	
	\author{
		Zhihao Ge\thanks{School of Mathematics and Statistics, Henan University, Kaifeng 475004, P.R. China ({\tt zhihaoge@henu.edu.cn}).
			The work was supported by the National Natural Science Foundation of China under grant No. 11971150.}
		\and
		Wenlong He\thanks{School of Mathematics and Statistics, Henan University, Kaifeng 475004, P.R. China.}
	}
	
	\maketitle
	
	
	\setcounter{page}{1}
	
	
	\begin{abstract}
	In this paper, we propose a new multiphysics finite element method for a Biot model with secondary consolidation in soil dynamics. To better describe the processes of deformation and diffusion underlying in the original model, we reformulate Biot model by a new multiphysics approach, which transforms the fluid-solid coupled problem to a fluid coupled problem--a generalized Stokes problem and a diffusion problem. Then, we give the energy law and prior error estimate of the weak solution. And we design a fully discrete time-stepping scheme to use mixed finite element method for $P_2-P_1-P_1$ element pairs to approximate the space variables and backward Euler method for the time variable, and we prove the discrete energy laws and the optimal convergence order error estimates. Also, we show some numerical examples to verify the theoretical results. Finally, we draw a conclusion to summarize the main results of this paper.
	\end{abstract}
	\begin{keywords}
		Biot model; Stokes equations; multiphysics finite element method; optimal convergence order. 	
	\end{keywords}

	\pagestyle{myheadings}
	\thispagestyle{plain}
	\markboth{ZHIHAO GE, WENLONG HE}{NEW MFEM FOR A BIOT MODEL WITH SECONDARY CONSOLIDATION}
	
	\section{Introduction}
	Biot model in soil dynamics is widely distributed and plays a particularly important role in the construction of civil engineering, such as industrial and civil buildings, roads and bridges, water conservancy facilities, embankments and ports (cf. \cite{biot,20220106,20220104,20220105,20211214}). Also, the general Biot model is widely used in various fields such as geophysics, biomechanics, chemical engineering, materials science and so on, one can refer to \cite{2,3,6,7,8,10,biot,coussy04,de86}.   Compression deformation of saturated clay is usually based on Terzaghi's consolidation theory and Biot's consolidation theory(cf.\cite{biot,20220105,20220103}). Secondary consolidation is a process in which the volume of saturated clay decreases with time after the completion of primary consolidation, which plays an important role in the study of clay.  In this paper, we consider the following Biot model with secondary consolidation (cf. \cite{20210819}):
	\begin{alignat}{2}\label{1.1} 
		-\lambda^{*}\nabla(\div\pmb\tau)_{t}-\div\sigma(\pmb\tau) + b_0 \nab p &= \mathbf{F}
		&&\qquad \mbox{in } \Ome_T:=\Ome\times (0,T)\subset \mathbf{\R}^d\times (0,T),\\
		(a_0p+b_0 \div \pmb\tau)_t + \div \pmb\zeta_f &=\phi &&\qquad \mbox{in } \Ome_T,\label{1.2}
	\end{alignat}
	where
	\begin{align} 
		&\sigma(\pmb\tau)=\gamma\varepsilon(\pmb\tau)+\beta tr(\varepsilon(\pmb\tau))\mathbf{I},~~~~~ \varepsilon(\pmb\tau)=\dfrac{1}{2}(\nabla\pmb\tau+\nabla^{T}\pmb\tau), \label{1.3}\\
		&\pmb\zeta_f:= -\frac{K}{\theta_f} \bigl(\nab p -\rho_f \bg \bigr). \label{1.4}
	\end{align}
	Here $\pmb\tau$ denotes the displacement vector of the solid and $p$ denotes the pressure of the solvent. $\mathbf{I}$ denotes the $d\times d$ identity matrix and $\vepsi(\pmb\tau)$ is
	known as the deformed Green strain tensor. $\mathbf{F}$ is the body force. The permeability tensor $K=K(x)$ is assumed to be symmetric and uniformly positive definite in the sense that there exists positive constants $K_1$ and $K_2$ such that
	$K_1|\zeta|^2\leq K(x)\zeta\cdot \zeta \leq K_2 |\zeta|^2$ for a.e. $x\in\Omega$ and $\zeta\in \mathbf{\R}^d$; the solvent viscosity $\theta_f$, Biot-Willis constant
	$b_0$, $ \lambda^{*}\geq0 $ and the constrained specific storage coefficient
	$a_0$. In addition, $\sigma(\pmb\tau)$ is called the (effective) stress tensor. $\pmb\zeta_f$ is the volumetric solvent flux and (\ref{1.4}) is called the well-known Darcy's law. $ \beta $ and $ \gamma $ are
	Lam\'e constants, $\widehat{\sigma}(\pmb\tau, p):=\sigma(\pmb\tau)-b_0 p \mathbf{I}$ is the total stress tensor. We assume that $\rho_f\not\equiv 0$, which is a realistic assumption.
	
	As for Biot model with primary consolidation, there are more research results, for example, Phillips and Wheeler propose and analyze a continuous-in-time linear poroelasticity model in \cite{pw07};  Feng, Ge and Li in \cite{fgl14} propose  propose  a stable finite element method by a multiphysics approach, and so on.   The secondary consolidation was introduced and developed by Cushman and Murad in \cite{20211214}. Showalter find the term of $ \lambda^{*}\nabla(\div\pmb\tau)_{t} $ has a  effect for the momentum equation when $ \lambda^{*}>0 $ similar to that of $ a_0>0 $ for the diffusion equation in \cite{20210819}. Gaspar introduce a stabilized method for Biot Model with secondary
	consolidation by using the finite difference method on staggered grids in \cite{20220101}. Lewis and Schrefler use the finite element to study Biot model with secondary consolidation but not overcome the ``locking phenomenon" in \cite{20220102}. In this paper, we reformulate the Biot model by introducing of variables $ q=\div\pmb\tau,\varpi=a_{0}p+b_{0}q,\delta:=b_{0}p-\beta q-\lambda^{*}q_{t} $, which is different from the introduced variables in \cite{fgl14}. We successfully transformed the fluid-solid coupled problem \reff{1.1}-\reff{1.4} to a fluid coupled problem \reff{2.5}-\reff{2.10}.  We also give the energy estimates and prior error estimate and prove that the time-stepping method has the optimal convergence order.  One  can see that the pressure produce the numerical oscillation in Figure \ref{figure_9} and Figure \ref{figure_11} by using the element pair $P_2-P_1$ to solve the original model when $ a_{0} $ trends to $ 0 $, but the pressure is stable in in Figure \ref{figure_10} and Figure \ref{figure_12} by using the element pair $P_2-P_1-P_1$ in the reformulated model when $ a_{0} $ trends to $ 0 $.
	
	The remainder of this paper is organized as follows. In Section \ref{sec-2}, we reformulate the original model based on a multiphysics approach to a fluid-fluid coupling system and give the definition of weak solution to the original model and the reformulated model. Besides, we give the energy law and prior error estimate. In Section \ref{sec-3}, we propose and analyze the coupled and decoupled time stepping methods based on the multiphysics approach and prove that the time-stepping has the optimal convergence order. In Section \ref{sec-4}, we provide some numerical experiments to verify the theoretical results of the proposed approach and methods. Finally, we draw a conclusion to summarize the main results of this paper.
	
	\section{Multiphysics reformulation and PDE analysis}\label{sec-2}
	To close the above system, we set the following boundary and initial conditions in this paper:
	\begin{alignat}{2} \label{2.1}
		\widehat{\sigma}(\pmb\tau,p)\bn=\lambda^{*}(\div\pmb\tau)_{t}\bn+\sigma(\pmb\tau)\bn-b_0 p \bn &= \mathbf{F}_1
		&&\qquad \mbox{on } \p\Ome_T:=\p\Ome\times (0,T),\\
		\pmb\zeta_f\cdot\bn= -\frac{K}{\theta_f} \bigl(\nab p -\rho_f \bg \bigr)\cdot \bn
		&=\phi_1 &&\qquad \mbox{on } \p\Ome_T, \label{2.2} \\
		\pmb\tau=\pmb\tau_0,\qquad p&=p_0 &&\qquad \mbox{in } \Ome\times\{t=0\}. \label{2.3}
	\end{alignat}
	Introduce new variables
	\[
	q:=\div \pmb\tau,\quad \varpi:=a_0p+b_0 q,\quad \delta:=b_0 p -\lam q-\lambda^{*}q_{t}.
	\]
	
	In some engineering literature, Lam\'e constant
	$\gamma$ is also called the {\em shear modulus} and denoted by $G$, and
	$B:=\lam +\frac23 G$ is called the {\em bulk modulus}. $\lam,~\gamma$ and $B$
	are computed from the {\em Young's modulus} $E$ and the {\em Poisson ratio}
	$\nu$ by the following formulas
	\[
	\lam=\frac{E\nu}{(1+\nu)(1-2\nu)},\qquad \gamma=G=\frac{E}{2(1+\nu)}, \qquad
	B=\frac{E}{3(1-2\nu)}.
	\]
	
	It is easy to check that
	\begin{align}\label{2.4}
		p=\chi_1 \delta + \chi_2 \varpi+\lambda^{*}\chi_1q_{t}, \qquad q=\chi_1 \varpi-\chi_3 \delta-\lambda^{*}\chi_3q_{t},
	\end{align}
	where $\chi_1= \frac{b_0}{b_0^2+\lam a_0},
	\chi_2=\frac{\lam}{b_0^2+\lam a_0},
	\chi_3=\frac{a_0}{b_0^2+\lam a_0}$.
	
	Then the problem \reff{1.1}-\reff{1.4} can be rewritten as
	\begin{alignat}{2} \label{2.5}
		-\gamma\div\varepsilon(\pmb\tau) + \nab \delta &= \mathbf{F} &&\qquad \mbox{in } \Ome_T,\\
		\chi_3\delta +\div \pmb\tau+\lambda^{*}\chi_3\div\pmb\tau_{t} &=\chi_1\varpi &&\qquad \mbox{in } \Ome_T, \label{2.6}\\
		\varpi_t - \frac{1}{\theta_f} \div[K (\nab (\chi_1 \delta + \chi_2 \varpi+\lambda^{*}\chi_1q_{t})-\rho_f\bg)]&=\phi
		&&\qquad \mbox{in } \Ome_T. \label{2.7}
	\end{alignat}
	The boundary and initial conditions \reff{2.1}-\reff{2.3} can be rewritten as
	\begin{alignat}{2} \label{2.8}
		\lambda^{*}(\div\pmb\tau)_{t}\bn+\sigma(\pmb\tau)\bn-b_0 (\chi_1 \delta + \chi_2 \varpi+\lambda^{*}\chi_1q_{t}) \bn &= \mathbf{F}_1
		&&~\mbox{on } \p\Ome_T:=\p\Ome\times (0,T),\\
		-\frac{K}{\theta_f} \bigl(\nab (\chi_1 \delta + \chi_2 \varpi+\lambda^{*}\chi_1q_{t}) -\rho_f \bg \bigr)\cdot \bn
		&=\phi_1 &&~ \mbox{on } \p\Ome_T, \label{2.9} \\
		\pmb\tau=\pmb\tau_0,\qquad p&=p_0 &&~\mbox{in } \Ome\times\{t=0\}. \label{2.10}
	\end{alignat}

In this paper, $\Omega \subset \R^d \,(d=2,3)$ denotes a bounded polygonal domain with the boundary
$\p\Ome$. The standard function space notation is adopted in this paper, their
precise definitions can be found in \cite{bs08,cia,temam}.
In particular, $(\cdot,\cdot)$ and $\langle \cdot,\cdot\rangle$
denote respectively the standard $L^2(\Ome)$ and $L^2(\p\Ome)$ inner products. For any Banach space $B$, we let $\mathbf{B}=[B]^d$, and use $\mathbf{B}^\prime$ to denote its dual space. In particular, we use $(\cdot,\cdot)_{\small\rm dual}$ 
to denote the dual product on $\bH^1(\Ome)' \times \bH^1(\Ome)$, and $\norm{\cdot}{L^p(B)}$ is a shorthand notation for
$\norm{\cdot}{L^p((0,T);B)}$.\\
We also introduce the function spaces
\begin{align*}
	&L^2_0(\Omega):=\{q\in L^2(\Omega);\, (q,1)=0\}, \qquad \bX:= \bH^1(\Ome).
\end{align*}
From \cite{temam}, it is well known  that the following  inf-sup condition holds in the space $\bX\times L^2_0(\Ome)$:
\begin{align}\label{2.11}
	\sup_{\bv\in \bX}\frac{(\div \bv,\varphi)}{\norm{\bv}{H^1(\Ome)}}
	\geq \alpha_0 \norm{\varphi}{L^2(\Ome)} \qquad \forall
	\varphi\in L^2_0(\Ome),\quad \alpha_0>0.
\end{align}
Let
\[
\bRM:=\{\br:=\ba+\bb \times x;\, \ba, \bb, x\in \R^d\}
\]
denote the space of infinitesimal rigid motions. It is well known \cite{bs08, gra, temam} that $\bRM$ is the kernel of
the strain operator $\vepsi$, that is, $\br\in \bRM$ if and only if
$\vepsi(\br)=0$. Hence, we have
\begin{align}
	\vepsi(\br)=0,\quad \div \br=0 \qquad\forall \br\in \bRM. \label{2.12}
\end{align}
Let $\bL^2_\bot(\p\Ome)$ and $\bH^1_\bot(\Ome)$ denote respectively the subspaces of $\bL^2(\p\Ome)$ and $\bH^1(\Ome)$ which are orthogonal to $\bRM$, that is,
\begin{align*}
	&\bH^1_\bot(\Ome):=\{\bv\in \bH^1(\Ome);\, (\bv,\br)=0\,\,\forall \br\in \bRM\},
	\\
	&\bL^2_\bot(\p\Ome):=\{\bg\in \bL^2(\p\Ome);\,\langle \bg,\br\rangle=0\,\,
	\forall \br\in \bRM \}.
\end{align*}
It is well known \cite{dautray} that there exists a constant $c_1>0$ such that
\begin{eqnarray}
	\inf_{\br\in \bRM}\|\bv+\br\|_{L^2(\Ome)}
	\le c_1\|\vepsi(\bv)\|_{L^2(\Ome)} \qquad\forall \bv\in\bH^1(\Ome).\label{2.13}
\end{eqnarray}
From \cite{fgl14}, we know that for each $\bv\in \bH^1_\bot(\Ome)$ there holds the following alternative version of the inf-sup condition
\begin{eqnarray}
	\sup_{\bv\in \bH^1_\bot(\Ome)}\frac{(\div \bv,\varphi)}{\norm{\bv}{H^1(\Ome)}}
	\geq \alpha_1 \norm{\varphi}{L^2(\Ome)} \qquad \forall
	\varphi\in L^2_0(\Ome),\quad \alpha_1>0.\label{2.14}
\end{eqnarray}

For convenience, we assume that $\mathbf{F},~ \mathbf{F}_1,~ \phi$
and $\phi_1$ all are independent of $t$ in the remaining of the paper. We note that all the results of this paper can be easily extended to the case of time-dependent
source functions.
\begin{definition}\label{weak1}
	Let $\pmb\tau_0\in\bH^1(\Ome),~ \mathbf{F}\in\bL^2(\Omega),~
	\mathbf{F}_1\in \bL^2(\p\Ome),~ p_0\in L^2(\Ome),~ \phi\in L^2(\Ome)$,
	and $\phi_1\in  L^2(\p\Ome)$.  Assume $a_0>0$ and
	$(\mathbf{F},\bv)+\langle \mathbf{F}_1,~ \bv \rangle =0$ for any $\bv\in \mathbf{RM}$.
	Given $T > 0$, a tuple $(\pmb\tau,p)$ with
	\begin{alignat*}{2}
		&\pmb\tau\in L^\infty\bigl(0,T; \bH_\perp^1(\Ome)),
		&&\qquad p\in L^\infty(0,T; L^2(\Omega))\cap L^2 \bigl(0,T; H^1(\Omega)\bigr), \\
		&p_t, (\div\pmb\tau)_t \in L^2(0,T;H^{1}(\Ome)')
		&&\qquad 
	\end{alignat*}
	is called a weak solution to the problem \reff{1.1}--\reff{1.4}, if there hold for almost every $t \in [0,T]$
	\begin{alignat}{2}\label{2.15}
		&\lambda^{*}((\div\pmb\tau)_{t},\div\bv)+\gamma\bigl( \varepsilon(\pmb\tau), \vepsi(\bv) \bigr)&& \\
		&\hskip 0.5in
		+\lam\bigl(\div\pmb\tau, \div\bv \bigr)
		-b_0 \bigl( p, \div \bv \bigr)=(\mathbf{F}, \bv)+\langle \mathbf{F}_1,\bv\rangle
		&&\quad\forall \bv\in \bH^1(\Ome), \no \\
		&\bigl((a_0 p +b_0\div\pmb\tau)_t, \varphi \bigr)_{\rm dual}
		+ \frac{1}{\theta_f} \bigl( K(\nab p-\rho_f\bg), \nab \varphi \bigr)
		\label{2.16} \\
		&\hskip 2in =\bigl(\phi,\varphi\bigr)
		+\langle \phi_1,\varphi \rangle
		&&\quad\forall \varphi \in H^1(\Ome), \no  \\
		&\pmb\tau(0) = \pmb\tau_0,\qquad p(0)=p_0.  && \label{2.17}
	\end{alignat}
\end{definition}
Similarly, we can define the weak solution to the problem \reff{2.5}-\reff{2.10} as follows:
\begin{definition}\label{weak2}
	Let $\pmb\tau_0\in \bH^1(\Ome), \mathbf{F} \in \bL^2(\Omega),
	\mathbf{F}_1 \in \bL^2(\p\Ome), p_0\in L^2(\Ome), \phi\in L^2(\Ome)$,
	and $\phi_1\in L^2(\p\Ome)$.  Assume $a_0>0$ and
	$(\mathbf{F},\bv)+\langle \mathbf{F}_1, \bv \rangle =0$ for any $\bv\in \mathbf{RM}$.
	Given $T > 0$, a $5$-tuple $(\pmb\tau,\delta,\varpi,p,q)$ with
	\begin{alignat*}{2}
		&\pmb\tau\in L^\infty\bigl(0,T; \bH_\perp^1(\Ome)), &&\qquad
		\delta\in L^\infty \bigl(0,T; L^2(\Omega)\bigr), \\
		&\varpi\in L^\infty\bigl(0,T; L^2(\Omega)\bigr)
		\cap H^1\bigl(0,T; H^{1}(\Omega)'\bigr),
		&&\qquad q\in L^\infty(0,T;L^2(\Ome)), \\
		&p\in L^\infty \bigl(0,T; L^2(\Omega)\bigr) \cap L^2 \bigl(0,T; H^1(\Omega)\bigr)  &&
	\end{alignat*}
	is called a weak solution to the problem \reff{2.5}-\reff{2.7},
	if there hold for almost every $t \in [0,T]$
	\begin{alignat}{2}\label{2.18}
		&\gamma\bigl( \varepsilon(\pmb\tau), \vepsi(\bv) \bigr)-\bigl( \delta, \div\bv \bigr)= (\mathbf{F}, \bv)+\langle \mathbf{F}_1,\bv\rangle
		&&\quad\forall \bv\in \bH^1(\Ome),\\
		&\chi_3 \bigl( \delta, \varphi \bigr) +\bigl(\div\pmb\tau, \varphi \bigr)+\lambda^{*}\chi_3(\div\pmb\tau_{t},\varphi)
		= \chi_1\bigl(\varpi, \varphi \bigr) &&\quad\forall \varphi \in L^2(\Ome), \label{2.19}  \\
		&\bigl(\varpi_t, \psi \bigr)_{\rm dual}
		+\frac{1}{\theta_f} \bigl(K(\nab (\chi_1\delta +\chi_2\varpi+\lambda^{*}\chi_1\div\pmb\tau_{t}) -\rho_f\bg), \nab \psi \bigr) \label{2.20} \\
		&\hskip 1in= (\phi, \psi)+\langle \phi_1,\psi\rangle &&\quad\forall \psi \in H^1(\Ome) , \no  \\
		&p:=\chi_1\delta +\chi_2\varpi+\lambda^{*}\chi_1\div\pmb\tau_{t}, ~~
		q:=\chi_1\varpi-\chi_3\delta-\lambda^{*}\chi_3\div\pmb\tau_{t}, \label{2.21} \\
		&\varpi(0)= \varpi_0:=a_0p_0+b_0 q_0,  && \label{2.22}
	\end{alignat}
	where $q_0:=\div \pmb\tau_0$, $u_0$ and $p_0$ are same as in Definition \reff{weak1}.
\end{definition}
\begin{remark}
	The introduced new variables $ \delta $ and $ \varpi $ transform \reff{2.15} into a more concise form \reff{2.18}, which is a significant advantage than original model.
\end{remark}

\begin{lemma}\label{lem2.2}
	Every weak solution $(\pmb\tau,\delta,\varpi)$ of the problem \reff{2.18}--\reff{2.22} satisfies
	the following energy law
	\begin{eqnarray}\label{2.26}
		&&\qquad J(s)+\lambda^{*}\int_{0}^{s}\left\|(\div\pmb\tau)_{t} \right\|_{L^{2}(\Omega)}^{2}dt+ \frac{1}{\theta_f} \int_0^s \bigl(K(\nab(\chi_1\delta+\chi_2\varpi+\lambda^{*}q_{t})-\rho_f\bg),
		\\
		&&\quad \nab (\chi_1\delta+\chi_2\varpi+\lambda^{*}q_{t})\bigr)\, dt -\int_0^s \bigl(\phi, \chi_1\delta+\chi_2\varpi+\lambda^{*}q_{t})\, dt-\int_0^s \langle \phi_1, \chi_1\delta+\chi_2\varpi+\lambda^{*}q_{t} \rangle\, dt = J(0)\no
	\end{eqnarray}
	for all $t\in [0, T]$,  where
	\begin{align}\label{2.27}
		J(s):&= \frac12 \Bigl[\gamma\left\|\varepsilon(\pmb\tau(s)) \right\|_{L^{2}(\Omega)}^{2}+\chi_2 \norm{\varpi(s)}{L^2(\Ome)}^2+\chi_3 \norm{\delta(s)}{L^2(\Ome)}^2 \no\\
		&+(\lambda^{*})^{2}\chi_3\left\| \div\pmb\tau_{t}(s)\right\| _{L^{2}(\Omega)}^{2}+\lambda^{*}\chi_3(\div\pmb\tau_{t}(s),\delta(s))-2\bigl(\mathbf{F},\pmb\tau(s)\bigr) -2\langle \mathbf{F}_1, \pmb\tau(s) \rangle \Bigr].
	\end{align}
	
	Moreover, there holds
	\begin{align}\label{2.28}
		\norm{\varpi_t}{L^2(0.T;H^{1}(\Ome)')}
		\leq\frac{1}{\theta_f} \norm{K\nab (\chi_1\delta+\chi_2\varpi+\lambda^{*}q_{t})-\rho_f \bg}{L^2(\Ome)} 
		+ \|\phi\|_{L^2(\Ome)} + \|\phi_1\|_{L^2(\p\Ome)} < \infty.
	\end{align}
	
\end{lemma}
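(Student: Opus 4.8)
The plan is to derive the energy law \reff{2.26} by making the standard choice of test functions in the weak formulation \reff{2.18}--\reff{2.22}, differentiating the first two equations in time where needed, and then summing. Concretely, first I would differentiate \reff{2.18} in $t$ and take $\bv=\pmb\tau_t$, which produces $\gamma(\vepsi(\pmb\tau),\vepsi(\pmb\tau))_t/2 - (\delta,\div\pmb\tau_t) = 0$ after using that $\mathbf{F}$, $\mathbf{F}_1$ are time-independent (this is exactly why the assumption was imposed just before Definition \ref{weak1}); equivalently, to get the term $-2(\mathbf{F},\pmb\tau(s)) - 2\langle\mathbf{F}_1,\pmb\tau(s)\rangle$ in $J$ I would instead keep \reff{2.18} undifferentiated, test with $\bv=\pmb\tau_t$, and recognize $(\mathbf{F},\pmb\tau_t)+\langle\mathbf{F}_1,\pmb\tau_t\rangle = \tfrac{d}{dt}[(\mathbf{F},\pmb\tau)+\langle\mathbf{F}_1,\pmb\tau\rangle]$. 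Next, in \reff{2.19} I would take $\varphi=\delta_t$ (legitimate since $\delta\in L^\infty(0,T;L^2)$ and one formally has $\delta_t\in L^2$), giving $\chi_3(\delta,\delta_t) + (\div\pmb\tau,\delta_t) + \lambda^*\chi_3(\div\pmb\tau_t,\delta_t)$; the middle term $(\div\pmb\tau,\delta_t)$ is the one that will cancel against $-(\delta,\div\pmb\tau_t)$ from the momentum equation after an integration by parts in $t$, i.e. using $(\div\pmb\tau,\delta_t) + (\div\pmb\tau_t,\delta) = \tfrac{d}{dt}(\div\pmb\tau,\delta)$.

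Then, in the diffusion equation \reff{2.20} I would take $\psi = p = \chi_1\delta+\chi_2\varpi+\lambda^*\chi_1\div\pmb\tau_t$, which is admissible because $p\in L^2(0,T;H^1(\Ome))$. This yields the dissipation term $\tfrac{1}{\theta_f}(K(\nab p - \rho_f\bg),\nab p)$ appearing on the second line of \reff{2.26}, the source terms $-(\phi,p) - \langle\phi_1,p\rangle$, and the crucial coupling term $(\varpi_t,p)_{\rm dual} = \chi_1(\varpi_t,\delta) + \chi_2(\varpi_t,\varpi) + \lambda^*\chi_1(\varpi_t,\div\pmb\tau_t)$. The term $\chi_2(\varpi_t,\varpi)$ gives $\tfrac{\chi_2}{2}\tfrac{d}{dt}\|\varpi\|^2_{L^2}$ directly. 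The remaining pieces $\chi_1(\varpi_t,\delta)$ and $\lambda^*\chi_1(\varpi_t,\div\pmb\tau_t)$ must be matched against contributions coming from differentiating \reff{2.19} in $t$ (tested against something) or from the $\delta_t$-test above; I would differentiate \reff{2.19} in $t$ and test with $\varphi=\chi_1\delta/\chi_3$-type combinations — more precisely, the bookkeeping is: from $\chi_3\delta + \div\pmb\tau + \lambda^*\chi_3\div\pmb\tau_t = \chi_1\varpi$ one gets $\chi_1\varpi_t = \chi_3\delta_t + \div\pmb\tau_t + \lambda^*\chi_3\div\pmb\tau_{tt}$, so $\chi_1(\varpi_t,\delta) = \chi_3(\delta_t,\delta) + (\div\pmb\tau_t,\delta) + \lambda^*\chi_3(\div\pmb\tau_{tt},\delta)$, and similarly $\lambda^*\chi_1(\varpi_t,\div\pmb\tau_t) = \lambda^*\chi_3(\delta_t,\div\pmb\tau_t) + \lambda^*\|\div\pmb\tau_t\|^2 + (\lambda^*)^2\chi_3(\div\pmb\tau_{tt},\div\pmb\tau_t)$. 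Here $\lambda^*\|\div\pmb\tau_t\|^2$ is precisely the second term on the left of \reff{2.26}, $(\lambda^*)^2\chi_3(\div\pmb\tau_{tt},\div\pmb\tau_t) = \tfrac{(\lambda^*)^2\chi_3}{2}\tfrac{d}{dt}\|\div\pmb\tau_t\|^2$ matches the corresponding term in $J$, and the cross-terms $\lambda^*\chi_3(\delta_t,\div\pmb\tau_t)$, $\lambda^*\chi_3(\div\pmb\tau_{tt},\delta)$ combine via the product rule into $\tfrac{\lambda^*\chi_3}{2}\tfrac{d}{dt}(\div\pmb\tau_t,\delta)$ — note the factor $\tfrac12$, matching the unusual half-coefficient (no, actually $\lambda^*\chi_3$, without $\tfrac12$) in \reff{2.27}; I would keep careful track of these constants since that is where errors creep in. Finally the leftover terms $\chi_3(\delta_t,\delta) = \tfrac{\chi_3}{2}\tfrac{d}{dt}\|\delta\|^2$ and $(\div\pmb\tau_t,\delta)$ close the remaining parts of $J$ and the cancellation against the momentum equation respectively. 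Integrating the resulting identity in $t$ from $0$ to $s$ gives \reff{2.26}.

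For the bound \reff{2.28}, the plan is simpler: from \reff{2.20}, for any $\psi\in H^1(\Ome)$ with $\|\psi\|_{H^1}\le 1$, estimate $|(\varpi_t,\psi)_{\rm dual}| \le \tfrac{1}{\theta_f}\|K\nab p - \rho_f\bg\|_{L^2}\|\nab\psi\|_{L^2} + \|\phi\|_{L^2}\|\psi\|_{L^2} + \|\phi_1\|_{L^2(\p\Ome)}\|\psi\|_{L^2(\p\Ome)}$, then take the supremum over such $\psi$ (using the trace inequality to absorb $\|\psi\|_{L^2(\p\Ome)}$ into $\|\psi\|_{H^1}$, though the authors seem to suppress that constant) to bound $\|\varpi_t\|_{H^1(\Ome)'}$ pointwise in $t$, and finally integrate in $t$ over $(0,T)$ and invoke the already-established regularity $\delta,\varpi\in L^\infty(0,T;L^2)$, $\div\pmb\tau_t\in L^2$ (from the energy law) together with $\phi\in L^2(\Ome)$, $\phi_1\in L^2(\p\Ome)$ to conclude finiteness.

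The main obstacle I anticipate is \emph{regularity/rigor of the test-function choices} rather than the algebra: the identity is derived by testing with $\pmb\tau_t$, $\delta_t$, $\div\pmb\tau_{tt}$, etc., which a priori only live in weak spaces ($p_t,(\div\pmb\tau)_t\in L^2(0,T;H^1(\Ome)')$ in Definition \ref{weak1}), so these manipulations are formal unless justified by a Galerkin/Faedo approximation argument or an extra a priori bound. The clean way is to prove the energy identity first for the finite-dimensional Galerkin approximations (where all derivatives are smooth and the pairings are genuine inner products), and then pass to the limit using weak/weak-* lower semicontinuity of the norms in $J$ (so the equality becomes an inequality in the limit, or one argues the limit solution inherits enough regularity to upgrade it back to equality). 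A secondary, purely notational subtlety is the coefficient $\lambda^*\chi_3(\div\pmb\tau_t,\delta)$ term in $J$: it is not sign-definite, so $J$ is not obviously an energy (it can be made coercive by a Cauchy–Schwarz/Young absorption against the $(\lambda^*)^2\chi_3\|\div\pmb\tau_t\|^2$ and $\chi_3\|\delta\|^2$ terms, which is presumably done later when this lemma is used for a priori estimates) — but for the identity \reff{2.26} itself no coercivity is needed, only the exact arithmetic of the product rule, so I would flag this but not dwell on it here.
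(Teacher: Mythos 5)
Your proposal follows essentially the same route as the paper's proof: test \reff{2.18} with $\bv=\pmb\tau_t$, use the time-differentiated constraint $\chi_1\varpi_t=\chi_3\delta_t+\div\pmb\tau_t+\lambda^{*}\chi_3\div\pmb\tau_{tt}$ to expand the coupling terms $\chi_1(\varpi_t,\delta)$ and $\lambda^{*}\chi_1(\varpi_t,\div\pmb\tau_t)$ arising from testing \reff{2.20} with $\psi=p$, and collect total time derivatives — this is exactly the paper's computation \reff{2.32}--\reff{2.34}, and your treatment of \reff{2.28} is also identical.

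One internal inconsistency to fix: you first propose taking $\varphi=\delta_t$ in the \emph{undifferentiated} \reff{2.19} and claim $(\div\pmb\tau,\delta_t)$ cancels $-(\delta,\div\pmb\tau_t)$ after integration by parts in $t$; it does not — that combination equals $\tfrac{d}{dt}(\div\pmb\tau,\delta)-2(\div\pmb\tau_t,\delta)$, leaving spurious terms. The paper instead differentiates \reff{2.19} in $t$ and tests with $\varphi=\delta$, so the resulting $(\div\pmb\tau_t,\delta)$ cancels $-(\delta,\div\pmb\tau_t)$ exactly; your subsequent bookkeeping (which works with $\chi_1(\varpi_t,\delta)$ rather than $\chi_1(\varpi,\delta_t)$) silently switches to this correct choice, so the substance of the argument is fine. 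Your product rule also gives the full coefficient $\lambda^{*}\chi_3\tfrac{d}{dt}(\div\pmb\tau_t,\delta)$, consistent with the paper's \reff{2.34} (the cross term there carries coefficient $\lambda^{*}\chi_3$, not $\tfrac{\lambda^{*}\chi_3}{2}$), and your remarks on the formal nature of the test functions (the paper simply restricts to $\pmb\tau_t\in L^2(0,T;L^2(\Ome))$) and on $J$ not being sign-definite are apt.
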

\begin{proof}
	We only consider the case of $\pmb\tau_t\in L^2(0,T; L^2(\Ome))$. Setting $\bv=\pmb\tau_t$ in \reff{2.18},
	differentiating \reff{2.19} with respect to $t$ followed by taking $\varphi=\delta$, and setting
	$\psi=p=\chi_1\delta +\chi_2\varpi+\lambda^{*}\chi_1\div\pmb\tau_{t}$ in \reff{2.20}, we have
	\begin{eqnarray}
		&\gamma \bigl(\varepsilon(\pmb\tau), \vepsi(\pmb\tau_t) \bigr)-\bigl( \delta, \div \pmb\tau_{t} \bigr)= (\mathbf{F}, \pmb\tau_{t})+\langle \mathbf{F}_1,\pmb\tau_{t}\rangle
		,\label{2.29}\\
		&\chi_3 \bigl( \delta_{t}, \delta \bigr) +\bigl(\div\pmb\tau_{t}, \delta \bigr)+\lambda^{*}\chi_3(\div\pmb\tau_{tt},\delta)
		= \chi_1\bigl(\varpi_{t}, \delta \bigr) ,   \\
		&\qquad\bigl(\varpi_t, p \bigr)_{\rm dual}
		+\frac{1}{\theta_f} \bigl(K(\nab (\chi_1\delta +\chi_2\varpi+\lambda^{*}\chi_1\div\pmb\tau_{t}) -\rho_f\bg), \nab p \bigr) = (\phi, p)+\langle \phi_1,p\rangle.
	\end{eqnarray}
	
	Adding the resulting equations and integrating in
	$t$ over $ (0,t) $ for any $s\in(0, T]$, we get
	\begin{eqnarray}
		&&\dfrac{\gamma}{2}\left\| \varepsilon(\pmb\tau(s)) \right\|_{L^{2}(\Omega)}^{2}+\dfrac{\chi_2}{2}\left\|\varpi(s) \right\|_{L^{2}(\Omega)}^{2}+\dfrac{\chi_3}{2}\left\|\delta(s) \right\|_{L^{2}(\Omega)}^{2} \label{2.32}\\
		&&+ \frac{1}{\theta_f} \int_0^s \bigl(K (\nab p-\rho_f\bg), \nab p\bigr)\, dt
		-\int_0^s \bigl(\phi, p\bigr)\, dt
		-\int_0^s \langle \phi_1, p \rangle\, dt  \no\\
		&&-\bigl(\mathbf{F},\pmb\tau(s)\bigr) -\langle \mathbf{F}_1, \pmb\tau(s) \rangle+\int_{0}^{s}\left[ \lambda^{*}\chi_3(\div\pmb\tau_{tt},\delta)+\lambda^{*}\chi_1(\varpi_{t},\div\pmb\tau_{t})\right] \,dt\no\\
		&&=\dfrac{\gamma}{2}\left\|\varepsilon(\pmb\tau(0)) \right\|_{L^{2}(\Omega)}^{2}+\dfrac{\chi_2}{2}\left\|\varpi(0) \right\|_{L^{2}(\Omega)}^{2}+\dfrac{\chi_3}{2}\left\|\delta(0) \right\|_{L^{2}(\Omega)}^{2}\no\\
		&&-\bigl(\mathbf{F},\pmb\tau(0)\bigr) -\langle \mathbf{F}_1, \pmb\tau(0) \rangle .\no
	\end{eqnarray}
 Using the equality $ q_{t}=\chi_1\varpi_{t}-\chi_3\delta_{t}-\lambda^{*}\chi_3\div\pmb\tau_{tt} $, we have
 \begin{eqnarray}
 	&&\qquad\lambda^{*}\chi_3(\div\pmb\tau_{tt},\delta)+\lambda^{*}\chi_1(\varpi_{t},\div\pmb\tau_{t})\label{2.33}\\
 	&&=\lambda^{*}\chi_3\left[ \dfrac{d}{dt}(\div\pmb\tau_{t},\delta)-(\div\pmb\tau_{t},\delta_{t})\right]+\lambda^{*}\chi_1(\varpi_{t},\div\pmb\tau_{t}) \no\\
 	&&=\lambda^{*}\chi_3\dfrac{d}{dt}(\div\pmb\tau_{t},\delta)+\lambda^{*}(\div\pmb\tau_{t},\chi_1\varpi_{t}-\chi_3\delta_{t})\no\\
 	&&=\lambda^{*}\chi_3\dfrac{d}{dt}(\div\pmb\tau_{t},\delta)+\lambda^{*}(\div\pmb\tau_{t},\div\pmb\tau_{t}+\lambda^{*}\chi_3\div\pmb\tau_{tt})\no\\
 	&&=\lambda^{*}\chi_3\dfrac{d}{dt}(\div\pmb\tau_{t},\delta) +\lambda^{*}(\div\pmb\tau_{t},\div\pmb\tau_{t})+\dfrac{(\lambda^{*})^{2}\chi_3}{2}\dfrac{d}{dt}(\div\pmb\tau_{t},\div\pmb\tau_{t}).\no
 \end{eqnarray}
Using \reff{2.33} and \reff{2.32}, we get
	\begin{eqnarray}
	&&\qquad\dfrac{\gamma}{2}\left\| \varepsilon(\pmb\tau(s)) \right\|_{L^{2}(\Omega)}^{2}+\dfrac{\chi_2}{2}\left\|\varpi(s) \right\|_{L^{2}(\Omega)}^{2}+\dfrac{\chi_3}{2}\left\|\delta(s) \right\|_{L^{2}(\Omega)}^{2}+\lambda^{*}\chi_3(\div\pmb\tau_{t}(s),\delta(s)) \label{2.34}\\
	&&+ \frac{1}{\theta_f} \int_0^s \bigl(K (\nab p-\rho_f\bg), \nab p\bigr)\, dt
	-\int_0^s \bigl(\phi, p\bigr)\, dt
	-\int_0^s \langle \phi_1, p \rangle\, dt-2\bigl(\mathbf{F},\pmb\tau(s)\bigr)  \no\\
	&& -2\langle \mathbf{F}_1, \pmb\tau(s) \rangle+\int_{0}^{s}\lambda^{*}\left\| \div\pmb\tau_{t}\right\| _{L^{2}(\Omega)}^{2}\,dt+\dfrac{(\lambda^{*})^{2}\chi_3}{2}\left\| \div\pmb\tau_{t}(s)\right\| _{L^{2}(\Omega)}^{2}\no\\
	&&=\dfrac{\gamma}{2}\left\|\varepsilon(\pmb\tau(0)) \right\|_{L^{2}(\Omega)}^{2}+\dfrac{\chi_2}{2}\left\|\varpi(0) \right\|_{L^{2}(\Omega)}^{2}+\dfrac{\chi_3}{2}\left\|\delta(0) \right\|_{L^{2}(\Omega)}^{2}-2\bigl(\mathbf{F},\pmb\tau(0)\bigr)\no\\
	&& -2\langle \mathbf{F}_1, \pmb\tau(0)+\lambda^{*}\chi_3(\div\pmb\tau_{t}(0),\delta(0))+\dfrac{(\lambda^{*})^{2}\chi_3}{2}\left\| \div\pmb\tau_{t}(0)\right\| _{L^{2}(\Omega)}^{2} \rangle,\no
\end{eqnarray}
which implies that \reff{2.26} holds. The inequality \reff{2.28} follows immediately from the following inequality
	\begin{eqnarray}
		&&\quad\qquad(\varpi_{t},\varphi)=-\dfrac{1}{\theta_f}(K(\nabla (\chi_1\delta+\chi_2\varpi+\lambda^{*}q_{t})-\rho_{f}\bg),\nabla\varphi)+(\phi,\psi)+\left\langle\phi_{1},\psi \right\rangle\\
		&&\leq\dfrac{1}{\theta_f}\left\|K(\nabla (\chi_1\delta+\chi_2\varpi+\lambda^{*}q_{t})-\rho_{f}\bg) \right\|_{L^{2}(\Omega)}\left\|\nabla\psi \right\|_{L^{2}(\Omega)}+\left\|\phi \right\|_{L^{2}(\Omega)}\left\|\psi \right\|_{L^{2}(\Omega)}+\left\|\phi_{1} \right\|_{L^{2}(\partial\Omega)}\left\|\psi \right\|_{L^{2}(\partial\Omega)}\no\\
		&&\leq\dfrac{1}{\theta_f}\left\|K(\nabla (\chi_1\delta+\chi_2\varpi+\lambda^{*}q_{t})-\rho_{f}\bg) \right\|_{L^{2}(\Omega)}\left\|\nabla\psi \right\|_{L^{2}(\Omega)}+\left\|\phi \right\|_{L^{2}(\Omega)}\left\|\psi \right\|_{L^{2}(\Omega)}+\left\|\phi_{1} \right\|_{L^{2}(\partial\Omega)}\left\|\psi \right\|_{L^{2}(\Omega)}\no.       
	\end{eqnarray}
	and the definition of the $ H^{1}(\Omega)^{'} $-norm. The proof is complete.
\end{proof}

 Likewise, the weak solution of \reff{2.15}--\reff{2.17} satisfies a similar energy law which is a rewritten version of \reff{2.26}.
\begin{lemma}\label{lem2.1}
	Every weak solution $(\pmb\tau,p)$ of the problem \reff{2.15}--\reff{2.17} satisfies
	the following energy law:
	\begin{align}\label{2.23}
		&E(s)+\lambda^{*}\int_{0}^{s}\left\|(\div\pmb\tau)_{t} \right\|_{L^{2}(\Omega)}^{2}dt+\frac{1}{\theta_f} \int_0^s \bigl( K(\nab p-\rho_f\bg), \nab p\bigr)\, dt
		\\
		&\quad-\int_0^s \bigl(\phi, p\bigr)\, dt-\int_0^s \langle \phi_1, p \rangle\, dt =E(0)\no
	\end{align}
	for all $t\in [0,T]$,  where
	\begin{align}\label{2.24}
		E(s):&= \frac12 \left[\gamma\left\|\varepsilon(\pmb\tau(s)) \right\|_{L^{2}(\Omega)}^{2}+\lam \norm{\div \pmb\tau(s)}{L^2(\Ome)}^2\right.\\
		&\left.+a_0\norm{p(s)}{L^2(\Ome)}^2-2\bigl(\mathbf{F},\pmb\tau(s)\bigr) -2\langle \mathbf{F}_1, \pmb\tau(s) \rangle \right] .\no
	\end{align}
	Moreover, there holds
	\begin{align}\label{2.25}
		\norm{(a_0p+b_0 \div \pmb\tau)_t}{L^2(0.T;H^{1}(\Ome)')}
		&\leq\frac{1}{\theta_f} \norm{K\nab p-\rho_f \bg}{L^2(\Ome)}  \\
		&\qquad
		+ \|\phi\|_{L^2(\Ome)} + \|\phi_1\|_{L^2(\p\Ome)} < \infty. \no
	\end{align}
	
\end{lemma}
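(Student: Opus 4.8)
The plan is to run the standard energy argument, in exact parallel with the proof of Lemma~\ref{lem2.2}: test the two equations of the weak formulation \reff{2.15}--\reff{2.16} with the natural ``rate'' test functions, recognize each bilinear term as a time derivative of a quadratic quantity, and integrate in time. Following the proof of Lemma~\ref{lem2.2}, I would first treat the regular case in which $\pmb\tau_t$ is an admissible test function in \reff{2.15}, the general case being recovered by a Galerkin/density argument.

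Concretely, set $\bv=\pmb\tau_t$ in \reff{2.15} and $\varphi=p$ in \reff{2.16}. Using the product rules $\gamma\bigl(\varepsilon(\pmb\tau),\varepsilon(\pmb\tau_t)\bigr)=\tfrac{\gamma}{2}\tfrac{d}{dt}\|\varepsilon(\pmb\tau)\|_{L^2(\Ome)}^2$, $\lam(\div\pmb\tau,\div\pmb\tau_t)=\tfrac{\lam}{2}\tfrac{d}{dt}\|\div\pmb\tau\|_{L^2(\Ome)}^2$ and $\bigl((a_0p+b_0\div\pmb\tau)_t,p\bigr)_{\rm dual}=\tfrac{a_0}{2}\tfrac{d}{dt}\|p\|_{L^2(\Ome)}^2+b_0\bigl((\div\pmb\tau)_t,p\bigr)$ (the last legitimate since $p\in L^2(0,T;H^1(\Ome))$ and $p_t\in L^2(0,T;H^1(\Ome)')$), and noting that the coupling terms $-b_0(p,\div\pmb\tau_t)$ from \reff{2.15} and $+b_0((\div\pmb\tau)_t,p)$ from \reff{2.16} cancel exactly --- this cancellation is the crux of the computation --- the sum of the two tested equations collapses to
\begin{align*}
\frac{d}{dt}\,\widetilde E(\pmb\tau,p)&+\lambda^{*}\|(\div\pmb\tau)_t\|_{L^2(\Ome)}^2+\frac1{\theta_f}\bigl(K(\nab p-\rho_f\bg),\nab p\bigr)\\
&=(\mathbf{F},\pmb\tau_t)+\langle\mathbf{F}_1,\pmb\tau_t\rangle+(\phi,p)+\langle\phi_1,p\rangle,
\end{align*}
where $\widetilde E(\pmb\tau,p):=\tfrac12\bigl[\gamma\|\varepsilon(\pmb\tau)\|_{L^2(\Ome)}^2+\lam\|\div\pmb\tau\|_{L^2(\Ome)}^2+a_0\|p\|_{L^2(\Ome)}^2\bigr]$. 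Integrating over $(0,s)$, using that $\mathbf{F}$ and $\mathbf{F}_1$ are $t$-independent so that $\int_0^s(\mathbf{F},\pmb\tau_t)\,dt=(\mathbf{F},\pmb\tau(s))-(\mathbf{F},\pmb\tau(0))$ and likewise for $\mathbf{F}_1$, and absorbing these two terms into $\widetilde E$ to form $E(s)$ of \reff{2.24}, one obtains exactly \reff{2.23}. An equivalent route, in the same regular setting, is to read \reff{2.23} off the already-proved identity \reff{2.26} by substituting $q=\div\pmb\tau$, $\varpi=a_0p+b_0q$, $\delta=b_0p-\lam q-\lambda^{*}q_t$ and collapsing the quadratic form in $\varpi,\delta$ and $\div\pmb\tau_t$ back into the terms appearing in $E(s)$; both derivations give the same law.

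The bound \reff{2.25} follows just as \reff{2.28}: from \reff{2.16}, for every $\varphi\in H^1(\Ome)$,
\[
\bigl((a_0p+b_0\div\pmb\tau)_t,\varphi\bigr)_{\rm dual}=-\frac1{\theta_f}\bigl(K(\nab p-\rho_f\bg),\nab\varphi\bigr)+(\phi,\varphi)+\langle\phi_1,\varphi\rangle,
\]
so by the Cauchy--Schwarz inequality together with the trace inequality $\|\varphi\|_{L^2(\p\Ome)}\le C\|\varphi\|_{H^1(\Ome)}$, the right-hand side is bounded by $\bigl(\tfrac1{\theta_f}\|K\nab p-\rho_f\bg\|_{L^2(\Ome)}+\|\phi\|_{L^2(\Ome)}+C\|\phi_1\|_{L^2(\p\Ome)}\bigr)\|\varphi\|_{H^1(\Ome)}$; dividing by $\|\varphi\|_{H^1(\Ome)}$, taking the supremum over $\varphi$, and using the definition of the $H^1(\Ome)'$-norm gives \reff{2.25}, whose right-hand side is finite because $p\in L^2(0,T;H^1(\Ome))$ is part of Definition~\ref{weak1}.

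The only point requiring genuine care is the functional-analytic justification of using $\pmb\tau_t$ as a test function in \reff{2.15} and of the product rule for $\bigl((a_0p+b_0\div\pmb\tau)_t,p\bigr)_{\rm dual}$ at the regularity declared in Definition~\ref{weak1}, i.e.\ making the formal energy computation rigorous. This is precisely why one first isolates the case in which $\pmb\tau_t$ is admissible and then passes to the general case by smoothing the data (or through the finite-dimensional spaces used later for the discrete scheme). Everything else is the exact cancellation above together with routine manipulations, so no further obstacle is anticipated.
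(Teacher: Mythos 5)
Your proof is correct and follows essentially the same route as the paper: the paper states Lemma~\ref{lem2.1} without a separate proof, remarking only that it is "a rewritten version of \reff{2.26}," and your direct energy argument (testing with $\pmb\tau_t$ and $p$, exploiting the cancellation of the $b_0$ coupling terms, and integrating in time) is precisely the analogue of the paper's proof of Lemma~\ref{lem2.2}, which you also cite as the alternative derivation. The bound \reff{2.25} is obtained exactly as the paper obtains \reff{2.28}.
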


\begin{lemma}\label{lem2.3}
	Every weak solution $(\pmb\tau,\delta,\varpi)$ of the problem \reff{2.18}--\reff{2.22} satisfies
	the following inequality
	\begin{eqnarray}\label{2.35}
		&&\qquad\widehat{J}(s)+\lambda^{*}\int_{0}^{s}\left\|(\div\pmb\tau)_{t} \right\|_{L^{2}(\Omega)}^{2}dt+ \frac{1}{\theta_f} \int_0^s \bigl(K(\nab(\chi_1\delta+\chi_2\varpi+\lambda^{*}q_{t})-\rho_f\bg),
		\\
		&& \nab (\chi_1\delta+\chi_2\varpi+\lambda^{*}q_{t})\bigr)\, dt -\int_0^s \bigl(\phi, \chi_1\delta+\chi_2\varpi+\lambda^{*}q_{t})\, dt-\int_0^s \langle \phi_1, \chi_1\delta+\chi_2\varpi+\lambda^{*}q_{t} \rangle\, dt \leq\widehat{J}(0)\no
	\end{eqnarray}
	for all $t\in [0, T]$,  where
	\begin{eqnarray}\label{2.36}
		&&\widehat{J}(s):= \frac12 \Bigl[\gamma\left\|\varepsilon(\pmb\tau(s)) \right\|_{L^{2}(\Omega)}^{2}+\chi_2 \norm{\varpi(s)}{L^2(\Ome)}^2-2\bigl(\mathbf{F},\pmb\tau(s)\bigr) -2\langle \mathbf{F}_1, \pmb\tau(s) \rangle \Bigr],\\
		&&\widehat{J}(0):= \frac12 \Bigl[\gamma\left\|\varepsilon(\pmb\tau(0)) \right\|_{L^{2}(\Omega)}^{2}+\chi_2 \norm{\varpi(0)}{L^2(\Ome)}^2+\chi_3 \norm{\delta(0)}{L^2(\Ome)}^2 \label{2.37}\\
		&&+(\lambda^{*})^{2}\chi_3\left\| \div\pmb\tau_{t}(0)\right\| _{L^{2}(\Omega)}^{2}+\lambda^{*}\chi_3\left\|\div\pmb\tau_{t}(0) \right\|_{L^{2}(\Omega)}^{2}+\lambda^{*}\chi_3\left\|\delta(0) \right\|_{L^{2}(\Omega)}^{2}\no\\
		&&-2\bigl(\mathbf{F},\pmb\tau(0)\bigr) -2\langle \mathbf{F}_1, \pmb\tau(0) \rangle \Bigr].\no
	\end{eqnarray}
\end{lemma}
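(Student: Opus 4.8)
The plan is to deduce \reff{2.35} directly from the energy identity \reff{2.26} of Lemma~\ref{lem2.2}, by controlling the sign of the cross terms that distinguish $J$ from $\widehat{J}$. Note first that the integral contributions appearing in \reff{2.35}---the dissipation term $\lambda^{*}\int_0^s\norm{(\div\pmb\tau)_t}{L^2(\Ome)}^2\,dt$, the Darcy term with coefficient $\tfrac{1}{\theta_f}$, and the two source terms involving $\phi$ and $\phi_1$---are exactly the expressions occurring in \reff{2.26}. Writing $I(s)$ for their sum, the identity \reff{2.26} reads $J(s)+I(s)=J(0)$, so it suffices to establish the two pointwise comparisons
\[
\widehat{J}(s)\le J(s)\quad(0\le s\le T),\qquad J(0)\le\widehat{J}(0),
\]
and then chain them as $\widehat{J}(s)+I(s)\le J(s)+I(s)=J(0)\le\widehat{J}(0)$, which is precisely \reff{2.35}.

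First I would prove $\widehat{J}(s)\le J(s)$. Comparing \reff{2.27} with \reff{2.36}, this is equivalent to the nonnegativity of the quadratic form
\[
\chi_3\norm{\delta(s)}{L^2(\Ome)}^2+(\lambda^{*})^2\chi_3\norm{\div\pmb\tau_t(s)}{L^2(\Ome)}^2+\lambda^{*}\chi_3\bigl(\div\pmb\tau_t(s),\delta(s)\bigr).
\]
Since $a_0>0$ forces $\chi_3=\tfrac{a_0}{b_0^2+\lam a_0}>0$ and $\lambda^{*}\ge0$ by hypothesis, completing the square rewrites this quantity as $\chi_3\bigl\|\delta(s)+\tfrac{\lambda^{*}}{2}\div\pmb\tau_t(s)\bigr\|_{L^2(\Ome)}^2+\tfrac{3(\lambda^{*})^2\chi_3}{4}\norm{\div\pmb\tau_t(s)}{L^2(\Ome)}^2\ge0$, which gives the bound.

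Next I would prove $J(0)\le\widehat{J}(0)$. Subtracting \reff{2.27} evaluated at $s=0$ from \reff{2.37} leaves
\[
\widehat{J}(0)-J(0)=\tfrac{\lambda^{*}\chi_3}{2}\Bigl[\norm{\div\pmb\tau_t(0)}{L^2(\Ome)}^2+\norm{\delta(0)}{L^2(\Ome)}^2-\bigl(\div\pmb\tau_t(0),\delta(0)\bigr)\Bigr],
\]
and by the Cauchy--Schwarz and Young inequalities the bracket is bounded below by $\tfrac12\norm{\div\pmb\tau_t(0)}{L^2(\Ome)}^2+\tfrac12\norm{\delta(0)}{L^2(\Ome)}^2\ge0$, while $\lambda^{*}\chi_3\ge0$. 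Chaining the two comparisons with \reff{2.26} yields \reff{2.35}. There is no essential obstacle in this argument; the only point that must not be skipped is that both sign estimates rely precisely on the standing hypotheses $a_0>0$ and $\lambda^{*}\ge0$ (ensuring $\chi_3>0$ and $\lambda^{*}\chi_3\ge0$), together with the regularity $\pmb\tau_t\in L^2(0,T;L^2(\Ome))$ already used to justify \reff{2.26}.
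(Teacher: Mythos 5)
Your argument is correct and is exactly the (omitted) proof the paper intends: Lemma \ref{lem2.3} is stated without proof immediately after Lemma \ref{lem2.2}, and the form of $\widehat{J}(0)$ in \reff{2.37}, where the cross term $\lambda^{*}\chi_3(\div\pmb\tau_t(0),\delta(0))$ of \reff{2.27} is replaced by $\lambda^{*}\chi_3\|\div\pmb\tau_t(0)\|_{L^2(\Ome)}^2+\lambda^{*}\chi_3\|\delta(0)\|_{L^2(\Ome)}^2$, shows the authors had precisely your Young-inequality bound in mind. Your two pointwise comparisons $\widehat{J}(s)\le J(s)$ and $J(0)\le\widehat{J}(0)$, chained through the identity \reff{2.26}, are verified correctly, including the sign conditions $\chi_3>0$ and $\lambda^{*}\ge 0$.
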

\begin{lemma}\label{lem2.9}
	Every weak solution $(\pmb\tau,\delta,\varpi,p, q)$ to the problem \reff{2.18}-\reff{2.22} satisfies
	the following energy laws
	\begin{align}\label{2.62}
		&C_\varpi(t):=\bigl(\varpi(\cdot, t),1\bigr)
		=\bigl(\varpi_0,1\bigr) + \bigl[ (\phi,1) + \langle \phi_1, 1\rangle \bigr] t,\quad t\geq 0,\\
		&C_\delta(t):=\bigl( \delta(\cdot,t), 1\bigr)=\frac{\chi_1\gamma}{d+\chi_3\gamma}C_{\varpi}(t)-\frac{d\lambda^{*}\chi_3\chi_1\gamma}{(d+\chi_3\gamma)^{2}}\bigl[ (\phi,1) + \langle \phi_1, 1\rangle \bigr]\label{2.63}\\
		&~+\frac{1}{d+\chi_3\gamma}\left(-(\mathbf{F},\bx)-\left\langle\mathbf{F}_{1},\bx \right\rangle \right)\no,\\
		&C_q(t):=\bigl( q(\cdot,t), 1\bigr)
		=\chi_1 C_\varpi(t)-\chi_3 C_\delta(t), \label{2.64}\\
		&C_p(t):=\bigl( p(\cdot,t), 1\bigr)
		=\chi_1 C_\delta(t)+\chi_2 C_\varpi(t), \label{2.65}\\
		&C_{\pmb\tau}(t) := \bigl\langle \pmb\tau(\cdot,t)\cdot \bn, 1 \bigr\rangle  =C_q(t). \label{2.66}
	\end{align}
\end{lemma}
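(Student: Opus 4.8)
The plan is to test the weak formulation \reff{2.18}--\reff{2.20} against spatially constant or linear fields: this annihilates every divergence- and gradient-type term and reduces each equation to an ordinary differential identity for the corresponding spatial average. First I would establish \reff{2.62} by taking $\psi\equiv 1$ in \reff{2.20}; since $\nab 1=0$ the Darcy term drops out, leaving $(\varpi_t,1)_{\rm dual}=(\phi,1)+\langle\phi_1,1\rangle$, so integrating over $(0,t)$ and invoking the initial condition \reff{2.22} gives \reff{2.62}. In particular $C_\varpi$ is affine in $t$ with $C_\varpi'(t)\equiv(\phi,1)+\langle\phi_1,1\rangle$, which is used throughout. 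Next, choosing an arbitrary $\varphi\in L^2(\Ome)$ in \reff{2.19} yields the pointwise identity $\div\pmb\tau=\chi_1\varpi-\chi_3\delta-\lambda^{*}\chi_3\div\pmb\tau_t$, that is, $\div\pmb\tau=q$ by \reff{2.21}; hence the Gauss divergence theorem (valid since $\pmb\tau(\cdot,t)\in\bH^1(\Ome)$ on the polygonal domain $\Ome$) gives $C_q(t)=(\div\pmb\tau,1)=\langle\pmb\tau\cdot\bn,1\rangle=C_{\pmb\tau}(t)$, which is \reff{2.66}.

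For \reff{2.63} I would test \reff{2.18} with the admissible field $\bv=\bx\in\bH^1(\Ome)$. Using $\vepsi(\bx)=\mathbf{I}$, $\div\bx=d$ and $(\vepsi(\pmb\tau),\mathbf{I})=(\div\pmb\tau,1)=C_q(t)$, this produces the algebraic relation $\gamma\,C_q(t)-d\,C_\delta(t)=(\mathbf{F},\bx)+\langle\mathbf{F}_1,\bx\rangle$. To close the system, take $\varphi\equiv 1$ in \reff{2.19} and use $(\div\pmb\tau_t,1)=\frac{d}{dt}(\div\pmb\tau,1)=C_q'(t)$ to obtain $\chi_3 C_\delta(t)+C_q(t)+\lambda^{*}\chi_3 C_q'(t)=\chi_1 C_\varpi(t)$. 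Eliminating $C_q$ between these two identities gives the first-order linear ODE
\[
(d+\chi_3\gamma)\,C_\delta(t)+\lambda^{*}\chi_3 d\,C_\delta'(t)=\chi_1\gamma\,C_\varpi(t)-\bigl[(\mathbf{F},\bx)+\langle\mathbf{F}_1,\bx\rangle\bigr].
\]
Since $C_\varpi$ is affine in $t$, the unique affine particular solution of this ODE is precisely the formula claimed in \reff{2.63}; the initial value $C_\delta(0)$, fixed by evaluating the two averaged identities at $t=0$ together with the prescribed initial data, is exactly what makes the homogeneous mode $e^{-(d+\chi_3\gamma)t/(\lambda^{*}\chi_3 d)}$ absent, so \reff{2.63} holds (when $\lambda^{*}=0$ the ODE degenerates to that algebraic identity directly). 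Finally, \reff{2.64} and \reff{2.65} follow by taking $(\cdot,1)$ in the pointwise identities \reff{2.21} and \reff{2.4} and substituting the expressions already obtained for $C_\varpi$ and $C_\delta$.

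The main obstacle is the bookkeeping forced by the secondary-consolidation term $\lambda^{*}\div\pmb\tau_t$ when $\lambda^{*}>0$: it turns the otherwise purely algebraic mean identities into genuine ODEs, so one must (i) justify the interchange $\frac{d}{dt}(\div\pmb\tau,1)=(\div\pmb\tau_t,1)$, which is legitimate under the regularity of Definition~\ref{weak2} supplemented, as in Lemma~\ref{lem2.2}, by $\pmb\tau_t\in L^2(0,T;\bL^2(\Ome))$, and (ii) identify the correct initial averages so that the transient exponential mode does not appear. Everything else is elementary algebra using $\vepsi(\bx)=\mathbf{I}$, $\div\bx=d$, and the definitions of $\chi_1,\chi_2,\chi_3$.
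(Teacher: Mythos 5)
Your proof follows essentially the same route as the paper's: take $\psi\equiv 1$ in \reff{2.20} for \reff{2.62}, test \reff{2.18} with $\bv=\bx$ and \reff{2.19} with $\varphi=1$ to arrive at the same first-order ODE $(d+\chi_3\gamma)C_\delta+\lambda^{*}\chi_3 d\,C_\delta'=\chi_1\gamma C_\varpi-(\mathbf{F},\bx)-\langle\mathbf{F}_1,\bx\rangle$, and then read off \reff{2.64}--\reff{2.66} from \reff{2.21} and the divergence theorem. The only differences are cosmetic: you solve the ODE by matching an affine particular solution rather than by the paper's integrating factor, and you are in fact more explicit than the paper about why the homogeneous mode $e^{-(d+\chi_3\gamma)t/(\lambda^{*}\chi_3 d)}$ must be absent.
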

\begin{proof}
We first notice that \reff{2.62} follows immediately from taking $\psi\equiv 1$
	in \reff{2.20}. To prove \reff{2.63}, taking $\bv=\bx$ in \reff{2.18} and $\varphi=1$ in \reff{2.19}, which are
	valid test functions, and using the identities $\nabla \bx=\mathbf{I},~ \div \bx=d$, and $\varepsilon(\bx)=\mathbf{I}$, we get
	\begin{align*}
		\gamma\Bigl(\varepsilon(\pmb\tau), \mathbf{I}\Bigr)&=\gamma(\div\pmb\tau,1) = d\bigl( \delta, 1\bigr)+\bigl( \mathbf{F}, \bx\bigr)+ \langle \mathbf{F}_1,\bx\rangle,\\
		\bigl( \div \pmb\tau, 1\bigr) &=\chi_1(\varpi,1)-\chi_3(\delta, 1)-\lambda^{*}\chi_3(\div\pmb\tau_{t},1).
	\end{align*}
	It is easy to check that
	\begin{eqnarray}
		d\lambda^{*}\chi_3(\delta_{t},1)+(d+\chi_3\gamma)\bigl( \delta, 1\bigr)=\chi_1\gamma(\varpi,1)-(\mathbf{F},\bx)-\left\langle\mathbf{F}_{1},\bx \right\rangle. \no
	\end{eqnarray}
	According to ordinary differential equation theory, we get
\begin{eqnarray}
	( \delta, 1\bigr)&&=e^{-\int\frac{d+\chi_3\gamma}{d\lambda^{*}\chi_3}dt}\left(\int e^{\int\frac{d+\chi_3\gamma}{d\lambda^{*}\chi_3}dt}\left(\frac{\chi_1\gamma}{d\lambda^{*}\chi_3}(\varpi,1)-\frac{1}{d\lambda^{*}\chi_3}(\mathbf{F},\bx)-\frac{1}{d\lambda^{*}\chi_3}\left\langle\mathbf{F}_{1},\bx \right\rangle \right)dt+C  \right)\no\\
	&&=\frac{d\lambda^{*}\chi_3\chi_1\gamma}{(d+\chi_3\gamma)d\lambda^{*}\chi_3}(\varpi,1)-(\frac{d\lambda^{*}\chi_3}{d+\chi_3\gamma})^{2}\frac{\chi_1\gamma}{d\lambda^{*}\chi_3} \bigl[ (\phi,1) + \langle \phi_1, 1\rangle \bigr]\no\\
	&&~+e^{-\frac{d+\chi_3\gamma}{d\lambda^{*}\chi_3}t}\left(\int e^{\frac{d+\chi_3\gamma}{d\lambda^{*}\chi_3}t}\left(-\frac{1}{d\lambda^{*}\chi_3}(\mathbf{F},\bx)-\frac{1}{d\lambda^{*}\chi_3}\left\langle\mathbf{F}_{1},\bx \right\rangle \right)dt+C  \right),\no
\end{eqnarray} 
	which implies that \reff{2.63} holds.
	
	Finally, since $q=\chi_1\varpi-\chi_3\delta$,$p=\chi_1\delta + \chi_2 \varpi$, \reff{2.64} and \reff{2.65} follow from \reff{2.62}
	and \reff{2.63}. \reff{2.66} is an immediate consequence of $q=\div \pmb\tau$ and the result $ \bigl\langle \pmb\tau(\cdot,t)\cdot \bn, 1 \bigr\rangle=(\div\pmb\tau,1) $ of appealing to Gauss divergence theorem. The proof is complete.
\end{proof}

Using Lemma \ref{lem2.2}, Lemma \ref{lem2.1} and Lemma \ref{lem2.3}, we have the following solution estimates.
\begin{lemma}\label{estimates}
	There exists a positive constant
	$ \acute{C}_1=\acute{C}_1\bigl(\|\pmb\tau_0\|_{H^1(\Ome)}, \|p_0\|_{L^2(\Ome)},$
	$\|\mathbf{F}\|_{L^2(\Ome)},\|\mathbf{F}_1\|_{L^2(\p \Ome)},\|\phi\|_{L^2(\Ome)}, \|\phi_1\|_{L^2(\p\Ome)} \bigr)$ and $ \acute{C}_2=\acute{C}_2\bigl(\acute{C}_1,\|\nab p_0\|_{L^2(\Ome)}\bigr)$
	such that
	\begin{eqnarray}\label{2.41}
		&\sqrt{\lambda^{*}}\|(\div\pmb\tau)_{t}\|_{L^2(0,T;L^2(\Ome))}+\sqrt{\gamma}\|\varepsilon(\pmb\tau)\|_{L^\infty(0,T;L^2(\Ome))}+\sqrt{\chi_2} \|\varpi\|_{L^\infty(0,T;L^2(\Ome))} \\
		&
		+\sqrt{\chi_3} \|\lambda^{*}\div\pmb\tau_{t}+\delta\|_{L^\infty(0,T;L^2(\Ome))}
		+\sqrt{\frac{K_1}{\theta_f}} \|\nab p \|_{L^2(0,T;L^2(\Ome))} \leq \acute{C}_1, \no \\
		&\|\pmb\tau\|_{L^\infty(0,T;L^2(\Ome))}\leq \acute{C}_1, \quad
		\|p\|_{L^\infty(0,T;L^2(\Ome))} \leq \acute{C}_2 \bigl( \chi_2^{\frac12} + \chi_1 \chi_3^{-\frac12}
		\bigr), \label{2.42} \\
		&\|p\|_{L^2(0,T; L^2(\Ome))} \leq \acute{C}_1,~ \quad
		\|\delta\|_{L^2(0,T;L^2(\Ome))} \leq \acute{C}_2\chi_1^{-1} \bigl(1+ \chi_2^{\frac12} \bigr).
		\label{2.43}
	\end{eqnarray}
\end{lemma}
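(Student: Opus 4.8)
The plan is to read \reff{2.41} off the energy equality \reff{2.26} of Lemma~\ref{lem2.2} (valid in the regularity class $\pmb\tau_{t}\in L^2(0,T;L^2(\Ome))$ used there, the general case following by density), and then to obtain \reff{2.42}--\reff{2.43} as algebraic consequences of \reff{2.41} together with the identity $p=\chi_1(\delta+\lambda^{*}\div\pmb\tau_{t})+\chi_2\varpi$ from \reff{2.21}. Rearranging \reff{2.26} so that $J(s)$, the dissipation $\lambda^{*}\int_0^s\|(\div\pmb\tau)_{t}\|_{L^2(\Ome)}^2\,dt$ and the Darcy term $\tfrac1{\theta_f}\int_0^s(K(\nab p-\rho_f\bg),\nab p)\,dt$ are on the left and $J(0)+\int_0^s[(\phi,p)+\langle\phi_1,p\rangle]\,dt$ is on the right, I would first dispose of the structural terms: by the uniform positive definiteness of $K$ and Young's inequality $\tfrac1{\theta_f}(K(\nab p-\rho_f\bg),\nab p)\ge\tfrac{K_1}{2\theta_f}\|\nab p\|_{L^2(\Ome)}^2-C\|\rho_f\bg\|_{L^2(\Ome)}^2$, while rewriting the $\delta$--$\div\pmb\tau_{t}$ block of $J(s)$ in \reff{2.27} as $\tfrac{\chi_3}{2}\bigl(\|\delta\|_{L^2(\Ome)}^2+\|\lambda^{*}\div\pmb\tau_{t}\|_{L^2(\Ome)}^2+(\lambda^{*}\div\pmb\tau_{t},\delta)\bigr)$ and estimating the cross term by Cauchy--Schwarz shows this block dominates $\tfrac{\chi_3}{8}\|\lambda^{*}\div\pmb\tau_{t}+\delta\|_{L^2(\Ome)}^2$, which produces the $\sqrt{\chi_3}$-term in \reff{2.41}.

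The two delicate contributions are the indefinite body-force terms and the source integrals. For the former, since $\pmb\tau(s)\in\bH_\perp^1(\Ome)$ and $(\mathbf{F},\bv)+\langle\mathbf{F}_1,\bv\rangle=0$ for $\bv\in\bRM$, combining \reff{2.13} with Korn's inequality yields $\|\pmb\tau(s)\|_{H^1(\Ome)}\le C\|\varepsilon(\pmb\tau(s))\|_{L^2(\Ome)}$, so with a trace inequality $2|(\mathbf{F},\pmb\tau(s))|+2|\langle\mathbf{F}_1,\pmb\tau(s)\rangle|\le\tfrac{\gamma}{2}\|\varepsilon(\pmb\tau(s))\|_{L^2(\Ome)}^2+C(\|\mathbf{F}\|_{L^2(\Ome)}^2+\|\mathbf{F}_1\|_{L^2(\p\Ome)}^2)$ and these are absorbed into $\tfrac{\gamma}{2}\|\varepsilon(\pmb\tau(s))\|_{L^2(\Ome)}^2$ inside $J(s)$. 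For the source integrals, $p$ is not mean-free, so I would split $p=(p-\bar p)+\bar p$ with $\bar p=|\Ome|^{-1}C_p(t)$, use the Poincar\'e--Wirtinger inequality $\|p-\bar p\|_{L^2(\Ome)}\le C_P\|\nab p\|_{L^2(\Ome)}$ (and a trace inequality for $\langle\phi_1,p\rangle$), bound $\sup_{[0,T]}|C_p(t)|$ by data through the explicit formula \reff{2.65} of Lemma~\ref{lem2.9}, and then use H\"older in time and Young's inequality to absorb a small multiple of $\int_0^s\|\nab p\|_{L^2(\Ome)}^2\,dt$ into the left. Finally $J(0)$ is bounded by data: the inf-sup condition applied to \reff{2.18} at $t=0$ plus the mean formula \reff{2.63} control $\|\delta(0)\|_{L^2(\Ome)}$, and then $\lambda^{*}\div\pmb\tau_{t}(0)=b_0p_0-\beta\div\pmb\tau_0-\delta(0)$ is controlled as well; since the $\chi_3$-weighted pieces of $J(0)$ vanish as $a_0\to0$, the constant $\acute{C}_1$ can be taken uniform in $a_0$. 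Passing to the supremum over $s\in[0,T]$ gives \reff{2.41}.

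The remaining bounds are then routine. The estimate $\|\pmb\tau\|_{L^\infty(0,T;L^2(\Ome))}\le\acute{C}_1$ is Korn's inequality for the controlled quantity $\|\varepsilon(\pmb\tau)\|_{L^\infty(0,T;L^2(\Ome))}$; the bound $\|p(t)\|_{L^2(\Ome)}\le\chi_1\|\lambda^{*}\div\pmb\tau_{t}(t)+\delta(t)\|_{L^2(\Ome)}+\chi_2\|\varpi(t)\|_{L^2(\Ome)}$ combined with \reff{2.41} gives the $\chi_2^{1/2}+\chi_1\chi_3^{-1/2}$ estimate for $\|p\|_{L^\infty(0,T;L^2(\Ome))}$, while $\|p\|_{L^2(0,T;L^2(\Ome))}\le\acute{C}_1$ comes from Poincar\'e together with $\|\nab p\|_{L^2(0,T;L^2(\Ome))}\le\acute{C}_1(\theta_f/K_1)^{1/2}$ and $\|C_p\|_{L^2(0,T)}\le C$; and solving $\chi_1\delta=p-\chi_1\lambda^{*}\div\pmb\tau_{t}-\chi_2\varpi$ for $\delta$ and using the bounds on $\|p\|_{L^2(0,T;L^2(\Ome))}$, $\lambda^{*}\|\div\pmb\tau_{t}\|_{L^2(0,T;L^2(\Ome))}\le\sqrt{\lambda^{*}}\,\acute{C}_1$ and $\|\varpi\|_{L^2(0,T;L^2(\Ome))}\le T^{1/2}\acute{C}_1\chi_2^{-1/2}$ produces the bound on $\|\delta\|_{L^2(0,T;L^2(\Ome))}$. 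I expect the main obstacle to be the source-term absorption: it is the only place forcing use of the mean-value identities of Lemma~\ref{lem2.9}, and the Young step for $\int_0^s[(\phi,p)+\langle\phi_1,p\rangle]\,dt$ introduces a multiple of $\int_0^s\|\nab p\|_{L^2(\Ome)}^2\,dt$ that must be kept below $\tfrac{K_1}{2\theta_f}$, all the while keeping the constants independent of $a_0$.
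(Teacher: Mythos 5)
Your proposal is correct and follows essentially the same route as the paper: it starts from the energy identity of Lemma \ref{lem2.2}, recombines the $\delta$--$\div\pmb\tau_{t}$ block into a multiple of $\|\lambda^{*}\div\pmb\tau_{t}+\delta\|_{L^{2}(\Ome)}^{2}$ (the paper uses the exact perfect-square identity behind \reff{2.44}, where you settle for a Cauchy--Schwarz lower bound with constant $\chi_3/8$), and then reads \reff{2.42}--\reff{2.43} off \reff{2.41} via $p=\chi_1(\delta+\lambda^{*}\div\pmb\tau_{t})+\chi_2\varpi$, Korn/Poincar\'e and Lemma \ref{lem2.9}. The absorption details you supply for the body-force terms, the source terms and $J(0)$ are precisely the steps the paper's one-line passage from \reff{2.44} to \reff{2.45} leaves implicit, and they are sound.
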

\begin{proof}
	Using the identity
	\begin{eqnarray}
		&\dfrac{1}{2}\dfrac{d}{dt}(\delta,\delta)+\dfrac{d}{dt}(\lambda^{*}\div\pmb\tau_{t},\delta)+\dfrac{1}{2}\dfrac{d}{dt}(\lambda^{*}\div\pmb\tau_{t},\lambda^{*}\div\pmb\tau_{t})\no\\
		&=\dfrac{1}{2}\dfrac{d}{dt}(\lambda^{*}\div\pmb\tau_{t}+\delta,\lambda^{*}\div\pmb\tau_{t}+\delta),\no
	\end{eqnarray}
and \reff{2.34}, we get 
\begin{eqnarray}
	&&\qquad\dfrac{\gamma}{2}\left\| \varepsilon(\pmb\tau(s)) \right\|_{L^{2}(\Omega)}^{2}+\dfrac{\chi_2}{2}\left\|\varpi(s) \right\|_{L^{2}(\Omega)}^{2}+\dfrac{\chi_3}{2}\left\|\lambda^{*}\div\pmb\tau_{t}(s)+\delta(s) \right\|_{L^{2}(\Omega)}^{2} \label{2.44}\\
	&&+ \frac{1}{\theta_f} \int_0^s \bigl(K (\nab p-\rho_f\bg), \nab p\bigr)\, dt
	-\int_0^s \bigl(\phi, p\bigr)\, dt
	-\int_0^s \langle \phi_1, p \rangle\, dt-2\bigl(\mathbf{F},\pmb\tau(s)\bigr)  \no\\
	&& -2\langle \mathbf{F}_1, \pmb\tau(s) \rangle+\int_{0}^{s}\lambda^{*}\left\| \div\pmb\tau_{t}\right\| _{L^{2}(\Omega)}^{2}\,dt=\dfrac{\gamma}{2}\left\|\varepsilon(\pmb\tau(0)) \right\|_{L^{2}(\Omega)}^{2}\no\\
	&&+\dfrac{\chi_2}{2}\left\|\varpi(0) \right\|_{L^{2}(\Omega)}^{2}+\dfrac{\chi_3}{2}\left\|\lambda^{*}\div\pmb\tau_{t}(0)+\delta(0) \right\|_{L^{2}(\Omega)}^{2}-2\bigl(\mathbf{F},\pmb\tau(0)\bigr)-2\langle \mathbf{F}_1, \pmb\tau(0).\no
\end{eqnarray}
Using \reff{2.44}, we have
\begin{eqnarray}
	&&\sqrt{\lambda^{*}}\|(\div\pmb\tau)_{t}\|_{L^2(0,T;L^2(\Ome))}+\sqrt{\gamma}\|\varepsilon(\pmb\tau)\|_{L^\infty(0,T;L^2(\Ome))}+\sqrt{\chi_2} \|\varpi\|_{L^\infty(0,T;L^2(\Ome))}\label{2.45} \\
	&&+\sqrt{\chi_3} \|\lambda^{*}\div\pmb\tau_{t}+\delta\|_{L^\infty(0,T;L^2(\Ome))}
	+\sqrt{\frac{K_1}{\theta_f}} \|\nab p \|_{L^2(0,T;L^2(\Ome))}\no\\
	&&\leq C\left(\left\|\varepsilon(\pmb\tau(0)) \right\|_{L^{2}(\Omega)}+\left\|\varpi(0) \right\|_{L^{2}(\Omega)}+\left\|\div\pmb\tau_{t}(0)\right\|_{L^{2}(\Omega)} +\left\| \delta(0) \right\|_{L^{2}(\Omega)} \right. \no\\
	&&\left.+\|\mathbf{F}\|_{L^2(\Ome)}+\|\mathbf{F}_1\|_{L^2(\p \Ome)}+\|\phi\|_{L^2(\Ome)}+\|\phi_1\|_{L^2(\p\Ome)}\right),\no
\end{eqnarray}
which implies the \reff{2.41} holds.  It's easy to check that \reff{2.42} holds from \reff{2.41} and the relation $p=\chi_1\delta +\chi_2\varpi+\lambda^{*}q_{t}$. We note that \reff{2.43} follows from \reff{2.41}, \reff{2.13}, the Poincar$\acute{e}$ inequality, \reff{2.65} and the relation $p=\chi_1\delta +\chi_2\varpi+\lambda^{*}q_{t}$. The proof is complete.
\end{proof}
\begin{theorem}\label{smooth}
	Suppose that $\pmb\tau_0$ and $p_0$ are sufficiently smooth, then
	there exist positive constants~ $ \acute{C}_2=\acute{C}_2\bigl(\acute{C}_1,\|\nab p_0\|_{L^2(\Ome)}\bigr)$ and $ \acute{C}_3=\acute{C}_3\bigl(\acute{C}_1,\acute{C}_2,\|\pmb\tau_0\|_{H^2(\Ome)},
	\|p_0\|_{H^2(\Ome)} \bigr)$ such that
	\begin{align}\label{2.46}
		&\sqrt{\lambda^{*}}\left\|(\div\pmb\tau)_{t} \right\|_{L^2(0,T;L^2(\Ome))}+ \sqrt{\gamma}\|\varepsilon(\pmb\tau_{t})\|_{L^2(0,T;L^2(\Ome))}
		+\sqrt{\chi_2} \|\varpi_t\|_{L^2(0,T;L^2(\Ome))} \\
		&\qquad
		+\sqrt{\chi_3} \|\delta_t\|_{L^2(0,T;L^2(\Ome))}
		+\sqrt{\frac{K_1}{\theta_f}} \|\nab p \|_{L^\infty(0,T;L^2(\Ome))}\no\\
		&+\sqrt{\gamma\lambda^{*}\chi_3}\|\varepsilon(\pmb\tau_{t})\|_{L^\infty(0,T;L^2(\Ome))}+\lambda^{*}\sqrt{\chi_3}\|\div\pmb\tau_{t}\|_{L^\infty(0,T;L^2(\Ome))} \leq \acute{C}_2, \no \\
		&\sqrt{\lambda^{*}}\left\|(\div\pmb\tau)_{tt} \right\|_{L^2(0,T;L^2(\Ome))}+\sqrt{\gamma}\|\varepsilon(\pmb\tau_{t})\|_{L^\infty(0,T;L^2(\Ome))}
		\label{2.47} \\
		&\quad
		+\sqrt{\chi_2} \|\varpi_t\|_{L^\infty(0,T;L^2(\Ome))}+\sqrt{\chi_3} \|\lambda^{*}\div\pmb\tau_{tt}+\delta_t\|_{L^\infty(0,T;L^2(\Ome))}\no\\
		&\quad+\sqrt{\frac{K_1}{\theta_f}} \|\nab p_{t} \|_{L^2(0,T;L^2(\Ome))} \leq \acute{C}_3, \no \\
		&\|\varpi_{tt}\|_{L^2(H^{1}(\Ome)')} \leq \sqrt{\frac{K_2}{\theta_f}}\acute{C}_3. \label{2.48}
	\end{align}
\end{theorem}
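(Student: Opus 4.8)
The plan is to obtain \reff{2.46}--\reff{2.48} by running the energy computations of Lemma \ref{lem2.2} and Lemma \ref{estimates} on the time-differentiated weak formulation, and then recovering the $\nab p$-type bounds by testing the diffusion equation with $p_t$. Since $\mathbf{F},\mathbf{F}_1,\phi,\phi_1$ and $\bg$ are independent of $t$, differentiating \reff{2.18}--\reff{2.20} in $t$ gives, for a.e.\ $t\in[0,T]$ and all $\bv\in\bH^1(\Ome),\varphi\in L^2(\Ome),\psi\in H^1(\Ome)$,
\begin{align*}
	&\gamma\bigl(\varepsilon(\pmb\tau_t),\vepsi(\bv)\bigr)-\bigl(\delta_t,\div\bv\bigr)=0,\\
	&\chi_3(\delta_t,\varphi)+(\div\pmb\tau_t,\varphi)+\lambda^{*}\chi_3(\div\pmb\tau_{tt},\varphi)=\chi_1(\varpi_t,\varphi),\\
	&\bigl(\varpi_{tt},\psi\bigr)_{\rm dual}+\tfrac{1}{\theta_f}\bigl(K\nab p_t,\nab\psi\bigr)=0,
\end{align*}
that is, a system of the same algebraic structure as \reff{2.18}--\reff{2.20} but with vanishing right-hand sides and with the $5$-tuple replaced by its $t$-derivative; differentiating once more produces the analogous homogeneous system for $(\pmb\tau_{tt},\delta_{tt},\varpi_{tt})$.

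First I would bound the initial traces of the time derivatives. Evaluating \reff{2.18}--\reff{2.20} at $t=0$ and invoking the assumed smoothness of $\pmb\tau_0,p_0$ (which makes $\div\vepsi(\pmb\tau_0)$, $\nab\!\cdot\!(K\nab p_0)$ meaningful in $L^2(\Ome)$ and supplies compatibility with the boundary conditions), one solves successively for $\div\pmb\tau_t(0)$, $\delta(0)$, $\nab p(0)$, and then for $\varpi_t(0)$, $\delta_t(0)$, $\div\pmb\tau_{tt}(0)$, $\nab p_t(0)$, obtaining bounds $\lesssim\acute C_1+\|\pmb\tau_0\|_{H^2(\Ome)}+\|p_0\|_{H^2(\Ome)}$; this step fixes the dependence of $\acute C_2,\acute C_3$. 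Next, for \reff{2.47} I would repeat verbatim the argument behind \reff{2.32}--\reff{2.44} on the once-differentiated system: test the momentum equation with $\pmb\tau_{tt}$, the $t$-derivative of the constitutive equation with $\delta_t$, and the diffusion equation with $p_t=\chi_1\delta_t+\chi_2\varpi_t+\lambda^{*}\chi_1\div\pmb\tau_{tt}$; the cross-terms $\lambda^{*}\chi_3(\div\pmb\tau_{ttt},\delta_t)+\lambda^{*}\chi_1(\varpi_{tt},\div\pmb\tau_{tt})$ are rearranged exactly as in \reff{2.33} and combined with the completing-the-square identity $\tfrac12\tfrac{d}{dt}\|\lambda^{*}\div\pmb\tau_{tt}+\delta_t\|_{L^2(\Ome)}^2=\cdots$ used in the proof of Lemma \ref{estimates}, which yields the dissipation $\lambda^{*}\|(\div\pmb\tau)_{tt}\|_{L^2(\Ome)}^2$, the $L^\infty$ quantities $\|\varepsilon(\pmb\tau_t)\|_{L^\infty(L^2)}$, $\|\varpi_t\|_{L^\infty(L^2)}$, $\|\lambda^{*}\div\pmb\tau_{tt}+\delta_t\|_{L^\infty(L^2)}$, and $\int_0^s\tfrac1{\theta_f}(K\nab p_t,\nab p_t)\,dt$; the remaining terms are absorbed by Cauchy--Schwarz and Young's inequality using $q_t=\chi_1\varpi_t-\chi_3\delta_t-\lambda^{*}\chi_3\div\pmb\tau_{tt}$, \reff{2.13}, the Poincar\'e inequality, Lemma \ref{estimates}, the initial-trace bounds, and (for the lower-order pieces) Gronwall's lemma. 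This establishes \reff{2.47}.

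Then \reff{2.46} follows by assembling Lemma \ref{estimates} (for $\|(\div\pmb\tau)_t\|_{L^2(L^2)}$), the first-order dissipation just obtained (for $\|\varepsilon(\pmb\tau_t)\|_{L^2(L^2)}$, $\|\varpi_t\|_{L^2(L^2)}$, $\|\delta_t\|_{L^2(L^2)}$ and the weighted $L^\infty(L^2)$ bounds on $\varepsilon(\pmb\tau_t)$ and $\div\pmb\tau_t$), plus a separate test of the \emph{undifferentiated} equation \reff{2.20} with $\psi=p_t$: write $\tfrac1{\theta_f}(K\nab p,\nab p_t)=\tfrac1{2\theta_f}\tfrac{d}{dt}(K\nab p,\nab p)$, integrate in $t$, use the time-independence of the data through $\int_0^s(\phi,p_t)\,dt=(\phi,p(s))-(\phi,p(0))$ (and likewise for the $\phi_1$ and $\rho_f\bg$ terms, the last absorbed via Young into $\|\nab p(s)\|^2$), bound $(\varpi_t,p_t)_{\rm dual}$ by $\|\varpi_t\|_{L^2}\|p_t\|_{L^2}$ using the $L^2(L^2)$ bound on $\varpi_t$ and the $L^\infty(L^2)$ bound on $p$ from \reff{2.42}, to get $\|\nab p\|_{L^\infty(L^2)}\lesssim\acute C_2$. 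Finally, \reff{2.48} is immediate from the once-differentiated diffusion equation: for every $\psi\in H^1(\Ome)$, $\bigl(\varpi_{tt},\psi\bigr)_{\rm dual}=-\tfrac1{\theta_f}(K\nab p_t,\nab\psi)\le\sqrt{K_2/\theta_f}\,\|\nab p_t\|_{L^2(\Ome)}\|\psi\|_{H^1(\Ome)}$, so the definition of the $H^1(\Ome)'$-norm and \reff{2.47} give $\|\varpi_{tt}\|_{L^2(H^1(\Ome)')}\le\sqrt{K_2/\theta_f}\,\acute C_3$, the exact analogue of \reff{2.28}. The main obstacle is the initial-trace step — one must verify that the unquantified ``smoothness'' of $\pmb\tau_0,p_0$ really controls $\delta_t(0)$ and $\div\pmb\tau_{tt}(0)$ in $L^2(\Ome)$, which forces at least $H^2$-regularity together with data compatibility — and, secondarily, one must carry the weights $\chi_1,\chi_2,\chi_3$ through every Young inequality so that the constants are explicit and the estimates degrade gracefully as $a_0\to0^+$.
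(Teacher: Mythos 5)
Your treatment of \reff{2.47} and \reff{2.48} follows the paper's route almost exactly: differentiate the weak formulation in time (using the time-independence of the data), test the once-differentiated momentum equation with $\pmb\tau_{tt}$, the twice-differentiated constitutive equation with $\delta_t$, and the once-differentiated diffusion equation with $p_t$, convert the cross terms $\lambda^{*}\chi_3(\div\pmb\tau_{ttt},\delta_t)+\lambda^{*}\chi_1(\varpi_{tt},\div\pmb\tau_{tt})$ into total derivatives via $q_{tt}=\chi_1\varpi_{tt}-\chi_3\delta_{tt}-\lambda^{*}\chi_3\div\pmb\tau_{ttt}$ together with the identity \reff{2.53}, complete the square on $\lambda^{*}\div\pmb\tau_{tt}+\delta_t$, and finish \reff{2.48} by duality---this is precisely \reff{2.56}--\reff{2.61} and the closing display of the paper's proof. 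Your insistence on quantifying the initial traces $\delta_t(0)$, $\div\pmb\tau_{tt}(0)$, $\nab p(0)$ addresses a point the paper leaves implicit, and it is indeed where the $H^2$-dependence of $\acute{C}_3$ enters.

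The weak spot is your plan for \reff{2.46}. The paper does not extract the $L^2(L^2)$ bounds on $\varepsilon(\pmb\tau_t),\varpi_t,\delta_t$ from the second-order estimate, nor does it bound $(\varpi_t,p_t)_{\rm dual}$ by Cauchy--Schwarz in a ``separate test''. It runs a dedicated intermediate energy identity (\reff{2.52}--\reff{2.55}): the once-differentiated momentum equation tested with $\pmb\tau_t$, the once-differentiated constitutive equation tested with $\delta_t$, and the \emph{undifferentiated} diffusion equation tested with $p_t$, all added, so that $(\varpi_t,p_t)$ is absorbed exactly into the dissipation $\gamma\|\varepsilon(\pmb\tau_t)\|^2+\chi_2\|\varpi_t\|^2+\chi_3\|\delta_t\|^2+\lambda^{*}\|\div\pmb\tau_t\|^2$ after the cross-term manipulation \reff{2.54}. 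Your decoupled version does not close at first order: $p_t$ contains the term $\lambda^{*}\chi_1\div\pmb\tau_{tt}$, so $\|p_t\|_{L^2(L^2)}$ is a second-order quantity, and estimating $(\varpi_t,p_t)$ by Cauchy--Schwarz (or harvesting the $L^2(L^2)$ norms of $\varepsilon(\pmb\tau_t),\varpi_t,\delta_t$ from the $L^\infty$ bounds in \reff{2.47}) makes the constant in \reff{2.46} depend on $\|\pmb\tau_0\|_{H^2(\Ome)}$ and $\|p_0\|_{H^2(\Ome)}$, contradicting the claimed dependence $\acute{C}_2=\acute{C}_2\bigl(\acute{C}_1,\|\nab p_0\|_{L^2(\Ome)}\bigr)$. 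Proving \reff{2.46} first, by the coupled first-order identity, repairs both the estimate and the ordering of the constants.
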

\begin{proof}
	To show \reff{2.46}, first differentiating \reff{2.18} one time with respect to $t$ and setting
	$\bv=\pmb\tau_{t}$, differentiating \reff{2.19} one time with respect to $t$ and setting
	$\varphi=\delta_t$, taking $ \psi=p_{t}=\chi_1\delta_{t}+\chi_2\varpi_{t}+\lambda^{*}\chi_1\div\pmb\tau_{t} $ in \reff{2.20}, we get
	\begin{eqnarray}
	&\gamma\left\|\varepsilon(\pmb\tau_{t}) \right\|_{L^{2}(\Omega)}^{2}-\bigl(\div\pmb\tau_{t},\delta_t \bigr)
	=0,\\
	&\chi_3\|\delta_t\|_{L^2(\Ome)}^2+(\div\pmb\tau_{t},\delta_{t})+\lambda^{*}\chi_3(\div\pmb\tau_{tt},\delta_{t})=\chi_1\bigl(\varpi_{t},\delta_t\bigr),\\
	&\qquad\chi_1 \bigl(\varpi_{t},\delta_t \bigr) + \chi_2 \|\varpi_t\|_{L^2(\Ome)}^2
	+\lambda^{*}\chi_1(\varpi_{t},\div\pmb\tau_{tt})+\frac{1}{\theta_f}(K(\nab p-\rho_{f}\bg),\nab p_{t})=\dfrac{d}{dt}\left[ (\phi,p)-\left\langle\phi_{1},p \right\rangle \right].
	\end{eqnarray}
	Adding the above equations and integrating in $ t $ from $ 0 $ to $ s $, we get
	\begin{align}\label{2.52}
		&\int_{0}^{s}\gamma\left\|\varepsilon(\pmb\tau_{t}) \right\|_{L^{2}(\Omega)}^{2}+\chi_2 \|\varpi_t\|_{L^2(\Ome)}^2+\chi_3 \|\delta_t\|_{L^2(\Ome)}^2+\lambda^{*}\chi_3(\div\pmb\tau_{tt},\delta_{t})\,dt \\
		&+ \frac{K}{2\theta_f}\|\nab p(s)\|_{L^2(\Ome)}^2+\int_{0}^{s}\lambda^{*}\chi_1(\varpi_{t},\div\pmb\tau_{tt})\,dt\no\\
		&=\frac{K}{2\theta_f}\|\nab p(0)\|_{L^2(\Ome)}^2+(\phi,p(s)-p(0))+\left\langle\phi_{1},p(s)-p(0) \right\rangle.\no
	\end{align}
Differentiating \reff{2.18} one time with respect to $t$ and setting
$\bv=\pmb\tau_{tt}$, we get
\begin{eqnarray}
	\dfrac{\gamma}{2}\dfrac{d}{dt}\left\| \varepsilon(\pmb\tau_{t})\right\|_{L^{2}(\Omega)}^{2}-(\delta_{t},\div\pmb\tau_{tt})=0.\label{2.53}
\end{eqnarray}
Using the equality $ q_{t}=\chi_1\varpi_{t}-\chi_3\delta_{t}-\lambda^{*}\chi_3\div\pmb\tau_{tt} $ and \reff{2.53}, we have
\begin{eqnarray}
	&&\lambda^{*}\chi_3(\div\pmb\tau_{tt},\delta_{t})+\lambda^{*}\chi_1(\varpi_{t},\div\pmb\tau_{tt})\label{2.54}\\
	&&=\lambda^{*}\chi_3(\div\pmb\tau_{tt},\delta_{t})+\lambda^{*}(\div\pmb\tau_{tt},\chi_3\delta_{t}+\div\pmb\tau_{t}+\lambda^{*}\chi_3\div\pmb\tau_{tt})\no\\
	&&=2\chi_3\lambda^{*}(\div\pmb\tau_{tt},\delta_{t})+\lambda^{*}(\div\pmb\tau_{t},\div\pmb\tau_{t})+(\lambda^{*})^{2}\chi_3(\div\pmb\tau_{t},\div\pmb\tau_{tt})\no\\
	&&=\gamma\lambda^{*}\chi_3\dfrac{d}{dt}\left\| \varepsilon(\pmb\tau_{t})\right\|_{L^{2}(\Omega)}^{2} +\lambda^{*}\left\|\div\pmb\tau_{t} \right\|_{L^{2}(\Omega)}^{2}+\frac{(\lambda^{*})^{2}\chi_3}{2} \dfrac{d}{dt}\left\|\div\pmb\tau_{t} \right\|_{L^{2}(\Omega)}^{2}.\no
\end{eqnarray}
Taking \reff{2.54} in \reff{2.52}, we obtain
\begin{align}\label{2.55}
	&\int_{0}^{s}\gamma\left\|\varepsilon(\pmb\tau_{t}) \right\|_{L^{2}(\Omega)}^{2}+\chi_2 \|\varpi_t\|_{L^2(\Ome)}^2+\chi_3 \|\delta_t\|_{L^2(\Ome)}^2+\lambda^{*}\left\|\div\pmb\tau_{t} \right\|_{L^{2}(\Omega)}^{2}\,dt \\
	&+\gamma\lambda^{*}\chi_3\left\| \varepsilon(\pmb\tau_{t}(s))\right\|_{L^{2}(\Omega)}^{2}+\frac{(\lambda^{*})^{2}\chi_3}{2} \left\|\div\pmb\tau_{t}(s) \right\|_{L^{2}(\Omega)}^{2}+ \frac{K}{2\theta_f}\|\nab p(s)\|_{L^2(\Ome)}^2\no\\
	&=\gamma\lambda^{*}\chi_3\left\| \varepsilon(\pmb\tau_{t}(0))\right\|_{L^{2}(\Omega)}^{2}+\frac{(\lambda^{*})^{2}\chi_3}{2}\left\|\div\pmb\tau_{t}(0) \right\|_{L^{2}(\Omega)}^{2}+\frac{K}{2\theta_f}\|\nab p(0)\|_{L^2(\Ome)}^2\no\\
	&+(\phi,p(s)-p(0))+\left\langle\phi_{1},p(s)-p(0) \right\rangle,\no
\end{align}
which implies \reff{2.46} holds. Differentiating \reff{2.19} twice with respect to $t$ and setting
$\varphi=\delta_t$. Differentiating \reff{2.20} one time with respect to $t$  and setting $\psi=p_{t}=\chi_1\delta_{t} + \chi_2\varpi_{t}$ in \reff{2.20}, we get 
\begin{eqnarray}\label{2.56}
	&\dfrac{\chi_3}{2}\dfrac{d}{dt}\|\delta_t\|_{L^2(\Ome)}^2+(\div\pmb\tau_{tt},\delta_{t})+\lambda^{*}\chi_3(\div\pmb\tau_{ttt},\delta_{t})=\chi_1\bigl(\varpi_{tt},\delta_t\bigr),\\
	&\qquad\chi_1 \bigl(\varpi_{tt},\delta_{t} \bigr)+\lambda^{*}\chi_1(\varpi_{tt},\div\pmb\tau_{tt}) + \dfrac{\chi_2}{2}\frac{d}{dt}\|\varpi_t\|_{L^2(\Ome)}^2
	+\frac{K}{\theta_f} \|\nab p_{t}\|_{L^2(\Ome)}^2
	=0.\label{2.57}
\end{eqnarray}
Adding \reff{2.53}, \reff{2.56} and \reff{2.57}, and integrating in $t$ we get for $s\in[0,T]$, we have
\begin{eqnarray}\label{2.58}
	&& \qquad\dfrac{\gamma}{2}\left\|\varepsilon(\pmb\tau_{t})(s) \right\|_{L^2(\Ome)}^2+ \dfrac{\chi_2}{2}\|\varpi_t(s)\|_{L^2(\Ome)}^2 + \dfrac{\chi_3}{2}\|\delta_t(s)\|_{L^2(\Ome)}^2 +\frac{K}{\theta_f}\int_0^s  \|\nab p_{t}\|_{L^2(\Ome)}^2\,dt\\
	&&+\lambda^{*}\chi_3(\div\pmb\tau_{ttt},\delta_{t})+\lambda^{*}\chi_1(\varpi_{tt},\div\pmb\tau_{tt})=\dfrac{\gamma}{2}\left\|\varepsilon(\pmb\tau_{t})(0) \right\|_{L^2(\Ome)}^2+ \dfrac{\chi_2}{2}\|\varpi_t(0)\|_{L^2(\Ome)}^2 + \dfrac{\chi_3}{2}\|\delta_t(0)\|_{L^2(\Ome)}^2. \no
\end{eqnarray}
Using the equality $ q_{tt}=\chi_1\varpi_{tt}-\chi_3\delta_{tt}-\lambda^{*}\chi_3\div q_{ttt} $, we have
\begin{eqnarray}
	&&\qquad\lambda^{*}\chi_3(\div\pmb\tau_{ttt},\delta_{t})+\lambda^{*}\chi_1(\varpi_{tt},\div\pmb\tau_{tt})\label{2.59}\\
	&&=\lambda^{*}\chi_3\left[ \dfrac{d}{dt}(\div\pmb\tau_{tt},\delta_{t})-(\div\pmb\tau_{tt},\delta_{tt})\right]+\lambda^{*}\chi_1(\varpi_{tt},\div\pmb\tau_{tt}) \no\\
	&&=\lambda^{*}\chi_3\dfrac{d}{dt}(\div\pmb\tau_{tt},\delta_{t})+\lambda^{*}(\div\pmb\tau_{tt},\chi_1\varpi_{tt}-\chi_3\delta_{tt})\no\\
	&&=\lambda^{*}\chi_3\dfrac{d}{dt}(\div\pmb\tau_{tt},\delta_{t})+\lambda^{*}(\div\pmb\tau_{tt},\div\pmb\tau_{tt})+\lambda^{*}\chi_3\div\pmb\tau_{ttt})\no\\
	&&=\lambda^{*}\chi_3\dfrac{d}{dt}(\div\pmb\tau_{tt},\delta_{t}) +\lambda^{*}(\div\pmb\tau_{tt},\div\pmb\tau_{tt})+\dfrac{(\lambda^{*})^{2}\chi_3}{2}\dfrac{d}{dt}(\div\pmb\tau_{tt},\div\pmb\tau_{tt}).\no
\end{eqnarray}
	Taking the identity
\begin{eqnarray}
	&\dfrac{1}{2}\dfrac{d}{dt}(\delta_{t},\delta_{t})+\dfrac{d}{dt}(\lambda^{*}\div\pmb\tau_{tt},\delta_{t})+\dfrac{1}{2}\dfrac{d}{dt}(\lambda^{*}\div\pmb\tau_{tt},\lambda^{*}\div\pmb\tau_{tt})\label{2.60}\\
	&=\dfrac{1}{2}\dfrac{d}{dt}(\lambda^{*}\div\pmb\tau_{tt}+\delta_{t},\lambda^{*}\div\pmb\tau_{tt}+\delta_{t}),\no
\end{eqnarray}
and \reff{2.59} in \reff{2.58}, we get
\begin{align}\label{2.61}
	&\int_0^s \lambda^{*}\left\|(\div\pmb\tau)_{tt} \right\|_{L^2(\Ome)}^2\,dt+ \dfrac{\gamma}{2}\left\|\varepsilon(\pmb\tau_{t})(s) \right\|_{L^2(\Ome)}^2\\
	&+ \dfrac{\chi_2}{2}\|\varpi_t(s)\|_{L^2(\Ome)}^2 + \dfrac{\chi_3}{2}\|\lambda^{*}\div\pmb\tau_{tt}(s)+\delta_t(s)\|_{L^2(\Ome)}^2 +\frac{K}{\theta_f}\int_0^s  \|\nab p_{t}\|_{L^2(\Ome)}^2\,dt\no\\
	&=\dfrac{\gamma}{2}\left\|\varepsilon(\pmb\tau_{t})(0) \right\|_{L^2(\Ome)}^2+ \dfrac{\chi_2}{2}\|\varpi_t(0)\|_{L^2(\Ome)}^2 + \dfrac{\chi_3}{2}\|\lambda^{*}\div\pmb\tau_{tt}(0)+\delta_t(0)\|_{L^2(\Ome)}^2, \no
\end{align}
which implies that \reff{2.47} holds. 
\reff{2.48} follows immediately from the following inequality
\begin{align*}
	\bigl( \varpi_{tt}, \psi \bigr) = -\frac{1}{\theta_f} \bigl( K\nab p_{t}, \nab \psi\bigr)
	\leq \frac{K_{2}}{\theta_f} \|\nab p_{t}\|_{L^2(\Ome)} \|\nab \psi\|_{L^2(\Ome)},
\end{align*}
\reff{2.47} and the definition of the $H^{1}(\Omega)'$-norm. The proof is complete.
\end{proof}

\begin{theorem}\label{thm2.8}
	Let $\pmb\tau_0\in\bH^1(\Ome), \mathbf{F}\in\bL^2(\Omega),
	\mathbf{F}_1\in \bL^2(\p\Ome), p_0\in L^2(\Ome), \phi\in L^2(\Ome)$, and $\phi_1\in L^2(\p\Ome)$. Suppose $a_0>0$ and $(\mathbf{F},\bv)+\langle \mathbf{F}_1, \bv \rangle =0$ for any $\bv\in \mathbf{RM}$. Then there exists a unique weak solution to the problem \reff{1.1}-\reff{1.4} in the sense of Definition \ref{weak1}. Likewise, there exists a unique weak solution to the problem
	\reff{2.5}-\reff{2.10} in the sense of Definition \ref{weak2}.
\end{theorem}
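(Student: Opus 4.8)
The plan is to reduce the theorem to a single well-posedness statement for the reformulated system \reff{2.18}--\reff{2.22} and then to prove that statement by a Faedo--Galerkin argument driven by the a priori bounds already established. The first step is the equivalence of the two weak formulations: $(\pmb\tau,p)$ solves \reff{1.1}--\reff{1.4} in the sense of Definition \ref{weak1} if and only if the $5$-tuple $(\pmb\tau,\delta,\varpi,p,q)$ obtained from $q:=\div\pmb\tau$, $\varpi:=a_0p+b_0q$, $\delta:=b_0p-\lam q-\lambda^*q_t$ solves \reff{2.5}--\reff{2.10} in the sense of Definition \ref{weak2}. The subtle direction is the forward one: although the defining formula only gives $\delta\in L^2(0,T;H^1(\Ome)')$ directly, equation \reff{2.18} characterises $\delta$ as the ``pressure'' of a generalized Stokes problem, so the inf--sup condition \reff{2.14} together with $\pmb\tau\in L^\infty(0,T;\bH^1_\perp(\Ome))$ upgrades it to $\delta\in L^\infty(0,T;L^2(\Ome))$; the identities \reff{2.19}--\reff{2.22} then follow from \reff{2.15}--\reff{2.17} by inserting the algebraic relations \reff{2.4}. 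The converse is a direct computation using \reff{2.21}, the consistency of the initial data being built into the substitution. Hence it suffices to establish existence and uniqueness for \reff{2.18}--\reff{2.22}.

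For existence I would use the Galerkin method. Choose increasing, dense, conforming subspaces $\bV_n\subset\bH^1_\perp(\Ome)$, $Q_n\subset L^2(\Ome)$, $W_n\subset H^1(\Ome)$ with $(\bV_n,Q_n)$ satisfying the discrete analogue of \reff{2.14}, and seek $(\pmb\tau_n,\delta_n,\varpi_n)$ solving the Galerkin projections of \reff{2.18}--\reff{2.20}, the discrete initial values being fixed consistently with $\pmb\tau_0$ and $\varpi_0$. The discrete \reff{2.18} expresses $\pmb\tau_n$ affinely through $\delta_n$; inserting this into the discrete \reff{2.19}--\reff{2.20} leaves a \emph{linear} first-order ODE system for $(\delta_n,\varpi_n)$. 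When $\lambda^*>0$ the operator multiplying $\delta_{n,t}$ is, up to a positive constant, the Gram matrix of the strain of the discrete Stokes solution operator, hence symmetric positive definite and invertible; when $\lambda^*=0$ the discrete \reff{2.19} degenerates to an algebraic relation of the form $(\chi_3 I+\text{pos.\ semidef.})\,\delta_n=\text{data}$, invertible since $\chi_3>0$ (this is where $a_0>0$ is used). In both cases Picard--Lindel\"of yields a unique Galerkin solution, global by linearity. Because the test functions used to prove Lemmas \ref{lem2.2}, \ref{lem2.3}, \ref{estimates} (and Theorem \ref{smooth}, under smoother data) --- namely $\pmb\tau_{n,t}$, $\delta_n$, $p_n$ and their time derivatives --- are admissible at the discrete level, those estimates hold for $(\pmb\tau_n,\delta_n,\varpi_n)$ with constants independent of $n$, giving uniform bounds on the norms on the left of \reff{2.41} (and of \reff{2.46}--\reff{2.48}).

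Passing to the limit is then straightforward precisely because the system is linear. Weak-$*$ and weak compactness extract a subsequence with $\pmb\tau_n\rightharpoonup\pmb\tau$ in $L^\infty(0,T;\bH^1_\perp)$, $\delta_n\rightharpoonup\delta$ and $\varpi_n\rightharpoonup\varpi$ in $L^\infty(0,T;L^2)$, $(\div\pmb\tau_n)_t\rightharpoonup(\div\pmb\tau)_t$ and $\nab p_n\rightharpoonup\nab p$ in $L^2(0,T;L^2)$, and $\varpi_{n,t}\rightharpoonup\varpi_t$ in $L^2(0,T;H^1(\Ome)')$, with $p=\chi_1\delta+\chi_2\varpi+\lambda^*\chi_1(\div\pmb\tau)_t$; testing the Galerkin equations against fixed elements of $\bigcup_n\bV_n$, $\bigcup_n Q_n$, $\bigcup_n W_n$ and using density then recovers \reff{2.18}--\reff{2.21} for the limit, while \reff{2.22} follows from $\varpi\in H^1(0,T;H^1(\Ome)')\hookrightarrow C([0,T];H^1(\Ome)')$ together with $\varpi_n(0)\to\varpi_0$, and $\pmb\tau(0)=\pmb\tau_0$ is inherited from the equivalence of the first step. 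Uniqueness also follows from linearity: the difference of two weak solutions solves the same system with $\mathbf{F}=\mathbf{F}_1=\phi=\phi_1=0$ and zero initial data, so the energy identity \reff{2.26} --- equivalently the estimate \reff{2.44} with its completion-of-square term $\tfrac{\chi_3}{2}\|\lambda^*\div\pmb\tau_t+\delta\|_{L^2(\Ome)}^2$ --- forces $\gamma\|\varepsilon(\pmb\tau)\|_{L^2(\Ome)}^2$, $\chi_2\|\varpi\|_{L^2(\Ome)}^2$ and $\chi_3\|\lambda^*\div\pmb\tau_t+\delta\|_{L^2(\Ome)}^2$ to vanish, whence $\pmb\tau\equiv0$, $\varpi\equiv0$, and then $\delta\equiv0$, $p\equiv0$, $q\equiv0$; for the original problem, uniqueness is transported back through the equivalence.

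I expect the main obstacle to be the mixed differential--algebraic character of the coupling, which moreover changes form between $\lambda^*>0$ and $\lambda^*=0$: one must correctly decide which variables are evolved and which are slaved, control the indefinite cross term $\lambda^*\chi_3(\div\pmb\tau_t,\delta)$ uniformly in $n$ via the square completion of Lemma \ref{estimates}, and --- most delicately --- justify that the formal test-function choices ($\pmb\tau_t$, $\delta$, $p$ and their time derivatives) used repeatedly in Section \ref{sec-2} are legitimate. The resolution adopted above is to avoid testing the limiting equations with those functions altogether and instead to derive every energy estimate at the finite-dimensional level, where the test functions genuinely lie in the discrete spaces, passing to the limit only afterwards.
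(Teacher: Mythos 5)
Your proposal is correct and follows the same overall strategy as the paper: existence via Galerkin approximation backed by the uniform bounds of Lemmas \ref{lem2.2}, \ref{lem2.1} and \ref{lem2.3}, and uniqueness via an energy argument exploiting linearity. Since the paper compresses the existence half into a single sentence (``standard Galerkin method and compactness argument''), your discussion of the equivalence of the two weak formulations, the inf--sup recovery of $\delta\in L^\infty(0,T;L^2(\Ome))$ from \reff{2.14}, and the differential--algebraic structure of the Galerkin system is added detail rather than a different route. The one place where the arguments genuinely diverge is uniqueness. The paper does not invoke the full energy law: it subtracts the two solutions, tests \reff{2.18} with $\bv=\pmb\tau_1-\pmb\tau_2$ and \reff{2.19} with $\varphi=\delta_1-\delta_2$, and arrives at the pointwise identity \reff{2.74}, which yields $\pmb\tau_1=\pmb\tau_2$ and $\delta_1=\delta_2$ without touching the diffusion equation \reff{2.20}; the uniqueness of $\varpi$ is then asserted separately, and rather weakly, from the mean-value identity $C_\varpi(t)$, which only pins down $(\varpi,1)$. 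Your version, which runs the energy law \reff{2.26}/\reff{2.44} on the difference, controls $\varpi$ through the $\chi_2\norm{\varpi}{L^2(\Ome)}^2$ term and so closes that small gap, at the price of carrying the whole coupled system; the paper's version is leaner and isolates the elastic subsystem. Both variants share the caveat the paper itself flags in Lemma \ref{lem2.2}: the derivation requires testing with $\pmb\tau_t$ (or differentiating \reff{2.18} in time), which is only formally admissible for a weak solution; your device of deriving every estimate at the finite-dimensional level resolves this for existence, but the uniqueness step still relies on it, exactly as in the paper.
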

\begin{proof}
	The existence of weak solution can be easily proved by using the standard Galerkin method and the compactness argument (cf. \cite{temam}). Lemma \ref{lem2.2}, Lemma \ref{lem2.1} and Lemma \ref{lem2.3} provide the required uniform estimates for the Galerkin approximate solutions, since the derivation is standard, here we omit the details. 
	
	Next, we prove the uniqueness of the weak solution of the problem \reff{2.5}-\reff{2.10}. Lemma \ref{estimates} and Theorem \ref{smooth} gives  the priori estimates for the weak solution. Since $ C_{\varpi}(t)=(\varpi(\cdot,t),1)=(\varpi_{0},1)+\left[(\phi,1)+\left\langle \phi_{1},1\right\rangle  \right] $. It's easy to check that $ \varpi $ is unique. We assume that $ (\pmb\tau_{1},\delta_{1},\varpi_{1}) $ and $ (\pmb\tau_{2},\delta_{2},\varpi_{2}) $ are the different solutions of \reff{2.18}-\reff{2.22}. Using \reff{2.18} and \reff{2.19}, we obtain
	\begin{align}
		\gamma(\varepsilon(\pmb\tau_{1})-\varepsilon(\pmb\tau_{2}),\varepsilon(\bv))-(\delta_{1}-\delta_{2},\div\bv)&=0~~~~\forall~\bv\in \bH^{1}(\varOmega),\label{2.72}\\
		\chi_3(\delta_{1}-\delta_{2},\varphi)+(\div\pmb\tau_{1}-\div\pmb\tau_{2},\varphi)+\lambda^{*}\chi_3((\div\pmb\tau_{1}-\div\pmb\tau_{2})_{t},\varphi)&=0~~~~~\forall \varphi\in L^{2}(\varOmega).\label{2.73}
	\end{align}
	Adding \reff{2.72} and \reff{2.73}, letting $ \bv=\pmb\tau_{1}-\pmb\tau_{2},~ \varphi=\delta_{1}-\delta_{2} $, we have
	\begin{align}
		\dfrac{\lambda^{*}\chi_3\gamma}{2}\dfrac{d}{dt}\left\|\varepsilon(\pmb\tau_{1})-\varepsilon(\pmb\tau_{2}) \right\|^{2}_{L^{2}(\Omega)}+\gamma\left\|\varepsilon(\pmb\tau_{1})-\varepsilon(\pmb\tau_{2}) \right\|^{2}_{L^{2}(\Omega)}+\chi_3\left\|\delta_{1}-\delta_{2} \right\|^{2}_{L^{2}(\Omega)} =0.\label{2.74} 
	\end{align}
	Using \reff{2.74} and the initial value $ \pmb\tau_{0} $, we obtain
	\begin{align*}
		\pmb\tau_{1}=\pmb\tau_{2},~~~\delta_{1}=\delta_{2}.	
	\end{align*}
	Since $ p=\chi_1\delta+\chi_2\varpi ,~q=\chi_1\varpi-\chi_3\delta$, so we have
	\begin{align*}
		p_{1}=p_{2},~~~~q_{1}=q_{2}.
	\end{align*}	
	Hence, the solution of the problem \reff{2.18}-\reff{2.22} is unique. The proof is complete.
\end{proof}
\begin{remark}
	We can prove the existence and uniqueness of problem \reff{2.15}-\reff{2.17} by using the same argument as Theorem \ref{thm2.8}. 
\end{remark}
\section{Fully discrete finite element methods}\label{sec-3}
\subsection{Formulation of fully discrete finite element methods}
Let $\mathcal{T}_h$ be a 
quasi-uniform triangulation or rectangular partition of $\Omega$ with maximum mesh size $h$, and $\bar{\Omega}=\bigcup_{\mathcal{K}\in\mathcal{T}_h}\bar{\mathcal{K}}$. The time interval $[0, T]$ is divided into $N$ equal intervals, denoted by $[t_{n-1}, t_{n}], n=1,2,...N$,  and $\Delta t=\frac{T}{N}$, then $t_n=n\Delta t$. In this work, we use backward Euler method and denote $ d_{t} v^{n}:=\frac{v^{n}-v^{n-1}}{\Delta t}$. 

Also, let $(\bX_h, M_h)$ be a stable mixed finite element pair, that is, $\bX_h\subset \bH^1(\Omega)$ and $M_h\subset L^2(\Omega)$ 
satisfy the inf-sup condition
\begin{alignat}{2}\label{3.1}
	\sup_{\bv_h\in \bX_h}\frac{({\rm div} \bv_h, \varphi_h)}{\|\bv_h\|_{H^1(\Ome)}}
	\geq \beta_0\|\varphi_h\|_{L^2(\Ome)} &&\quad \forall\varphi_h\in M_{0h}:=M_h\cap L_0^2(\Omega),\ \beta_0>0.
\end{alignat}

A number of stable mixed finite element spaces $(\bX_h, M_h)$ have been known in the literature
\cite{brezzi}. A well-known example is the following
so-called Taylor-Hood element (cf. \cite{ber,brezzi}):
\begin{align*}
	\bX_h &=\bigl\{\bv_h\in \bC^0(\overline{\Ome});\,
	\bv_h|_\mathcal{K}\in \bP_2(\mathcal{K})~~\forall \mathcal{K}\in \cT_h \bigr\}, \\
	M_h &=\bigl\{\varphi_h\in C^0(\overline{\Ome});\, \varphi_h|_\mathcal{K}\in P_1(\mathcal{K})
	~~\forall \mathcal{K}\in \cT_h \bigr\}.
\end{align*}

Finite element approximation space $W_h$ for $\varpi$ variable can be chosen independently, any piecewise polynomial space is acceptable provided that
$W_h \supset M_h$, the most convenient choice is $W_h =M_h$.

Define
\begin{equation}\label{3.2}
	\bV_h:=\bigl\{\bv_h\in \bX_h;\,  (\bv_h,\br)=0\,\,
	\forall \br\in \bRM \bigr\},
\end{equation}
it is easy to check that $\bX_h=\bV_h\bigoplus \bRM$. It was proved in \cite{fh10}
that there holds the following inf-sup condition:
\begin{align}\label{3.3}
	\sup_{\bv_h\in \bV_h}\frac{(\div\bv_h,\varphi_h)}{\norm{\bv_h}{H^1(\Ome)}} 
	\geq \beta_1 \norm{\varphi_h}{L^2(\Ome)} \quad \forall \varphi_h\in M_{0h}, \quad \beta_1>0.
\end{align}

Also, we recall the following inverse inequality for polynomial functions \cite{bs08,brezzi,cia} :
\begin{align}
	\left\|\nabla\varphi_{h} \right\|_{L^{2}(\mathcal{K})}\leq c_{1}h^{-1}\left\|\varphi_{h} \right\|_{L^{2}(\mathcal{K})}~~~~~\forall\varphi_{h}\in P_{r}(\mathcal{K}), \mathcal{K}\in T_{h}.\label{3.4} 
\end{align}

Now, we give the fully discrete multiphysics finite element algorithm for the problem \reff{2.5}-\reff{2.7}.

{\bf Multiphysics Finite Element Algorithm (MFEA)} 
\begin{itemize}
	\item[(i)]
	Compute $\pmb\tau^0_h\in \bV_h$ and $q^0_h\in W_h$ by $\pmb\tau^0_h =\pmb\tau_0, p^0_h =p_0$.
	\item[(ii)] For $n=0,1,2, \cdots$,  do the following two steps.
	
	{\em Step 1:} Solve for $(\pmb\tau^{n+1}_h,\delta^{n+1}_h, \varpi^{n+1}_h)\in \bV_h\times M_h \times  W_h$ such that
	\begin{eqnarray}
		&\qquad\gamma\bigl(\varepsilon(\pmb\tau_{h}^{n+1}), \vepsi(\bv_h) \bigr)-\bigl( \delta^{n+1}_h, \div \bv_h \bigr)
		= (\mathbf{F}, \bv_h)+\langle \mathbf{F}_1,\bv_h\rangle &~ \forall \bv_h\in \bV_h, \label{3.5}\\
		&\chi_3\bigl(\delta^{n+1}_h, \varphi_h \bigr) +\bigl(\div\pmb\tau^{n+1}_h, \varphi_h \bigr)+\lambda^{*}\chi_3(d_{t}\div\pmb\tau_{h}^{n+1},\varphi_{h})&\label{3.6}\\
		&\hskip 1in=\chi_1\bigl( \varpi^{n+\theta}_h, \varphi_h \bigr)
		&~ \forall \varphi_h \in M_h, \no \\
		&\bigl(d_t\varpi^{n+1}_h, \psi_h \bigr)
		+\frac{1}{\theta_f} \bigl(K(\nab (\chi_1\delta^{n+1}_h +\chi_2\varpi^{n+1}_h &\label{3.7}\\
		&\hskip 0.6in
		+\lambda^{*}\chi_1d_{t}\div\pmb\tau_{h}^{n+1}-\rho_f\bg,\nab\psi_h \bigr)=(\phi, \psi_h)+\langle \phi_1,\psi_h\rangle &~  \forall \psi_h\in W_h,  \no
	\end{eqnarray}
	where $ d_{t}(\div\pmb\tau_{h}^{n+1})=\dfrac{\div\pmb\tau_{h}^{n+1}-\div\pmb\tau_{h}^{n}}{\Delta t},d_{t}(\varpi_{h}^{n+1})=\dfrac{\varpi_{h}^{n+1}-\varpi_{h}^{n}}{\Delta t} $ and $\theta=0,1$.
	
	{\em Step 2:} Update $p^{n+1}_h$ and $q^{n+1}_h$ by
	\begin{alignat}{2}
		&p^{n+1}_h=\chi_1\delta^{n+1}_h +\chi_2\varpi^{n+\theta}_h+\lambda^{*}\chi_1d_{t}\div_{h}^{n+1},\label{3.8}\\
		&q^{n+1}_h=\chi_1\varpi^{n+\theta}_h-\chi_3\delta^{n+1}_h-\lambda^{*}\chi_3d_{t}\div_{h}^{n+1}.\label{3.9}
	\end{alignat}
\end{itemize}
\begin{lemma}\label{lma3.1}
	Let $\{(\pmb\tau_h^{n}, \delta_h^{n}, \varpi_h^{n})\}_{n\geq 0}$ be defined by the (MFEA), then there hold
	\begin{alignat}{2}
		(\varpi^{n}_h, 1)&=C_\varpi(t_n) &&\qquad\mbox{for } n=0, 1, 2, \cdots,\label{3.10}\\
		(\delta^{n}_h, 1)&=C_\delta(t_{n-1+\theta}) =\dfrac{1}{\frac{d\lambda^{*}\chi_3}{\Delta t}+\chi_3\gamma+d}\left[ \frac{d\lambda^{*}\chi_3}{\Delta t}\bigl(\delta_h^{n-1}, 1\bigr) \right.\label{3.11}\\
		&\left.+\chi_1\gamma C_\varpi(t_{n-1+\theta})-(\mathbf{F},\bx)-\left\langle\mathbf{F}_{1},\bx\right\rangle\right] &&\qquad\mbox{for }  n=1-\theta, 1, 2, \cdots,  \no \\
		\langle\pmb\tau^{n}_h\cdot\bn, 1\rangle &=C_{\pmb\tau}(t_{n-1+\theta})
		&&\qquad\mbox{for } n=1-\theta, 1, 2, \cdots.\label{3.12}
	\end{alignat}
\end{lemma}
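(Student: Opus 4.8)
The plan is to reproduce, at the fully discrete level, the computation behind Lemma~\ref{lem2.9}, by testing the scheme \eqref{3.5}--\eqref{3.7} with the special functions $\psi_h\equiv 1$, $\varphi_h\equiv 1$ and $\bv_h=\bx$ --- all admissible, since constants lie in $M_h\subset W_h$ and the affine field $\bx$ lies in $\bX_h$ --- and tracking the scalar identities that result. For \eqref{3.10} I take $\psi_h\equiv 1$ in \eqref{3.7}; since $\nab 1=0$ the Darcy term drops out, leaving $d_t(\varpi_h^{n+1},1)=(\phi,1)+\langle\phi_1,1\rangle$, and summing this from the initial level together with $(\varpi_h^0,1)=(\varpi_0,1)$ gives $(\varpi_h^n,1)=(\varpi_0,1)+[(\phi,1)+\langle\phi_1,1\rangle]\,t_n=C_\varpi(t_n)$, which is the exact discrete analogue of \eqref{2.62}.

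For \eqref{3.11} the key observation is that \eqref{3.5} in fact holds for every $\bv_h\in\bX_h$ and not only for $\bv_h\in\bV_h$: adding an arbitrary $\br\in\bRM$ to a test function changes neither side, since $\varepsilon(\br)=0$, $\div\br=0$ and, by hypothesis, $(\mathbf{F},\br)+\langle\mathbf{F}_1,\br\rangle=0$; as $\bX_h=\bV_h\oplus\bRM$ we may therefore insert $\bv_h=\bx$, and with $\varepsilon(\bx)=\mathbf{I}$, $\div\bx=d$ and $(\varepsilon(\pmb\tau_h^{n+1}),\mathbf{I})=(\div\pmb\tau_h^{n+1},1)$ this yields $\gamma(\div\pmb\tau_h^{n+1},1)=d(\delta_h^{n+1},1)+(\mathbf{F},\bx)+\langle\mathbf{F}_1,\bx\rangle$. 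Taking next $\varphi_h\equiv 1$ in \eqref{3.6} gives $\chi_3(\delta_h^{n+1},1)+(\div\pmb\tau_h^{n+1},1)+\lambda^{*}\chi_3\,d_t(\div\pmb\tau_h^{n+1},1)=\chi_1(\varpi_h^{n+\theta},1)$, and \eqref{3.10} turns the right-hand side into $\chi_1 C_\varpi(t_{n+\theta})$. Eliminating $(\div\pmb\tau_h^{\bullet},1)$ between these two relations leaves a two-term recursion for $(\delta_h^n,1)$ which, after the index shift $n\mapsto n-1$, is precisely the explicit formula written in \eqref{3.11}; moreover that recursion is the backward-Euler discretisation, on the nodes $t_{n-1+\theta}$, of the scalar ODE $d\lambda^{*}\chi_3(\delta_t,1)+(d+\chi_3\gamma)(\delta,1)=\chi_1\gamma(\varpi,1)-(\mathbf{F},\bx)-\langle\mathbf{F}_1,\bx\rangle$ appearing in the proof of Lemma~\ref{lem2.9}, whose solution is $C_\delta$, which is what justifies writing $(\delta_h^n,1)=C_\delta(t_{n-1+\theta})$.

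For \eqref{3.12} I invoke Gauss's divergence theorem to write $\langle\pmb\tau_h^n\cdot\bn,1\rangle=(\div\pmb\tau_h^n,1)$, so that \eqref{3.12} follows on combining the relation $\gamma(\div\pmb\tau_h^n,1)=d(\delta_h^n,1)+(\mathbf{F},\bx)+\langle\mathbf{F}_1,\bx\rangle$ derived above with \eqref{3.11} and with the identities \eqref{2.64}--\eqref{2.66} of Lemma~\ref{lem2.9}, which express $C_{\pmb\tau}=C_q$ through $C_\varpi$ and $C_\delta$; equivalently, testing \eqref{3.6} with $\varphi_h\equiv 1$ and substituting the update \eqref{3.9} shows $(\div\pmb\tau_h^n,1)=(q_h^n,1)$ --- the discrete counterpart of $q=\div\pmb\tau$ --- whence \eqref{3.12} is immediate.

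The argument carries no essential difficulty; it is a bookkeeping exercise running parallel to Lemma~\ref{lem2.9}. The points that genuinely require care are: the admissibility of the test function $\bv_h=\bx$ in \eqref{3.5}, which rests exactly on the compatibility hypothesis $(\mathbf{F},\bv)+\langle\mathbf{F}_1,\bv\rangle=0$ for $\bv\in\bRM$ together with the splitting $\bX_h=\bV_h\oplus\bRM$; the index and time-level bookkeeping forced by $\theta\in\{0,1\}$ and by the discrete time derivative $d_t(\div\pmb\tau_h^{n+1},1)$ stemming from the secondary-consolidation term $\lambda^{*}$ --- it is precisely this term that turns the closed-form expression valid when $\lambda^{*}=0$ into the genuine recursion \eqref{3.11} --- and the consistent treatment of the starting level $n=1-\theta$ and of the launching data $(\varpi_h^0,1)=(\varpi_0,1)$.
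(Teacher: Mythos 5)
Your proposal is correct and follows essentially the same route as the paper: test \reff{3.7} with $\psi_h\equiv 1$ and sum to get \reff{3.10}, test \reff{3.5} with $\bv_h=\bx$ and \reff{3.6} with $\varphi_h\equiv 1$, eliminate $(\div\pmb\tau_h^{n+1},1)$ to obtain the two-term recursion in \reff{3.11}, and conclude \reff{3.12} via the divergence theorem. Your explicit justification that $\bv_h=\bx$ is admissible (via $\bX_h=\bV_h\oplus\bRM$ and the compatibility condition on $\mathbf{F},\mathbf{F}_1$) is a point the paper's proof passes over silently, but it does not change the argument.
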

\begin{proof}
	Taking $\psi_h=1$ in \reff{3.7}, we have
	\begin{eqnarray}
		\bigl(d_t\varpi_h^{n+1}, 1\bigr) =(\phi,1) + \langle \phi_1, 1\rangle.\label{3.13}
	\end{eqnarray}
	Summing \reff{3.13} over $n$ from $0$ to $\ell \,(\geq 0)$, we get
	\begin{eqnarray}
		(\varpi_h^{\ell+1},1)=
		=(\varpi_0,1) + \bigl[(\phi,1) + \langle \phi_1, 1\rangle\bigr] t_{\ell+1}
		=C_{\varpi}(t_{\ell+1}), ~\ell=0,1,2,\cdots,
	\end{eqnarray}
which implies that \reff{3.10} holds.
	
	Taking $\bv_h=\bx$ in \reff{3.5} and $\varphi_h=1$ in \reff{3.6}, we get
	\begin{align}
		\gamma\bigl(\varepsilon(\pmb\tau_{h}^{n+1}), \mathbf{I} \bigr) -d\bigl( \delta_h^{n+1}, 1\bigr) 
		&=\bigl( \mathbf{F}, \bx\bigr)+ \langle \mathbf{F}_1,\bx\rangle,\label{3.15}\\
		\chi_3\bigl(\delta_h^{n+1}, 1\bigr) +\bigl( \div \pmb\tau_h^{n+1}, 1\bigr)+\lambda^{*}\chi_3(d_{t}\div\pmb\tau_{h}^{n+1},1) &=\chi_1 C_\varpi(t_{n+\theta}). \label{3.16}
	\end{align}
	Substituting \eqref{3.15} into \eqref{3.16}, we have
	\begin{eqnarray}
		&d\lambda^{*}\chi_3\bigl(d_{t}\delta_h^{n+1}, 1\bigr)+(\chi_3\gamma+d)(\delta_{h}^{n+1},1) = \chi_1\gamma C_\varpi(t_{n+\theta})-(\mathbf{F},\bx) -\left\langle\mathbf{F}_{1},\bx\right\rangle.\no  
	\end{eqnarray}
It is easy to check that
\begin{eqnarray}
	\left(\dfrac{d\lambda^{*}\chi_3}{\Delta t}+\chi_3\gamma+d\right) (\delta_{h}^{n+1},1)=\dfrac{d\lambda^{*}\chi_3}{\Delta t}\bigl(\delta_h^{n}, 1\bigr)+\chi_1\gamma C_\varpi(t_{n+\theta})-(\mathbf{F},\bx)-\left\langle\mathbf{F}_{1},\bx\right\rangle,\no
\end{eqnarray} 
	which implies \reff{3.11} holds for all $n\geq 1-\theta$. 
	
	Using \reff{3.10}, \reff{3.11}, \eqref{3.16} and Gauss divergence theorem, we deduce that \reff{3.12} holds.  The proof is complete.
\end{proof}
\begin{lemma}\label{lma3.3}
	Let $ \left\lbrace (\pmb\tau_{h}^{n},\delta_{h}^{n},\varpi_{h}^{n})\right\rbrace_{n\geq0}  $ be defined by the (MFEA), then there holds the following inequality:
	\begin{align}
		J_{h,\theta}^{l+1}+S_{h,\theta}^{l+1}= J_{h,\theta}^{0}~~~~~~~~~~for~l\geq0,~\theta=0,1, \label{3.17}
	\end{align}
	where
	\begin{align*}
		&J_{h,\theta}^{l+1}:=\dfrac{1}{2}\left[\gamma\left\|\varepsilon(\pmb\tau_{h}^{l+1}) \right\|_{L^{2}(\Omega)}^{2}+\chi_2\left\|\varpi_{h}^{l+\theta}\right\|_{L^{2}(\Omega)}^{2}+\chi_3\left\|\lambda^{*}d_{t}\div\pmb\tau_{h}^{l+1}+\delta_{h}^{l+1}\right\|_{L^{2}(\varOmega)}^{2}\right.\\
		&\left.+\dfrac{\gamma\lambda^{*}\chi_3\Delta t}{2}\left\|d_{t}\varepsilon(\pmb\tau_{h}^{l+1}) \right\|_{L^{2}(\Omega)}^{2}-2(\mathbf{F},\pmb\tau_{h}^{l+1})-2\left\langle\mathbf{F}_{1},\pmb\tau_{h}^{l+1} \right\rangle \right],\\
		&S_{h,\theta}^{l+1}:=\varDelta t\sum_{n=0}^{l}\left[\lambda^{*}\left\|d_{t}\div \pmb\tau_{h}^{n+1}\right\|_{L^{2}(\Omega)}^{2}+\dfrac{\gamma\varDelta t}{2}\left\|d_{t}\varepsilon(\pmb\tau_{h}^{n+1})\right\|_{L^{2}(\Omega)}^{2}\right.\\
		&+\dfrac{1}{\theta_f}(K\nabla p_{h}^{n+1}-K\rho_{f}g,\nabla p_{h}^{n+1})+\dfrac{\chi_2\varDelta t}{2}\left\|d_{t}\varpi_{h}^{n+\theta}\right\|_{L^{2}(\Omega)}^{2}+\dfrac{\chi_3\varDelta t}{2}\left\|d_{t}\delta_{h}^{n+1}\right\|_{L^{2}(\Omega)}^{2}  \\
		&+\dfrac{\chi_3\varDelta t}{2}\left\|\lambda^{*}d_{t}^{2}\div\pmb\tau_{h}^{n+1}\right\|_{L^{2}(\Omega)}^{2}+\dfrac{\gamma\lambda^{*}\chi_3(\Delta t)^{2}}{2}\left\|d_{t}^{2}\varepsilon(\pmb\tau_{h}^{n+1}) \right\|_{L^{2}(\Omega)}^{2}\no\\
		&-(\phi,p_{h}^{n+1})-\left\langle\phi_{1},p_{h}^{n+1}\right\rangle-(1-\theta)\dfrac{\chi_1\varDelta t}{\theta_f}(Kd_{t}\nabla\delta_{h}^{n+1},\nabla p_{h}^{n+1})\no\\
		&\left.-(1-\theta)\dfrac{\chi_1\lambda^{*}\varDelta t}{\theta_f}(Kd_{t}^{2}\nabla\div\pmb\tau_{h}^{n+1},\nabla p_{h}^{n+1})\right]. 
	\end{align*}
\end{lemma}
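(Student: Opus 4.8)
The plan is to mimic, at the fully discrete level, the continuous energy argument used to prove Lemma \ref{lem2.2} and the regularity estimate \reff{2.47}, replacing time differentiation by the backward difference operator $d_t$ and using summation by parts in $n$ in place of integration in $t$. First I would take $\bv_h = d_t\pmb\tau_h^{n+1}$ in \reff{3.5} (written at both step $n+1$ and step $n$, then differenced, i.e.\ first apply $d_t$ to \reff{3.5}), take $\varphi_h = d_t\delta_h^{n+1}$ in the $d_t$-differenced version of \reff{3.6}, and take $\psi_h = p_h^{n+1}$ in \reff{3.7}. The discrete product rule $d_t(a^{n+1}b^{n+1}) = (d_t a^{n+1})b^{n+1} + a^n (d_t b^{n+1})$ and the polarization identity $2(d_t a^{n+1}, a^{n+1}) = d_t\|a^{n+1}\|^2 + \Delta t\|d_t a^{n+1}\|^2$ will generate the squared-norm terms appearing in $J_{h,\theta}^{l+1}$ together with the $\Delta t$-weighted dissipation terms $\frac{\gamma\Delta t}{2}\|d_t\varepsilon(\pmb\tau_h^{n+1})\|^2$, $\frac{\chi_2\Delta t}{2}\|d_t\varpi_h^{n+\theta}\|^2$, $\frac{\chi_3\Delta t}{2}\|d_t\delta_h^{n+1}\|^2$, etc., in $S_{h,\theta}^{l+1}$; these extra positive terms are precisely the discrete analogue of the numerical-dissipation artifacts of backward Euler and have no continuous counterpart.

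Next I would handle the cross terms involving $\lambda^{*}$. Exactly as in \reff{2.54}, I would use the discrete constitutive relation $d_t q_h^{n+1} = \chi_1 d_t\varpi_h^{n+\theta} - \chi_3 d_t\delta_h^{n+1} - \lambda^{*}\chi_3 d_t^2\div\pmb\tau_h^{n+1}$ (obtained by applying $d_t$ to \reff{3.9}) to rewrite $\lambda^{*}\chi_3(d_t^2\div\pmb\tau_h^{n+1}, d_t\delta_h^{n+1}) + \lambda^{*}\chi_1(d_t\varpi_h^{n+\theta}, d_t^2\div\pmb\tau_h^{n+1})$ as a telescoping term $\lambda^{*}\chi_3 d_t(d_t\div\pmb\tau_h^{n+1}, d_t\delta_h^{n+1})$ plus $\lambda^{*}\|d_t\div\pmb\tau_h^{n+1}\|^2$ plus $\frac{(\lambda^{*})^2\chi_3}{2}d_t\|d_t\div\pmb\tau_h^{n+1}\|^2$ (up to $\Delta t$-weighted remainders), and then use the discrete version of the polarization identity \reff{2.60} to absorb $\|d_t\delta_h^{n+1}\|^2$, $(d_t\div\pmb\tau_h^{n+1}, d_t\delta_h^{n+1})$ and $\|d_t\div\pmb\tau_h^{n+1}\|^2$ into the single combined norm $\chi_3\|\lambda^{*}d_t\div\pmb\tau_h^{n+1} + \delta_h^{n+1}\|^2$. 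The $\gamma\lambda^{*}\chi_3$-term in $J_{h,\theta}^{l+1}$ comes from substituting the $d_t$-differenced \reff{3.5} tested with $\bv_h = d_t^2\pmb\tau_h^{n+1}$, paralleling \reff{2.53}. After all substitutions, I sum over $n$ from $0$ to $l$; the telescoping (discrete $d_t(\cdots)$) terms collapse to the difference $J_{h,\theta}^{l+1} - J_{h,\theta}^{0}$, while the non-telescoping terms are collected into $S_{h,\theta}^{l+1}$, which yields \reff{3.17}.

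The $\theta$-dependent terms deserve separate care: when $\theta = 0$, the term $\chi_1(\varpi_h^{n+\theta}, \cdot) = \chi_1(\varpi_h^n, \cdot)$ is evaluated at the old time level, so after differencing \reff{3.6} in $n$ one gets $\chi_1(d_t\varpi_h^{n}, \cdot)$ rather than $\chi_1(d_t\varpi_h^{n+1}, \cdot)$, producing the mismatch corrections $-(1-\theta)\frac{\chi_1\Delta t}{\theta_f}(K d_t\nab\delta_h^{n+1}, \nab p_h^{n+1})$ and $-(1-\theta)\frac{\chi_1\lambda^{*}\Delta t}{\theta_f}(K d_t^2\nab\div\pmb\tau_h^{n+1}, \nab p_h^{n+1})$; these vanish identically for $\theta = 1$, and for $\theta = 0$ they are exactly the leftover terms one obtains by writing $\varpi_h^n = \varpi_h^{n+1} - \Delta t\, d_t\varpi_h^{n+1}$ and tracking the extra $\Delta t\, d_t\varpi_h^{n+1}$ contribution through \reff{3.7}–\reff{3.9}. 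The main obstacle I anticipate is bookkeeping rather than conceptual: keeping the discrete product-rule remainder terms, the $\theta$-shift corrections, and the $\lambda^{*}$ cross terms consistently aligned so that every non-telescoping term lands in $S_{h,\theta}^{l+1}$ with exactly the stated coefficient, and verifying that the test functions $d_t\pmb\tau_h^{n+1}$, $d_t^2\pmb\tau_h^{n+1}$, $d_t\delta_h^{n+1}$, $p_h^{n+1}$ are admissible in the respective discrete spaces $\bV_h$, $M_h$, $W_h$ for all $n \ge 0$ (in particular using $\pmb\tau_h^{-1} := \pmb\tau_h^0$ or the stated initialization so that $d_t^2$ is well defined at $n = 0$).
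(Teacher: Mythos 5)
Your overall strategy --- discretize the continuous energy argument of Lemma \ref{lem2.2}, use the polarization identity $2(d_ta^{n+1},a^{n+1})=d_t\|a^{n+1}\|^2+\Delta t\|d_ta^{n+1}\|^2$ to produce both the telescoping terms of $J_{h,\theta}^{l+1}$ and the $\Delta t$-weighted dissipation in $S_{h,\theta}^{l+1}$, rewrite the $\lambda^{*}$ cross terms via the discrete relation for $d_tq_h^{n+1}$, bring in the $d_t$-differenced \reff{3.5} tested with $d_t^2\pmb\tau_h^{n+1}$ to create the $\gamma\lambda^{*}\chi_3$ terms, and treat the $(1-\theta)$ terms as time-level mismatches --- is exactly the paper's. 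But your choice of test functions for the first two equations is wrong, and with it the identity \reff{3.17} cannot be derived. You propose to apply $d_t$ to \reff{3.5} and test with $d_t\pmb\tau_h^{n+1}$, and to test the $d_t$-differenced \reff{3.6} with $d_t\delta_h^{n+1}$; these are the choices for the \emph{first-order} estimate (the discrete analogue of \reff{2.47}), not for the energy law. Three things then break: (i) the differenced \reff{3.5} has zero right-hand side (the data are time-independent), so the terms $-2(\mathbf{F},\pmb\tau_h^{l+1})-2\langle\mathbf{F}_1,\pmb\tau_h^{l+1}\rangle$ and $\gamma\|\varepsilon(\pmb\tau_h^{l+1})\|_{L^2(\Omega)}^2$ in $J_{h,\theta}^{l+1}$ can never appear; (ii) testing the differenced \reff{3.6} with $d_t\delta_h^{n+1}$ produces $\chi_1(d_t\varpi_h^{n+\theta},d_t\delta_h^{n+1})$, which does not cancel the term $\chi_1(d_t\varpi_h^{n+\theta},\delta_h^{n+1})$ arising from testing \reff{3.7} with $p_h^{n+1}=\chi_1\delta_h^{n+1}+\chi_2\varpi_h^{n+\theta}+\lambda^{*}\chi_1d_t\div\pmb\tau_h^{n+1}$ --- and that cancellation is the engine of the whole argument; (iii) your cross terms carry $d_t\delta_h^{n+1}$ and $d_t^2\div\pmb\tau_h^{n+1}$ where the energy law needs $\delta_h^{n+1}$ and $d_t\div\pmb\tau_h^{n+1}$, so they cannot be absorbed into $\chi_3\|\lambda^{*}d_t\div\pmb\tau_h^{l+1}+\delta_h^{l+1}\|_{L^2(\Omega)}^2$ (you would instead produce a norm of $\lambda^{*}d_t\div\pmb\tau_h^{l+1}+d_t\delta_h^{l+1}$). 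Your own later statements --- polarization yielding $\tfrac{\gamma}{2}d_t\|\varepsilon(\pmb\tau_h^{n+1})\|^2$, and the combined norm containing $\delta_h^{l+1}$ --- are inconsistent with the test functions you actually wrote down.

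The correct combination, which the paper uses, is: the \emph{undifferenced} \reff{3.5} tested with $\bv_h=d_t\pmb\tau_h^{n+1}$; the $d_t$-differenced \reff{3.6} tested with $\varphi_h=\delta_h^{n+1}$; and \reff{3.7} tested with $\psi_h=p_h^{n+1}$ (with the superindex lowered from $n+1$ to $n$ when $\theta=0$, which is the actual source of the two $(1-\theta)$ correction terms --- note they record the mismatch in the $\delta_h$ and $d_t\div\pmb\tau_h$ components of the flux, not in $\varpi_h$, since $\chi_2\varpi_h^{n}$ appears at the same level on both sides when $\theta=0$); all supplemented by the differenced \reff{3.5} tested with $\Delta t\,d_t^2\pmb\tau_h^{n+1}$. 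For $\theta=0$ one must also define $\varpi_h^{-1}$ consistently through \reff{3.6} at $n=0$, a point your last sentence gestures at but does not pin down. With these substitutions the remainder of your outline (telescoping, the $d_tq_h^{n+1}$ identity, the collection of non-telescoping terms into $S_{h,\theta}^{l+1}$) goes through as you describe.
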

\begin{proof}
(i) When $ \theta=0 $, based on \reff{3.5}, we can  define $ \varpi_{h}^{-1} $ by
\begin{align}
	\chi_1(\varpi_{h}^{-1},\varphi_{h})=\chi_3(\delta_{h}^{0},\varphi_{h})+(\div\pmb\tau_{h}^{0},\varphi_{h})+\lambda^{*}\chi_{3}(d_{t}\div\pmb\tau_{h}^{0},\varphi_{h}).\label{3.18}
\end{align}
Setting $ \bv_{h}=d_{t}\pmb\tau_{h}^{n+1} $ in \reff{3.5}, $ \varphi_{h}=\delta_{h}^{n+1} $ in\reff{3.6} and $ \psi_{h}=p_{h}^{n+1} $ in \reff{3.7}  after lowing the super-index from $ n+1 $ to $ n $ on both sides of \reff{3.7}, we get
\begin{eqnarray}
	&\gamma(\varepsilon(\pmb\tau_{h}^{n+1}),\varepsilon(d_{t}\pmb\tau_{h}^{n+1}))-(\delta_{h}^{n+1},d_{t}\div \pmb\tau_{h}^{n+1})=\left(\mathbf{F},d_{t}\pmb\tau_{h}^{n+1}\right)+\left\langle \mathbf{F}_{1},d_{t}\pmb\tau_{h}^{n+1}\right\rangle,\label{3.19}\\
	&\qquad\chi_3(d_{t}\delta_{h}^{n+1},\delta_{h}^{n+1})+(d_{t}\div \pmb\tau_{h}^{n+1},\delta_{h}^{n+1})+\lambda^{*}\chi_3(d_{t}^{2}\div\pmb\tau_{h}^{n+1},\delta_{h}^{n+1})=\chi_1(d_{t}\varpi_{h}^{n},\delta_{h}^{n+1}),\label{3.20}\\
    &\qquad(d_{t}\varpi_{h}^{n},p_{h}^{n+1})+\dfrac{1}{\theta_f}(K(\nabla(\chi_1\delta_{h}^{n}+\chi_2\varpi_{h}^{n}+\lambda^{*}\chi_1d_{t}\div\pmb\tau_{h}^{n})-\rho_{f}\bg),\nabla p_{h}^{n+1})=(\phi,p_{h}^{n+1})+\left\langle\phi_{1},p_{h}^{n+1} \right\rangle.\label{3.21}
\end{eqnarray}
Adding \reff{3.19}-\reff{3.21}, we have
\begin{eqnarray}
	&\gamma(\varepsilon(\pmb\tau_{h}^{n+1}),\varepsilon(d_{t}\pmb\tau_{h}^{n+1}))+\chi_3(d_{t}\delta_{h}^{n+1},\delta_{h}^{n+1})+\lambda^{*}\chi_3(d_{t}^{2}\div\pmb\tau_{h}^{n+1},\delta_{h}^{n+1})\label{3.22}\\
	&+\chi_1(d_{t}\varpi_{h}^{n},\varpi_{h}^{n})+(d_{t}\varpi_{h}^{n},\lambda^{*}\chi_1d_{t}\div\pmb\tau_{h}^{n+1})+\dfrac{1}{\theta_f}(K\nabla p_{h}^{n+1}-K\rho_{f}g,\nabla p_{h}^{n+1})\no\\
	&-(1-\theta)\dfrac{\chi_1\varDelta t}{\theta_f}(Kd_{t}\nabla\delta_{h}^{n+1},\nabla p_{h}^{n+1})-(1-\theta)\dfrac{\chi_1\varDelta t}{\theta_f}(Kd_{t}^{2}\nabla\div\pmb\tau_{h}^{n+1},\nabla p_{h}^{n+1})\no\\
	&=\left(\mathbf{F},d_{t}\pmb\tau_{h}^{n+1}\right)+\left\langle \mathbf{F}_{1},d_{t}\pmb\tau_{h}^{n+1}\right\rangle+(\phi,p_{h}^{n+1})+\left\langle\phi_{1},p_{h}^{n+1} \right\rangle.\no
\end{eqnarray}
Using the equality $ d_{t}q^{n+1}_h=\chi_1d_{t}\varpi^{n}_h-d_{t}\chi_3\delta^{n+1}_h-\lambda^{*}d_{t}^{2}\chi_3\div_{h}^{n+1} $ to deal with the term $ \lambda^{*}\chi_3(d_{t}^{2}\div\pmb\tau_{h}^{n+1},\delta_{h}^{n+1}) $ and $ (d_{t}\varpi_{h}^{n+1},\lambda^{*}\chi_1d_{t}\div\pmb\tau_{h}^{n+1}) $ in \reff{3.22}, we have
\begin{eqnarray}
	&&\qquad\lambda^{*}\chi_3(d_{t}^{2}\div\pmb\tau_{h}^{n+1},\delta_{h}^{n+1})+(d_{t}\varpi_{h}^{n},\lambda^{*}\chi_1d_{t}\div\pmb\tau_{h}^{n+1})\label{3.23}\\
	&&=\lambda^{*}\chi_3d_{t}(d_{t}\div\pmb\tau_{h}^{n+1},\delta_{h}^{n+1})-\lambda^{*}\chi_3(d_{t}\div\pmb\tau_{h}^{n},d_{t}\delta_{h}^{n+1})+(d_{t}\varpi_{h}^{n},\lambda^{*}\chi_1d_{t}\div\pmb\tau_{h}^{n+1})\no\\
	&&=\lambda^{*}\chi_3d_{t}(d_{t}\div\pmb\tau_{h}^{n+1},\delta_{h}^{n+1})-\lambda^{*}\chi_3(d_{t}\div\pmb\tau_{h}^{n},d_{t}\delta_{h}^{n+1})+\lambda^{*}\chi_3(d_{t}\div\pmb\tau_{h}^{n+1},d_{t}\delta_{h}^{n+1})\no\\
	&&-\lambda^{*}\chi_3(d_{t}\div\pmb\tau_{h}^{n+1},d_{t}\delta_{h}^{n+1})+(d_{t}\varpi_{h}^{n},\lambda^{*}\chi_1d_{t}\div\pmb\tau_{h}^{n+1})\no\\
	&&=\lambda^{*}\chi_3d_{t}(d_{t}\div\pmb\tau_{h}^{n+1},\delta_{h}^{n+1})+\lambda^{*}\chi_3\Delta t(d_{t}^{2}\div\pmb\tau_{h}^{n+1},d_{t}\delta_{h}^{n+1})\no\\
	&&\qquad+\lambda^{*}(d_{t}\div\pmb\tau_{h}^{n+1},\chi_1d_{t}\varpi_{h}^{n}-\chi_3d_{t}\delta_{h}^{n+1})\no\\
	&&=\lambda^{*}\chi_3d_{t}(d_{t}\div\pmb\tau_{h}^{n+1},\delta_{h}^{n+1})+\lambda^{*}\chi_3\Delta t(d_{t}^{2}\div\pmb\tau_{h}^{n+1},d_{t}\delta_{h}^{n+1})\no\\
	&&\qquad+\lambda^{*}(d_{t}\div\pmb\tau_{h}^{n+1},d_{t}\div\pmb\tau_{h}^{n+1}+\lambda^{*}\chi_3d_{t}^{2}\div\pmb\tau_{h}^{n+1})\no\\
	&&=\lambda^{*}\chi_3d_{t}(d_{t}\div\pmb\tau_{h}^{n+1},\delta_{h}^{n+1})+\lambda^{*}\chi_3\Delta t(d_{t}^{2}\div\pmb\tau_{h}^{n+1},d_{t}\delta_{h}^{n+1})\no\\
	&&\qquad+\lambda^{*}(d_{t}\div\pmb\tau_{h}^{n+1},d_{t}\div\pmb\tau_{h}^{n+1})+\chi_3(\lambda^{*}d_{t}\div\pmb\tau_{h}^{n+1},\lambda^{*}d_{t}^{2}\div\pmb\tau_{h}^{n+1}).\no
\end{eqnarray}
Taking $ \bv_{h}=d_{t}^{2}\pmb\tau_{h}^{n+1} $ in \reff{3.5}, we get 
\begin{eqnarray}
	\gamma\Delta t(\varepsilon(d_{t}\pmb\tau_{h}^{n+1}),\varepsilon(d_{t}^{2}\pmb\tau_{h}^{n+1}))-\Delta t(d_{t}\delta_{h}^{n+1},d_{t}^{2}\div\pmb\tau_{h}^{n+1})=0.\label{3.24}
\end{eqnarray}
Using the equality $ (d_{t}a_{h}^{n+1},a_{h}^{n+1})=\dfrac{\Delta t}{2}\left\| d_{t}a_{h}^{n+1}\right\|_{L^{2}(\Omega)}^{2}+\dfrac{1}{2}d_{t}\left\| a_{h}^{n+1}\right\|_{L^{2}(\Omega)}^{2}$, \reff{3.23} and \reff{3.24} in\reff{3.22}, where $ a=\varepsilon(\pmb\tau),\delta,\varpi$ and $\div\pmb\tau $, we get
\begin{eqnarray}
	&&\qquad\dfrac{\gamma}{2}d_{t}\left\|\varepsilon(\pmb\tau_{h}^{n+1}) \right\|_{L^{2}(\Omega)}^{2}+\dfrac{\gamma\Delta t}{2}\left\|d_{t}\varepsilon(\pmb\tau_{h}^{n+1}) \right\|_{L^{2}(\Omega)}^{2}+\dfrac{\chi_3\varDelta t}{2}\left\|d_{t}\delta_{h}^{n+1} \right\|_{L^{2}(\Omega)}^{2}\label{3.25}\\
	&&~+\dfrac{\chi_3\varDelta t}{2}\left\|\lambda^{*}d_{t}^{2}\div\pmb\tau_{h}^{n+1} \right\|_{L^{2}(\Omega)}^{2}+\dfrac{\chi_3}{2}d_{t}\left\|\lambda^{*}d_{t}\div\pmb\tau_{h}^{n+1}+\delta_{h}^{n+1} \right\|_{L^{2}(\Omega)}^{2}\no\\
	&&~+\dfrac{\chi_2\varDelta t}{2}\left\|d_{t}\varpi_{h}^{n} \right\|_{L^{2}(\Omega)}^{2}+\dfrac{\chi_2}{2}d_{t}\left\|\varpi_{h}^{n} \right\|_{L^{2}(\Omega)}^{2}+\lambda^{*}\left\|d_{t}\div\pmb\tau_{h}^{n+1}\right\|_{L^{2}(\Omega)}^{2}\no\\
	&&~+\dfrac{\gamma\lambda^{*}\chi_3\Delta t}{2}d_{t}\left\|d_{t}\varepsilon(\pmb\tau_{h}^{n+1}) \right\|_{L^{2}(\Omega)}^{2}+\dfrac{\gamma\lambda^{*}\chi_3(\Delta t)^{2}}{2}\left\|d_{t}^{2}\varepsilon(\pmb\tau_{h}^{n+1}) \right\|_{L^{2}(\Omega)}^{2}\no\\
	&&~+\dfrac{1}{\theta_f}(K\nabla p_{h}^{n+1}-K\rho_{f}\bg,\nabla p_{h}^{n+1})-(1-\theta)\dfrac{\chi_1\varDelta t}{\theta_f}(Kd_{t}\nabla\delta_{h}^{n+1},\nabla p_{h}^{n+1})\no\\
	&&~-(1-\theta)\dfrac{\chi_1\varDelta t}{\theta_f}(Kd_{t}^{2}\nabla\div\pmb\tau_{h}^{n+1},\nabla p_{h}^{n+1})\no\\
	&&=\left(\mathbf{F},d_{t}\pmb\tau_{h}^{n+1}\right)+\left\langle \mathbf{F}_{1},d_{t}\pmb\tau_{h}^{n+1}\right\rangle+(\phi,p_{h}^{n+1})+\left\langle\phi_{1},p_{h}^{n+1} \right\rangle\no
\end{eqnarray}
and applying the summation operator $ \Delta t\sum_{n=0}^{l} $ to the both sides of \reff{3.25}, we see that \reff{3.17}  holds for $ \theta=0$.

(ii) When $ \theta=1 $, we can prove \reff{3.17} by using the similar process for the case $ \theta=0 $, so here we omit the more details. The proof is complete.
\end{proof}

\begin{lemma}
	Let $ \left\lbrace (\pmb\tau_{h}^{n},\delta_{h}^{n},\varpi_{h}^{n})\right\rbrace_{n\geq0}  $ be defined by the (MFEA) with $ \theta=0 $, then there holds the following inequality:
	\begin{align}
		J_{h,0}^{l+1}+\hat{S}_{h,0}^{l+1}\leq J_{h,0}^{0},~~~~~~~~~~for~l\geq0, \label{3.26}
	\end{align}
	provided that $ \varDelta t=O(h^{2}) $. Here
	\begin{align*}
		&\hat{S}_{h,0}^{l+1}:=\varDelta t\sum_{n=0}^{l}\left[\lambda^{*}\left\|d_{t}\div \pmb\tau_{h}^{n+1}\right\|_{L^{2}(\Omega)}^{2}+\dfrac{\gamma\varDelta t}{4}\left\|d_{t}\varepsilon(\pmb\tau_{h}^{n+1})\right\|_{L^{2}(\Omega)}^{2}\right.\\
		&+\dfrac{1}{\theta_f}(K\nabla p_{h}^{n+1}-K\rho_{f}\bg,\nabla p_{h}^{n+1})+\dfrac{\chi_2\varDelta t}{2}\left\|d_{t}\varpi_{h}^{n}\right\|_{L^{2}(\Omega)}^{2}  +\dfrac{\chi_3\varDelta t}{2}\left\|d_{t}\delta_{h}^{n+1} \right\|_{L^{2}(\Omega)}^{2}\\
		&~\left.+ \dfrac{\chi_3\varDelta t}{4} \left\|\lambda^{*}d_{t}^{2}\div\pmb\tau_{h}^{n+1} \right\|_{L^{2}(\Omega)}^{2}+ \dfrac{\gamma\lambda^{*}\chi_3}{2} \left\|d_{t}^{2}\varepsilon(\pmb\tau_{h}^{n+1}) \right\|_{L^{2}(\Omega)}^{2}-(\phi,p_{h}^{n+1})-\left\langle\phi_{1},p_{h}^{n+1}\right\rangle\right]. 
	\end{align*}
\end{lemma}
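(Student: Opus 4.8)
The plan is to derive \eqref{3.26} from the exact energy identity \eqref{3.17} of Lemma~\ref{lma3.3} specialized to $\theta=0$. That identity reads $J_{h,0}^{l+1}+S_{h,0}^{l+1}=J_{h,0}^0$, and $S_{h,0}^{l+1}$ carries, besides the dissipation already contained in $\hat S_{h,0}^{l+1}$ (with strictly larger coefficients on the $\|d_t\varepsilon(\pmb\tau_h^{n+1})\|_{L^2(\Ome)}^2$ and $\|\lambda^{*}d_t^{2}\div\pmb\tau_h^{n+1}\|_{L^2(\Ome)}^2$ contributions), exactly two extra ``decoupling residuals'',
\[
-\frac{\chi_1\Delta t}{\theta_f}\bigl(K\,d_t\nabla\delta_h^{n+1},\nabla p_h^{n+1}\bigr)
\qquad\text{and}\qquad
-\frac{\chi_1\lambda^{*}\Delta t}{\theta_f}\bigl(K\,d_t^{2}\nabla\div\pmb\tau_h^{n+1},\nabla p_h^{n+1}\bigr).
\]
So it suffices to absorb these two residuals into the spare dissipation together with the pressure term $\frac{1}{\theta_f}(K\nabla p_h^{n+1},\nabla p_h^{n+1})$. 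The residuals are precisely the mismatch between $\nabla p_h^{n+1}$ and the lagged gradient $\nabla(\chi_1\delta_h^{n}+\chi_2\varpi_h^{n}+\lambda^{*}\chi_1 d_t\div\pmb\tau_h^{n})$ that enters \eqref{3.7} after lowering the index, so each carries a factor $\Delta t$, and this factor — together with the mesh restriction $\Delta t=O(h^2)$ — is what makes them controllable.

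For the second residual, Cauchy--Schwarz, the bound $K\le K_2\mathbf{I}$, and the inverse inequality \eqref{3.4} applied to the scalar polynomial $d_t^{2}\div\pmb\tau_h^{n+1}$ give a bound by $\frac{\chi_1\lambda^{*}K_2c_1\Delta t}{\theta_f}\,h^{-1}\,\|d_t^{2}\div\pmb\tau_h^{n+1}\|_{L^2(\Ome)}\,\|\nabla p_h^{n+1}\|_{L^2(\Ome)}$. Young's inequality then splits this into a piece bounded by $\frac{(\lambda^{*})^{2}\chi_3\Delta t}{4}\|d_t^{2}\div\pmb\tau_h^{n+1}\|_{L^2(\Ome)}^{2}$, absorbed by the gap between the $\frac{\chi_3\Delta t}{2}\|\lambda^{*}d_t^{2}\div\pmb\tau_h^{n+1}\|^{2}$ in $S_{h,0}^{l+1}$ and the $\frac{\chi_3\Delta t}{4}\|\lambda^{*}d_t^{2}\div\pmb\tau_h^{n+1}\|^{2}$ kept in $\hat S_{h,0}^{l+1}$, and a piece bounded by $C\Delta t\,h^{-2}\|\nabla p_h^{n+1}\|_{L^2(\Ome)}^{2}$ with $C=C(\chi_1,\lambda^{*},K_2,\theta_f,c_1)$; choosing the constant in $\Delta t=O(h^2)$ small enough makes $C\Delta t h^{-2}\le K_1/(2\theta_f)$, so this piece is swallowed by $\frac{1}{\theta_f}(K\nabla p_h^{n+1},\nabla p_h^{n+1})\ge\frac{K_1}{\theta_f}\|\nabla p_h^{n+1}\|^{2}$ (after peeling off the $\rho_f\bg$ cross term with another Young's inequality). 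The first residual is handled the same way once $\|d_t\delta_h^{n+1}\|_{L^2(\Ome)}$ is controlled: differentiating \eqref{3.5} in $t$ — so that the $t$-independent data $\mathbf{F},\mathbf{F}_1$ drop out — and combining the discrete inf--sup condition \eqref{3.3} with a trace estimate on $\bV_h$ gives $\|d_t\delta_h^{n+1}\|_{L^2(\Ome)}\le\frac{\gamma}{\beta_1}\|d_t\varepsilon(\pmb\tau_h^{n+1})\|_{L^2(\Ome)}+C\,|\overline{d_t\delta_h^{n+1}}|$, while $\overline{d_t\delta_h^{n+1}}=\frac1{|\Ome|}d_tC_\delta(t_n)$ is bounded in terms of the data through \eqref{3.11}; the inverse inequality and Young's inequality then send the $\|d_t\varepsilon(\pmb\tau_h^{n+1})\|^{2}$ part into the gap $\frac{\gamma\Delta t}{4}\|d_t\varepsilon(\pmb\tau_h^{n+1})\|^{2}$ between $S_{h,0}^{l+1}$ and $\hat S_{h,0}^{l+1}$, while the $\|\nabla p_h^{n+1}\|^{2}$ part and the data terms are again killed by $\Delta t=O(h^2)$.

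Summing these bounds over $n=0,\dots,l$ and inserting them into \eqref{3.17} yields $J_{h,0}^{l+1}+\hat S_{h,0}^{l+1}\le J_{h,0}^0$, i.e.\ \eqref{3.26}. The main obstacle is exactly the control of these two residuals: each carries an $h^{-1}$ from the inverse inequality and a $\Delta t$ from the time lag, so the argument closes only under the parabolic-type restriction $\Delta t=O(h^2)$, and the delicate part is the bookkeeping of constants — keeping $\overline{d_t\delta_h^{n+1}}$ and the $\rho_f\bg$ contribution under control and not spending more of the pressure dissipation $\frac{1}{\theta_f}(K\nabla p_h^{n+1},\nabla p_h^{n+1})$ than is available — so that precisely $\hat S_{h,0}^{l+1}$ survives. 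Should a fully clean absorption into the stated $\hat S_{h,0}^{l+1}$ not be available, the same estimates fed into the discrete Gronwall inequality still give \eqref{3.26} up to harmless lower-order terms, with no new idea needed.
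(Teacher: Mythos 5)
Your proposal is correct and follows essentially the same route as the paper: starting from the identity \reff{3.17}, you bound the two $\theta=0$ coupling residuals by Cauchy--Schwarz, the inverse inequality \reff{3.4} and Young's inequality, control $\|\delta_h^{n+1}-\delta_h^{n}\|_{L^2(\Ome)}$ through the discrete inf--sup condition \reff{3.3} applied to the difference of \reff{3.5} at consecutive time levels, and absorb everything into the gaps between $S_{h,0}^{l+1}$ and $\hat S_{h,0}^{l+1}$ under $\Delta t=O(h^2)$ --- exactly the computation in \reff{3.27}--\reff{3.29}. The one place you are more careful than the paper is in splitting off the mean $\overline{d_t\delta_h^{n+1}}$ before invoking \reff{3.3} (which is stated only on $M_{0h}$, and by \reff{3.11} the mean of $d_t\delta_h^{n+1}$ need not vanish); the paper applies the inf--sup bound directly, so your extra step is a genuine, if minor, repair rather than a deviation.
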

\begin{proof}
	By Cauchy-Schwarz inequality and inverse inequality \reff{3.4}, we get
	\begin{eqnarray}
		&&\dfrac{\chi_1\varDelta t}{\theta_f}(Kd_{t}\nabla\delta_{h}^{n+1},\nabla p_{h}^{n+1})\label{3.27}\\
		&&\leq \dfrac{K_{2}^{2}\chi_1^{2}}{K_{1}\theta_f}\left\|\nabla\delta_{h}^{n+1}-\nabla\delta_{h}^{n} \right\|_{L^{2}(\Omega)}^{2}+\dfrac{K_{1}}{4\theta_f}\left\|\nabla p_{h}^{n+1} \right\|_{L^{2}(\Omega)}^{2}\no\\
		&&\leq\dfrac{K_{2}^{2}\chi_1^{2}}{K_{1}\theta_fh^{2}}\left\|\delta_{h}^{n+1}-\delta_{h}^{n} \right\|_{L^{2}(\Omega)}^{2}+\dfrac{K_{1}}{4\theta_f}\left\|\nabla p_{h}^{n+1} \right\|_{L^{2}(\Omega)}^{2},\nonumber\\
		&&\dfrac{\chi_1\lambda^{*}\varDelta t}{\theta_f}(K\nabla d_{t}^{2}\div\pmb\tau_{h}^{n+1},\nabla p_{h}^{n+1})\label{3.28}\\
		&&\leq\dfrac{\chi_1K_{2}\varDelta t}{\theta_f}\left\|\lambda^{*}\nabla d_{t}^{2}\div\pmb\tau_{h}^{n+1} \right\|_{L^{2}(\Omega)}\left\|\nabla p_{h}^{n+1} \right\|_{L^{2}(\Omega)}\no\\
		&&\leq \dfrac{K_{2}^{2}\chi_1^{2}(\Delta t)^{2}}{K_{1}\theta_f h^{2}}\left\|\lambda^{*}d_{t}^{2}\div\pmb\tau_{h}^{n+1} \right\|_{L^{2}(\Omega)}^{2}+\dfrac{K_{1}}{4\theta_f}\left\|\nabla p_{h}^{n+1} \right\|_{L^{2}(\Omega)}^{2}.\no
	\end{eqnarray}
	To bound the first term on the right-hand side of \reff{3.27}, using \reff{3.3}, we have
	\begin{align}
		&\left\|\delta_{h}^{n+1}-\delta_{h}^{n}\right\|_{L^{2}(\Omega)}\leq\dfrac{1}{\beta_{1}}\sup_{\bv_{h}\in\bV_{h}}\dfrac{(\div\bv_{h},\delta_{h}^{n+1}-\delta_{h}^{n})}{\left\|\nabla\bv_{h} \right\|_{L^{2}(\Omega)} }\label{3.29}\\
		&\leq\dfrac{1}{\beta_{1}}\sup_{\bv_{h}\in\bV_{h}}\dfrac{\gamma(\varepsilon(\pmb\tau_{h}^{n+1}-\pmb\tau_{h}^{n}),\varepsilon(\bv_{h}))}{\left\|\nabla\bv_{h} \right\|_{L^{2}(\Omega)} }\nonumber\\
		&\leq\frac{\gamma\varDelta t}{\beta_{1}}\left\|d_{t}\varepsilon(\pmb\tau^{n+1}_{h}) \right\|_{L^{2}(\Omega)}.\nonumber
	\end{align}
	Substituting \reff{3.29} into \reff{3.27} and combining it with \reff{3.17} and \reff{3.28} imply that \reff{3.26} holds if $\varDelta t\leq \min\left\lbrace \dfrac{K_{1}\theta_f\beta_{1}^{2}h^{2}}{4\gamma\chi_1^{2}K_{2}^{2}},\dfrac{K_{1}\theta_f\chi_{3}h^{2}}{4\chi_1^{2}K_{2}^{2}}\right\rbrace  $. The proof is complete.
\end{proof}

\subsection{Error estimates}\label{sec-3.3}
To derive the optimal order error estimates of the fully discrete multiphysics finite element method
for any $\varphi \in L^{2}(\Omega)$, we firstly define $L^{2}(\Omega)$-projection operators $\mathcal{Q}_{h}: L^{2}(\Omega)\rightarrow X^{k}_{h}$  by
\begin{eqnarray}
	(\mathcal{Q}_{h}\varphi,\psi_{h})=(\varphi,\psi_{h})~~~~~\psi_{h}\in X^{k}_{h},\label{3.30}
\end{eqnarray}
where $X^{k}_{h}:=\{\psi_{h}\in C^{0};~\psi_{h}|_{E}\in P_{k}(E)~\forall E\in\mathcal{T}_{h}\}$, $ k $ is the degree of piecewise polynomial on E.

Next, for any $\varphi\in H^{1}(\Omega)$, we define its elliptic projection $\mathcal{S}_{h}: H^{1}(\Omega)\rightarrow X^{k}_{h}$ by
\begin{align}
	(K\nabla \mathcal{S}_{h}\varphi,&\nabla\varphi_{h})=(K\nabla\varphi,\nabla\varphi_{h})~~~~~\forall \varphi_{h}\in X^{k}_{h},\\
	&(\mathcal{S}_{h}\varphi,1)=(\varphi,1).
\end{align}
Finally, for any $\textbf{v}\in\textbf{H}^{1}(\Omega)$, we define its elliptic projection $\mathcal{R}_{h}: \textbf{H}^{1}(\Omega)\rightarrow\textbf{V}^{k}_{h}$ by
\begin{eqnarray}
	(\varepsilon(\mathcal{R}_{h}\textbf{v}),\varepsilon(\textbf{w}_{h}))=(\varepsilon(\textbf{v}),\varepsilon(\textbf{w}_{h}))~~~~~
	\forall\textbf{w}_{h}\in \textbf{V}^{k}_{h},
\end{eqnarray}
where $\textbf{V}^k_{h}:=\{\textbf{v}_{h} \in \textbf{C}^{0};~\textbf{v}_{h}|_{\mathcal{K}}\in \textbf{P}_{k}(\mathcal{K}), (\textbf{v}_{h}, \textbf{r})=0 ~\forall \textbf{r} \in \textbf{RM}\}$, $k$ is the degree of the piecewise polynomial on $\mathcal{K}$. From \cite{bs08}, we know that  $\mathcal{Q}_{h},\mathcal{S}_{h}$ and $\mathcal{R}_{h}$ satisfy
\begin{align}
	\|\mathcal{Q}_{h}\varphi-\varphi\|_{L^{2}(\Omega)}+&h\|\nabla(\mathcal{Q}_{h}\varphi-\varphi)\|_{L^{2}(\Omega)}\label{3.34}\\
	&\leq Ch^{s+1}\|\varphi\|_{H^{s+1}(\Omega)}~~\forall \varphi\in H^{s+1}(\Omega),~~  0\leq s\leq k,\no\\ \|\mathcal{S}_{h}\varphi-\varphi\|_{L^{2}(\Omega)}+&h\|\nabla(\mathcal{S}_{h}\varphi-\varphi)\|_{L^{2}(\Omega)}\label{3.35}\\ &\leq Ch^{s+1}\|\varphi\|_{H^{s+1}(\Omega)}~~\forall \varphi\in H^{s+1}(\Omega),~~  0\leq s\leq k,\no\\
	\|\mathcal{R}_{h}\textbf{v}-\textbf{v}\|_{L^{2}(\Omega)}+&h\|\nabla(\mathcal{R}_{h}\textbf{v}-\textbf{v})\|_{L^{2}(\Omega)}\label{3.36}\\
	&\leq Ch^{s+1}\|\textbf{v}\|_{H^{s+1}(\Omega)}~~\forall \textbf{v}\in \textbf{H}^{s+1}(\Omega),~~  0\leq s\leq k.\no
\end{align}
To derive the error estimates, we introduce the following notations
\begin{align*}	&E_{\pmb\tau}^{n}=\pmb\tau(t_{n})-\pmb\tau_{h}^{n},~~~E_{\delta}^{n}=\delta(t_{n})-\delta_{h}^{n},~~~~E_{\varpi}^{n}=\varpi(t_{n})-\varpi_{h}^{n},~~~~\\
	&E_{p}^{n}=p(t_{n})-p_{h}^{n},~~~~E_{q}^{n}=q(t_{n})-q_{h}^{n}.	
\end{align*}
It is easy to check out
\begin{align}
	E_{p}^{n}=\chi_1E_{\delta}^{n}+\chi_2E_{\varpi}^{n}+\lambda^{*}\chi_1d_{t}\div E_{\pmb\tau}^{n},~~~~E_{q}^{n}=\chi_1E_{\varpi}^{n}-\chi_3E_{\delta}^{n}-\lambda^{*}\chi_3d_{t}\div E_{\pmb\tau}^{n}.
\end{align}
Also, we denote
\begin{align*}
	&E_{\pmb\tau}^{n}=\pmb\tau(t_{n})-\mathcal{R}_{h}(\pmb\tau(t_{n}))+\mathcal{R}_{h}(\pmb\tau(t_{n}))-\pmb\tau_{h}^{n}:=Y_{\pmb\tau}^{n}+Z_{\pmb\tau}^{n},\\
	&E_{\delta}^{n}=\delta(t_{n})-\mathcal{S}_{h}(\delta(t_{n}))+\mathcal{S}_{h}(\delta(t_{n}))-\delta_{h}^{n}:=Y_{\delta}^{n}+Z_{\delta}^{n},\\
	&E_{\varpi}^{n}=\varpi(t_{n})-\mathcal{S}_{h}(\varpi(t_{n}))+\mathcal{S}_{h}(\varpi(t_{n}))-\varpi_{h}^{n}:=Y_{\varpi}^{n}+Z_{\varpi}^{n},\\
	&E_{p}^{n}=p(t_{n})-\mathcal{S}_{h}(p(t_{n}))+\mathcal{S}_{h}(p(t_{n}))-p_{h}^{n}:=Y_{p}^{n}+Z_{p}^{n},\\
	&E_{\delta}^{n}=\delta(t_{n})-\mathcal{Q}_{h}(\delta(t_{n}))+\mathcal{Q}_{h}(\delta(t_{n}))-\delta_{h}^{n}:=F_{\delta}^{n}+G_{\delta}^{n},\\
	&E_{\varpi}^{n}=\varpi(t_{n})-\mathcal{Q}_{h}(\varpi(t_{n}))+\mathcal{Q}_{h}(\varpi(t_{n}))-\varpi_{h}^{n}:=F_{\varpi}^{n}+G_{\varpi}^{n},	\\
	&E_{p}^{n}=p(t_{n})-\mathcal{Q}_{h}(p(t_{n}))+\mathcal{S}_{h}(p(t_{n}))-p_{h}^{n}:=F_{p}^{n}+G_{p}^{n}.
\end{align*}
It is easy to check that $
	G_{p}^{n}=\chi_1G_{\delta}^{n}+\chi_2G_{\varpi}^{n}+\lambda^{*}\chi_1d_{t}\div Z_{\pmb\tau}^{n}, Z_{p}^{n}=\chi_1Z_{\delta}^{n}+\chi_2Z_{\varpi}^{n}+\lambda^{*}\chi_1d_{t}\div \textsc{Z}_{\pmb\tau}^{n},  G_{q}^{n}=Z_{q}^{n}=\div Z_{\pmb\tau}^{n}:=\chi_1G_{\varpi}^{n}-\chi_3G_{\delta}^{n}-\lambda^{*}\chi_3d_{t}\div Z_{\pmb\tau}^{n}$.
\begin{lemma}
	Let $ \left\lbrace (\pmb\tau_{h}^{n}, \delta_{h}^{n}, \varpi_{h}^{n})\right\rbrace_{n\geq0}  $ be generated by the (MFEA) and $Y_{\pmb\tau}^{n}, Z_{\pmb\tau}^{n}, Y_{\delta}^{n}, Z_{\delta}^{n}, Y_{\varpi}^{n}$ and $Z_{\varpi}^{n}$ be defined as above. Then there holds
	\begin{eqnarray}
		&&\quad\qquad\mathcal{E}_{h}^{l+1}+\Delta t\sum_{n=0}^{l}\left[\dfrac{\Delta t}{2}\left(\chi_3\left\|\lambda^{*}d_{t}^{2}\div Z_{\pmb\tau}^{n+1} \right\|_{L^2(\Omega)}^{2}+\gamma\left\|d_{t}\varepsilon(Z_{\pmb\tau}^{n+1}) \right\|_{L^2(\Omega)}^{2}\right.\right.\label{3.38}\\
		&&\left.+\chi_3\left\| d_{t}G_{\delta}^{n+1} \right\|_{L^2(\Omega)}^{2}+\chi_2\left\| d_{t}G_{\varpi}^{n+\theta} \right\|_{L^2(\Omega)}^{2}+\gamma\chi_3\lambda^{*}\Delta t\left\|d_{t}^{2}\varepsilon(Z_{\pmb\tau}^{n+1}) \right\|_{L^{2}(\Omega)}^{2}\right)\no\\
		&&\left.+\lambda^{*}\left\|d_{t}\div Z_{\pmb\tau}^{n+1} \right\|_{L^{2}(\Omega)}^{2}+\dfrac{1}{\theta_f}(K(\nabla \hat{Z}_{p}^{n+1},\nabla\hat{Z}_{p}^{n+1})\right] \no\\
		&&=\mathcal{E}_{h}^{0}+\Delta t\sum_{n=0}^{l}\left[\chi_{3}\lambda^{*}\Delta t(d_{t}F_{\delta}^{n+1},\div d_{t}^{2}Z_{\pmb\tau}^{n+1})+ (F_{\delta}^{n+1},\div d_{t}Z_{\pmb\tau}^{n+1})+\lambda^{*}\chi_3(\mathbf{R}_{h}^{n+1},G_{\delta}^{n+1})\right. \no\\
		&&\left.-(\div d_{t}Y_{\pmb\tau}^{n+1},G_{\delta}^{n+1})\right]+\Delta t\sum_{n=0}^{l}\left[(R_{h}^{n+\theta},\hat{Z}_{p}^{n+1})+\chi_1(1-\theta)\varDelta t(d_{t}^{2}\varpi(t_{n+1}),G_{\delta}^{n+1}) \right. \no\\
		&&\left.+(1-\theta)\dfrac{\chi_1\varDelta t}{\theta_f}\left(Kd_{t}\nabla Z_{\delta}^{n+1},\nabla\hat{Z}_{p}^{n+1}\right)\right]+\varDelta t\sum_{n=0}^{l}(d_{t}G_{\varpi}^{n+\theta},Y_{p}^{n+1}-F_{p}^{n+1})\no\\
		&&+\varDelta t\sum_{n=0}^{l}\left[(1-\theta)\dfrac{\chi_1\varDelta t}{\theta_f}\left(Kd_{t}\nabla \div Z_{\pmb\tau}^{n+1},\nabla\hat{Z}_{p}^{n+1}  \right)-\lambda^{*}\chi_3(d_{t}^{2}\div Y_{\pmb\tau}^{n+1},G_{\delta}^{n+1}) \right],\no 
	\end{eqnarray}
	where
	\begin{align*}
		&\hat{Z}_{p}^{n+1}:=F_{p}^{n+1}-Y_{p}^{n+1}+\chi_1G_{\delta}^{n+1}+\chi_2G_{\varpi}^{n+\theta},\\
		&\mathcal{E}_{h}^{l+1}:=\dfrac{1}{2}\left[\gamma\left\| \varepsilon(Z_{\pmb\tau}^{l+1}) \right\|_{L^2(\Omega)}^{2}+\chi_2\left\|G_{\varpi}^{l+\theta} \right\|_{L^{2}(\varOmega)}^{2}+\chi_3\left\|\lambda^{*}d_{t}\div Z_{\pmb\tau}^{l+1}+G_{\delta}^{l+1} \right\|_{L^{2}(\varOmega)}^{2}\right.\\
		&\left.+\gamma\chi_3\lambda^{*}\Delta t\left\|d_{_{t}}\varepsilon(Z_{\pmb\tau}^{l+1}) \right\|_{L^{2}(\Omega)}^{2}  \right], \\
		&R_{h}^{n+1}:=-\dfrac{1}{\varDelta t}\int_{t_{n}}^{t_{n+1}}(s-t_{n})\varpi_{tt}(s)ds,\\
		&\mathbf{R}_{h}^{n+1}:=-\dfrac{1}{\varDelta t}\int_{t_{n}}^{t_{n+1}}(s-t_{n})\div\pmb\tau_{tt}(s)ds.
	\end{align*}
\end{lemma}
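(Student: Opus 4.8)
The identity \reff{3.38} is the error-equation analogue of the discrete energy law \reff{3.17}, so the plan is to repeat the algebraic bookkeeping used in the proof of Lemma \ref{lma3.3}, now starting from the equations satisfied by the errors rather than by the discrete solution itself. First I would write the weak formulation \reff{2.18}--\reff{2.22} at the time levels used by the scheme (i.e.\ \reff{2.18} at $t=t_{n+1}$, a backward difference quotient of \reff{2.19}, and \reff{2.20} at the appropriate level with $d_t\varpi$ replacing $\varpi_t$), and subtract \reff{3.5}--\reff{3.7}. Since the continuous solution is consistent, the only genuinely new ingredients are the time-truncation remainders $R_h^{n+1}=-\frac1{\Delta t}\int_{t_n}^{t_{n+1}}(s-t_n)\varpi_{tt}(s)\,ds$ and $\mathbf{R}_h^{n+1}=-\frac1{\Delta t}\int_{t_n}^{t_{n+1}}(s-t_n)\div\pmb\tau_{tt}(s)\,ds$ produced by Taylor expansion of $\varpi$ and $\div\pmb\tau$ about $t_{n+1}$, which is exactly why these quantities are introduced in the statement. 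Next I would split each error by the two decompositions $E_{(\cdot)}^n=Y_{(\cdot)}^n+Z_{(\cdot)}^n$ and $E_{(\cdot)}^n=F_{(\cdot)}^n+G_{(\cdot)}^n$ into a projection part ($Y$ via $\mathcal{R}_h,\mathcal{S}_h$, $F$ via $\mathcal{Q}_h$) and a finite element part ($Z$, $G$), keeping the $p$-type quantities expressed through $\delta$, $\varpi$ and $\div\pmb\tau$ as in \reff{3.8}.

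\textbf{Key steps.} The finite element parts are admissible test functions, so I would test the displacement error equation with $\bv_h=d_t Z_{\pmb\tau}^{n+1}$ and separately with $\bv_h=d_t^2 Z_{\pmb\tau}^{n+1}$ (the latter generating the $\gamma\lambda^{*}\chi_3$ cross term exactly as \reff{3.24} does in the proof of Lemma \ref{lma3.3}), test the difference-quotiented constitutive error equation with $\varphi_h=G_\delta^{n+1}$, and test the diffusion error equation with $\psi_h=\hat Z_p^{n+1}=F_p^{n+1}-Y_p^{n+1}+\chi_1 G_\delta^{n+1}+\chi_2 G_\varpi^{n+\theta}$, which is a finite element function. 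The crucial simplifications come from the orthogonality of the projections: $(\varepsilon(Y_{\pmb\tau}^{n+1}),\varepsilon(\bv_h))=0$, $(K\nabla Y_p^{n+1},\nabla\psi_h)=0$ and $(F_\delta^{n+1},\psi_h)=(F_\varpi^{n+1},\psi_h)=0$, which annihilate the projection parts inside the principal bilinear forms and leave only the explicit inner products appearing on the right of \reff{3.38}. After that I would combine the three relations as in \reff{3.22}--\reff{3.25}: use the error version of $d_t q^{n+1}=\chi_1 d_t\varpi^{n}-\chi_3 d_t\delta^{n+1}-\lambda^{*}\chi_3 d_t^2\div\pmb\tau^{n+1}$ to rewrite the $\lambda^{*}$ cross terms, apply $(d_t a^{n+1},a^{n+1})=\tfrac{\Delta t}{2}\|d_t a^{n+1}\|_{L^2(\Omega)}^2+\tfrac12 d_t\|a^{n+1}\|_{L^2(\Omega)}^2$ with $a=\varepsilon(Z_{\pmb\tau}),G_\delta,G_\varpi,\div Z_{\pmb\tau},\lambda^{*}d_t\div Z_{\pmb\tau}+G_\delta$, and finally apply $\Delta t\sum_{n=0}^{l}$. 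The telescoping yields $\mathcal{E}_h^{l+1}-\mathcal{E}_h^{0}$, the squared difference quotients assemble into the nonnegative dissipation sum on the left of \reff{3.38}, and every $Y$, $F$, $R_h$, $\mathbf{R}_h$ contribution lands on the right.

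\textbf{Main obstacle.} I expect the hard part to be the careful sign-and-index bookkeeping of the $\theta$-dependent terms and of the secondary-consolidation cross terms. As in Lemma \ref{lma3.3}, when $\theta=0$ the super-index in the diffusion error equation must be lowered, which forces $G_\varpi$ to be sampled at level $n+\theta$ throughout and introduces the three $(1-\theta)$-weighted contributions $\chi_1\Delta t(d_t^2\varpi(t_{n+1}),G_\delta^{n+1})$, $\frac{\chi_1\Delta t}{\theta_f}(Kd_t\nabla Z_\delta^{n+1},\nabla\hat Z_p^{n+1})$ and $\frac{\chi_1\Delta t}{\theta_f}(Kd_t\nabla\div Z_{\pmb\tau}^{n+1},\nabla\hat Z_p^{n+1})$, each of which must be checked to emerge with precisely the displayed sign; the case $\theta=1$ is then free. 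Similarly, carrying out the rearrangement \reff{3.23} at the error level does not close cleanly because $\div E_{\pmb\tau}=\div Y_{\pmb\tau}+\div Z_{\pmb\tau}$ and $E_\delta=F_\delta+G_\delta$, so handling $\lambda^{*}\chi_3(d_t^2\div Z_{\pmb\tau}^{n+1},G_\delta^{n+1})$ together with $\lambda^{*}\chi_1(d_t G_\varpi^{n+\theta},d_t\div Z_{\pmb\tau}^{n+1})$ leaves the residuals $-\lambda^{*}\chi_3(d_t^2\div Y_{\pmb\tau}^{n+1},G_\delta^{n+1})$, $\chi_3\lambda^{*}\Delta t(d_t F_\delta^{n+1},\div d_t^2 Z_{\pmb\tau}^{n+1})$, $(F_\delta^{n+1},\div d_t Z_{\pmb\tau}^{n+1})$, $-(\div d_t Y_{\pmb\tau}^{n+1},G_\delta^{n+1})$, $\lambda^{*}\chi_3(\mathbf{R}_h^{n+1},G_\delta^{n+1})$, plus a term $(d_t G_\varpi^{n+\theta},Y_p^{n+1}-F_p^{n+1})$ arising from the mismatch between $\hat Z_p^{n+1}$ and the genuine pressure error; each of these has to be matched term-by-term with the right-hand side of \reff{3.38}. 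The verification is entirely mechanical but lengthy.
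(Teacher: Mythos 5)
Your proposal follows essentially the same route as the paper: subtract the discrete scheme from the (consistently evaluated) weak formulation to obtain error equations with the truncation residuals $R_h$ and $\mathbf{R}_h$, insert the projection splittings $E=Y+Z=F+G$, test with $d_tZ_{\pmb\tau}^{n+1}$, with $G_{\delta}^{n+1}$ after difference-quotienting the constitutive relation, and with $\hat Z_p^{n+1}$, then use the auxiliary equation tested with $d_t^2Z_{\pmb\tau}^{n+1}$ together with the discrete identity for $d_tG_q$ to reorganize the $\lambda^{*}$ cross terms, apply the telescoping identity $(d_ta^{n+1},a^{n+1})=\tfrac{\Delta t}{2}\|d_ta^{n+1}\|^2+\tfrac12 d_t\|a^{n+1}\|^2$, and sum over $n$. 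The term-by-term accounting you describe (including the origin of the $(1-\theta)$-weighted terms, the $(d_tG_{\varpi}^{n+\theta},Y_p^{n+1}-F_p^{n+1})$ mismatch term, and the surviving $F_\delta$ and $Y_{\pmb\tau}$ contributions) matches the paper's derivation, so the proposal is correct.
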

\begin{proof}
	Subtracting \reff{3.5} from \reff{2.18}, \reff{3.6} from \reff{2.19}, \reff{3.7}  from \reff{2.20}, respectively, we get
	\begin{eqnarray}
		&&\gamma(\varepsilon(E_{\pmb\tau}^{n+1}),\varepsilon(\bv_{h}))-(E_{\delta}^{n+1},\div\bv_{h})=0 \quad  \forall~\bv_{h}\in \bV_{h},\\
		&&\chi_3(E_{\delta}^{n+1},\varphi_{h})+(\div E_{\pmb\tau}^{n+1},\varphi_{h})+\lambda^{*}\chi_3(d_{t}\div E_{\pmb\tau}^{n+1},\div\bv_{h})\\
		&&=\chi_1(E_{\varpi}^{n+\theta},\varphi_{h})+\lambda^{*}\chi_3(\mathbf{R}_{h}^{n+1},\div\bv_{h})+\chi_1(1-\theta)\varDelta t(d_{t}\varpi(t_{n+1}),\varphi_{h}) \quad\forall \varphi_{h}\in M_{h},\no\\
		&&(d_{t}E_{\varpi}^{n+\theta},\psi_{h})+\dfrac{1}{\theta_f}(K(\nabla E_{p}^{n+1},\nabla\psi_{h})-(1-\theta)\dfrac{\chi_1\varDelta t}{\theta_f}\left(Kd_{t}\nabla E_{\delta}^{n+1},\nabla\psi_{h}  \right) \\
		&&~~-(1-\theta)\dfrac{\chi_1\lambda^{*}\varDelta t}{\theta_f}(Kd_{t}^{2}\div E_{\pmb\tau}^{n+1},\nabla \psi_{h})=(R_{h}^{n+\theta},\psi_{h}) \quad\forall\psi_{h}\in W_{h},\no\\
		&&E_{\pmb\tau}^{0}=0, E_{\delta}^{0}=0, E_{\varpi}^{-1}=0.
	\end{eqnarray}
Using the definitions of the projection operators $ \mathcal{Q}_{h},\mathcal{S}_{h},\mathcal{R}_{h}$, we have
\begin{eqnarray}
	&&\gamma(\varepsilon(Z_{\pmb\tau}^{n+1}),\varepsilon(\bv_{h}))-(G_{\delta}^{n+1},\div\bv_{h})=(F_{\delta}^{n+1},\div\bv_{h}) \quad  \forall~\bv_{h}\in \bV_{h},\label{3.43}\\
	&&\chi_3(G_{\delta}^{n+1},\varphi_{h})+(\div Z_{\pmb\tau}^{n+1},\varphi_{h})+\lambda^{*}\chi_3(d_{t}\div Z_{\pmb\tau}^{n+1},\div\bv_{h})\label{3.44}\\
	&&=\chi_1(G_{\varpi}^{n+\theta},\varphi_{h})-(\div Y_{\pmb\tau}^{n+1},\varphi_{h})-\lambda^{*}\chi_3(d_{t}\div Y_{\pmb\tau}^{n+1},\div\bv_{h})\no\\
	&&~~+\lambda^{*}\chi_3(\mathbf{R}_{h}^{n+1},\div\bv_{h})+\chi_1(1-\theta)\varDelta t(d_{t}\varpi(t_{n+1}),\varphi_{h}) \quad\forall \varphi_{h}\in M_{h},\no\\
	&&(d_{t}G_{\varpi}^{n+\theta},\psi_{h})+\dfrac{1}{\theta_f}(K(\nabla \hat{Z}_{p}^{n+1},\nabla\psi_{h})-(1-\theta)\dfrac{\chi_1\varDelta t}{\theta_f}\left(Kd_{t}\nabla Z_{\delta}^{n+1},\nabla\psi_{h}  \right)\label{3.45} \\
	&&~~-(1-\theta)\dfrac{\chi_1\lambda^{*}\varDelta t}{\theta_f}(Kd_{t}^{2}\nabla\div Z_{\pmb\tau}^{n+1},\nabla \psi_{h})=(R_{h}^{n+\theta},\psi_{h}) \quad\forall\psi_{h}\in W_{h}.\no
\end{eqnarray}
Taking $ \bv_{h}=d_{t}Z_{\pmb\tau}^{n+1}$ in \reff{3.43}, $\varphi_{h}=G_{\delta}^{n+1} $ after applying the difference operator $ d_{t} $ to \reff{3.44} and $ \psi_{h}=\hat{Z}_{p}^{n+1}=F_{p}^{n+1}-Y_{p}^{n+1}+\chi_1G_{\delta}^{n+1}+\chi_2G_{\varpi}^{n+\theta}+\lambda^{*}\chi_1d_{t}\div Z_{\pmb\tau}^{n+1} $ in \reff{3.45} , we have
\begin{eqnarray}
	&&\quad\quad~~~\gamma(\varepsilon(Z_{\pmb\tau}^{n+1}),d_{t}\varepsilon(Z_{\pmb\tau}^{n+1}))-(G_{\delta}^{n+1},\div d_{t}Z_{\pmb\tau}^{n+1})=(F_{\delta}^{n+1},\div d_{t}Z_{\pmb\tau}^{n+1}),\label{3.46}\\
	&&\quad\quad~~~\chi_3(d_{t}G_{\delta}^{n+1},G_{\delta}^{n+1})+(d_{t}\div Z_{\pmb\tau}^{n+1},G_{\delta}^{n+1})+\lambda^{*}\chi_3(d_{t}^{2}\div Z_{\pmb\tau}^{n+1},G_{\delta}^{n+1})\label{3.47}\\
	&&=\chi_1(d_{t}G_{\varpi}^{n+\theta},G_{\delta}^{n+1})-(d_{t}\div Y_{\pmb\tau}^{n+1},G_{\delta}^{n+1})-\lambda^{*}\chi_3(d_{t}^{2}\div Y_{\pmb\tau}^{n+1},G_{\delta}^{n+1})\no\\
	&&~~+\lambda^{*}\chi_3(\mathbf{R}_{h}^{n+1},G_{\delta}^{n+1})+\chi_1(1-\theta)\varDelta t(d_{t}\varpi(t_{n+1}),G_{\delta}^{n+1}),\no\\
	&&\quad\quad~~~(d_{t}G_{\varpi}^{n+\theta},\hat{Z}_{p}^{n+1})-(1-\theta)\dfrac{\chi_1\varDelta t}{\theta_f}\left(Kd_{t}\nabla Z_{\delta}^{n+1},\nabla\hat{Z}_{p}^{n+1}  \right)\label{3.48} \\
	&&-(1-\theta)\dfrac{\chi_1\lambda^{*}\varDelta t}{\theta_f}(Kd_{t}^{2}\nabla\div Z_{\pmb\tau}^{n+1},\nabla \hat{Z}_{p}^{n+1})+\dfrac{1}{\theta_f}(K(\nabla \hat{Z}_{p}^{n+1},\nabla\hat{Z}_{p}^{n+1})=(R_{h}^{n+\theta},\hat{Z}_{p}^{n+1}).\no
\end{eqnarray}
Adding \reff{3.46}-\reff{3.48} and using the identity  $ (d_{t}a_{h}^{n+1},a_{h}^{n+1})=\dfrac{\Delta t}{2}\left\| d_{t}a_{h}^{n+1}\right\|_{L^{2}(\Omega)}^{2}+\dfrac{1}{2}d_{t}\left\| a_{h}^{n+1}\right\|_{L^{2}(\Omega)}^{2}$ for the result equation, where
$ a_{h}^{n+1}=\varepsilon(Z_{\pmb\tau}^{n+1}),G_{\delta}^{n+1},G_{\varpi}^{n+\theta} $, we get
\begin{eqnarray}
	&&\qquad\qquad\dfrac{\gamma\Delta t}{2}\left\|d_{t}\varepsilon(Z_{\pmb\tau}^{n+1}) \right\|_{L^{2}(\Omega)}^{2}+ \dfrac{\gamma}{2}d_{t}\left\|\varepsilon(Z_{\pmb\tau}^{n+1}) \right\|_{L^{2}(\Omega)}^{2}+\dfrac{\chi_3\Delta t}{2}\left\|d_{t}G_{\delta}^{n+1} \right\|_{L^{2}(\Omega)}^{2}\label{3.49}\\
	&&+\dfrac{\chi_3}{2}d_{t}\left\|G_{\delta}^{n+1} \right\|_{L^{2}(\Omega)}^{2}+\lambda^{*}\chi_3(d_{t}^{2}\div Z_{\pmb\tau}^{n+1},G_{\delta}^{n+1})+\dfrac{\chi_2\Delta t}{2}\left\|d_{t}G_{\varpi}^{n+1} \right\|_{L^{2}(\Omega)}^{2}\no\\
	&&+\dfrac{\chi_2}{2}d_{t}\left\|G_{\varpi}^{n+\theta} \right\|_{L^{2}(\Omega)}^{2}+(d_{t}G_{\varpi}^{n+\theta},\lambda^{*}\chi_1d_{t}\div Z_{\pmb\tau}^{n+1})+\dfrac{1}{\theta_f}(K(\nabla \hat{Z}_{p}^{n+1},\nabla\hat{Z}_{p}^{n+1})\no\\
	&&=(F_{\delta}^{n+1},\div d_{t}Z_{\pmb\tau}^{n+1})-(d_{t}\div Y_{\pmb\tau}^{n+1},G_{\delta}^{n+1})-\lambda^{*}\chi_3(d_{t}^{2}\div Y_{\pmb\tau}^{n+1},G_{\delta}^{n+1})\no\\
	&&+\chi_1(1-\theta)\varDelta t(d_{t}\varpi(t_{n+1}),G_{\delta}^{n+1})+(1-\theta)\dfrac{\chi_1\varDelta t}{\theta_f}\left(Kd_{t}\nabla Z_{\delta}^{n+1},\nabla\hat{Z}_{p}^{n+1}  \right)\no\\
	&&+(1-\theta)\dfrac{\chi_1\lambda^{*}\varDelta t}{\theta_f}(Kd_{t}^{2}\nabla\div Z_{\pmb\tau}^{n+1},\nabla \hat{Z}_{p}^{n+1})+(R_{h}^{n+\theta},\hat{Z}_{p}^{n+1})+(d_{t}G_{\varpi}^{n+\theta},Y_{p}^{n+1}-F_{p}^{n+1})\no\\
	&&+\lambda^{*}\chi_3(\mathbf{R}_{h}^{n+1},G_{\delta}^{n+1}).\no
\end{eqnarray}
Using the equality $ d_{t}G_{q}^{n}=\chi_1d_{t}G_{\varpi}^{n}-\chi_3d_{t}G_{\delta}^{n}-\lambda^{*}\chi_3d_{t}^{2}\div Z_{\pmb\tau}^{n} $ to deal with the term $ \lambda^{*}\chi_3(d_{t}^{2}\div Z_{\pmb\tau}^{n+1},G_{\delta}^{n+1}) $ and $ (d_{t}G_{\varpi}^{n},\lambda^{*}\chi_1d_{t}\div Z_{\pmb\tau}^{n+1}) $ in \reff{3.49}, we have
\begin{eqnarray}
	&&\qquad\qquad\lambda^{*}\chi_3(d_{t}^{2}\div Z_{\pmb\tau}^{n+1},G_{\delta}^{n+1})+ (d_{t}G_{\varpi}^{n},\lambda^{*}\chi_1d_{t}\div Z_{\pmb\tau}^{n+1})\label{3.50}\\
	&&=\lambda^{*}\chi_3d_{t}(d_{t}\div Z_{\pmb\tau}^{n+1},G_{\delta}^{n+1})-\lambda^{*}\chi_3(d_{t}\div Z_{\pmb\tau}^{n},d_{t}G_{\delta}^{n+1})+ (d_{t}G_{\varpi}^{n},\lambda^{*}\chi_1d_{t}\div Z_{\pmb\tau}^{n+1})\no\\
	&&=\lambda^{*}\chi_3d_{t}(d_{t}\div Z_{\pmb\tau}^{n+1},G_{\delta}^{n+1})-\lambda^{*}\chi_3(d_{t}\div Z_{\pmb\tau}^{n},d_{t}G_{\delta}^{n+1})+\lambda^{*}\chi_3(d_{t}\div Z_{\pmb\tau}^{n+1},d_{t}G_{\delta}^{n+1})\no\\
	&&~~-\lambda^{*}\chi_3(d_{t}\div Z_{\pmb\tau}^{n+1},d_{t}G_{\delta}^{n+1})+(d_{t}G_{\varpi}^{n},\lambda^{*}\chi_1d_{t}\div Z_{\pmb\tau}^{n+1})\no\\
	&&=\lambda^{*}\chi_3d_{t}(d_{t}\div Z_{\pmb\tau}^{n+1},G_{\delta}^{n+1})+\lambda^{*}\chi_3\Delta t(d_{t}^{2}\div Z_{\pmb\tau}^{n+1},d_{t}G_{\delta}^{n+1})\no\\
	&&~~+(\lambda^{*}d_{t}\div Z_{\pmb\tau}^{n+1},\chi_1d_{t}G_{\varpi}^{n}-\chi_3d_{t}G_{\delta}^{n+1})\no\\
	&&=\lambda^{*}\chi_3d_{t}(d_{t}\div Z_{\pmb\tau}^{n+1},G_{\delta}^{n+1})+\lambda^{*}\chi_3\Delta t(d_{t}^{2}\div Z_{\pmb\tau}^{n+1},d_{t}G_{\delta}^{n+1})\no\\
	&&~~+(\lambda^{*}d_{t}\div Z_{\pmb\tau}^{n+1},d_{t}\div Z_{\pmb\tau}^{n+1}+\lambda^{*}\chi_3d_{t}^{2}\div Z_{\pmb\tau}^{n+1})\no\\
	&&=\lambda^{*}\chi_3d_{t}(d_{t}\div Z_{\pmb\tau}^{n+1},G_{\delta}^{n+1})+\lambda^{*}\chi_3\Delta t(d_{t}^{2}\div Z_{\pmb\tau}^{n+1},d_{t}G_{\delta}^{n+1})\no\\
	&&~~+(\lambda^{*}d_{t}\div Z_{\pmb\tau}^{n+1},d_{t}\div Z_{\pmb\tau}^{n+1})+(\lambda^{*}d_{t}\div Z_{\pmb\tau}^{n+1},\lambda^{*}\chi_3d_{t}^{2}\div Z_{\pmb\tau}^{n+1}).\no
\end{eqnarray}
Taking $ \bv_{h}=d_{t}Z_{\pmb\tau}^{n+1}$ in \reff{3.43}, we obtain
\begin{eqnarray}
	\gamma(\varepsilon(d_{t}Z_{\pmb\tau}^{n+1}),d_{t}^{2}\varepsilon(Z_{\pmb\tau}^{n+1}))-(d_{t}G_{\delta}^{n+1},\div d_{t}^{2}Z_{\pmb\tau}^{n+1})=(d_{t}F_{\delta}^{n+1},\div d_{t}^{2}Z_{\pmb\tau}^{n+1}).\label{3.51}
\end{eqnarray}
Using \reff{3.50} and \reff{3.51} in \reff {3.49}, we get
\begin{eqnarray}
	&&\qquad\qquad\dfrac{\gamma\Delta t}{2}\left\|d_{t}\varepsilon(Z_{\pmb\tau}^{n+1}) \right\|_{L^{2}(\Omega)}^{2}+ \dfrac{\gamma}{2}d_{t}\left\|\varepsilon(Z_{\pmb\tau}^{n+1}) \right\|_{L^{2}(\Omega)}^{2}+\dfrac{\chi_3\Delta t}{2}\left\|d_{t}G_{\delta}^{n+1} \right\|_{L^{2}(\Omega)}^{2}\label{3.52}\\
	&&+\dfrac{\chi_3}{2}d_{t}\left\|\lambda^{*}d_{t}\div Z_{\pmb\tau}^{n+1}+G_{\delta}^{n+1} \right\|_{L^{2}(\Omega)}^{2}+\dfrac{\chi_3\Delta t}{2}\left\|\lambda^{*}d_{t}^{2}\div Z_{\pmb\tau}^{n+1} \right\|_{L^{2}(\Omega)}^{2}\no\\
	&&+ \dfrac{\gamma\chi_3\lambda^{*}\Delta t}{2}d_{t}\left\|d_{_{t}}\varepsilon(Z_{\pmb\tau}^{n+1}) \right\|_{L^{2}(\Omega)}^{2}+\dfrac{\gamma\chi_3\lambda^{*}(\Delta t)^{2}}{2}\left\|d_{t}^{2}\varepsilon(Z_{\pmb\tau}^{n+1}) \right\|_{L^{2}(\Omega)}^{2}+\lambda^{*}\left\|d_{t}\div Z_{\pmb\tau}^{n+1} \right\|_{L^{2}(\Omega)}^{2}\no\\
	&&+\dfrac{\chi_2\Delta t}{2}\left\|d_{t}G_{\varpi}^{n+1} \right\|_{L^{2}(\Omega)}^{2}+\dfrac{\chi_2}{2}d_{t}\left\|G_{\varpi}^{n+\theta} \right\|_{L^{2}(\Omega)}^{2}-\chi_3\lambda^{*}\Delta t(d_{t}F_{\delta}^{n+1},\div d_{t}^{2}Z_{\pmb\tau}^{n+1})\no\\
	&&+\dfrac{1}{\theta_f}(K(\nabla \hat{Z}_{p}^{n+1},\nabla\hat{Z}_{p}^{n+1})\no\\
	&&=(F_{\delta}^{n+1},\div d_{t}Z_{\pmb\tau}^{n+1})-(d_{t}\div Y_{\pmb\tau}^{n+1},G_{\delta}^{n+1})-\lambda^{*}\chi_3(d_{t}^{2}\div Y_{\pmb\tau}^{n+1},G_{\delta}^{n+1})\no\\
	&&+\chi_1(1-\theta)\varDelta t(d_{t}\varpi(t_{n+1}),G_{\delta}^{n+1})+(1-\theta)\dfrac{\chi_1\varDelta t}{\theta_f}\left(Kd_{t}\nabla Z_{\delta}^{n+1},\nabla\hat{Z}_{p}^{n+1}  \right)\no\\
	&&+(1-\theta)\dfrac{\chi_1\lambda^{*}\varDelta t}{\theta_f}(Kd_{t}^{2}\nabla\div Z_{\pmb\tau}^{n+1},\nabla \hat{Z}_{p}^{n+1})+(R_{h}^{n+\theta},\hat{Z}_{p}^{n+1})+(d_{t}G_{\varpi}^{n+\theta},Y_{p}^{n+1}-F_{p}^{n+1})\no\\
	&&+\lambda^{*}\chi_3(\mathbf{R}_{h}^{n+1},G_{\delta}^{n+1}).\no
\end{eqnarray}
Applying the summation operator $ \Delta t\sum_{n=0}^{l} $ to both sides of \reff{3.52}, the \reff{3.38} holds. The proof is complete.
\end{proof}
\begin{theorem}\label{2021}
	Let $ \left\lbrace (\pmb\tau_{h}^{n},\delta_{h}^{n},\varpi_{h}^{n})\right\rbrace_{n\geq0}  $ be defined by the (MFEA), then there holds
	\begin{align}
		&\max_{0\leq n \leq l}\left[\sqrt{\gamma}\left\|\varepsilon(Z_{\pmb\tau}^{n+1}) \right\|_{L^{2}(\Omega)}+\sqrt{\chi_2}\left\|G_{\varpi}^{n+\theta} \right\|_{L^{2}(\Omega)}\right.\label{3.53} \\
		&\left.+\sqrt{\chi_3}\left\|\lambda^{*}d_{t}\div Z_{\pmb\tau}^{l+1}+G_{\delta}^{l+1} \right\|_{L^{2}(\Omega)}\right]+\left[  \varDelta t\sum_{n=1}^{l}\dfrac{K}{\theta_f}\left\|\nabla\hat{Z}_{p}^{n+1} \right\|_{L^{2}(\Omega)}^{2}  \right]^{\frac{1}{2}}\leq \hat{C}_{1}(T)\varDelta t+\hat{C}_{2}(T)h^{2} \nonumber
	\end{align}
	provided that $ \varDelta t=O(h^{2}) $ when $ \theta=0 $ and $ \varDelta t>0 $ when $ \theta=1 $. Here
	\begin{eqnarray}
		&&\hat{C}_{1}(T)=\hat{C}\left\|\varpi_{t} \right\|_{L^{2}((0,T);L^{2}(\Omega))}+\hat{C}\left\|\varpi_{tt}\right\|_{L^{2}((0,T);H^{1}(\Omega)^{'})}\\
		&&+\hat{C}\left\|\div\pmb\tau_{tt}\right\|_{L^{2}((0,T);H^{1}(\Omega)^{'})},\no\\
		&& \hat C_{2}(T)=\hat{C}\left\|\delta_{t} \right\|_{L^{2}((0,T);H^{2}(\Omega))}+\hat{C}\left\|\delta \right\|_{L^{\infty}((0,T);H^{2}(\Omega))}\\
		&&+\hat{C}\left\|\pmb\tau \right\|_{L^{2}((0,T);H^{3}(\Omega))}+\hat{C}\left\|\nabla\cdot\pmb\tau_{t} \right\|_{L^{2}((0,T);H^{2}(\Omega))}.\no
	\end{eqnarray}
\end{theorem}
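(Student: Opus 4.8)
The plan is to argue from the exact error identity \eqref{3.38} of the preceding lemma, which is already a discrete energy balance: its left-hand side equals $\mathcal E_h^{l+1}$ plus $\Delta t\sum_{n=0}^{l}$ of a collection of manifestly nonnegative dissipation terms — the $L^2(\Omega)$-squares of $d_t\varepsilon(Z_{\pmb\tau}^{n+1})$, $\lambda^*d_t^2\div Z_{\pmb\tau}^{n+1}$, $d_tG_\delta^{n+1}$, $d_tG_\varpi^{n+\theta}$, $d_t\div Z_{\pmb\tau}^{n+1}$ and $K^{1/2}\nabla\hat Z_p^{n+1}$ — while $\mathcal E_h^0$ vanishes (or is $O(h^2)$) by the choice of discrete initial data. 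Since each term on the left of \eqref{3.53} is, up to fixed constants, dominated by $\sqrt{\mathcal E_h^{l+1}}$ together with $\bigl(\Delta t\sum_n\tfrac{K}{\theta_f}\|\nabla\hat Z_p^{n+1}\|_{L^2}^2\bigr)^{1/2}$, it suffices to bound the right-hand side of \eqref{3.38} by $C(\Delta t^2+h^4)+C\Delta t\sum_{n=0}^{l}\mathcal E_h^{n+1}$ after returning a small fraction of the dissipation to the left, and then to invoke the discrete Gronwall inequality and take square roots.

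I would split the right-hand side of \eqref{3.38} into three groups. \emph{Projection-error pairings.} Several, such as those of the form $(F_\delta^{n+1},\div w_h)$ with $w_h$ a discrete polynomial, vanish by the $L^2$-orthogonality \eqref{3.30} of $\mathcal Q_h$; the survivors — essentially $(\div d_tY_{\pmb\tau}^{n+1},G_\delta^{n+1})$, $\lambda^*\chi_3(d_t^2\div Y_{\pmb\tau}^{n+1},G_\delta^{n+1})$ and $(d_tG_\varpi^{n+\theta},Y_p^{n+1}-F_p^{n+1})$ — are treated by Cauchy–Schwarz, Young's inequality and the approximation bounds \eqref{3.34}–\eqref{3.36}, using $\mathcal R_h$ with $s=2$ for $\pmb\tau\in H^3$ and $\mathcal S_h,\mathcal Q_h$ with $s=1$ for $\delta,\varpi,p\in H^2$; the $d_tG_\delta$- and $d_tG_\varpi$-factors are absorbed, the remaining $\|G_\delta^{n+1}\|_{L^2}^2$ is reserved for Gronwall, and the net outcome (after square roots) is the $h^2$-contribution collected in $\hat C_2(T)$. \emph{Consistency remainders.} The terms $(R_h^{n+\theta},\hat Z_p^{n+1})$ and $\lambda^*\chi_3(\mathbf R_h^{n+1},G_\delta^{n+1})$ are controlled via the integral Taylor bounds $\|R_h^{n+\theta}\|_{H^1(\Omega)'}\le C\Delta t^{1/2}\|\varpi_{tt}\|_{L^2(t_n,t_{n+1};H^1(\Omega)')}$ and its analogue for $\mathbf R_h^{n+1}$ in terms of $\div\pmb\tau_{tt}$; pairing and applying Young's inequality absorbs the good factor into the $\nabla\hat Z_p$-dissipation and leaves, after $\Delta t\sum_n$, $\Delta t^2\|\varpi_{tt}\|_{L^2(H^1(\Omega)')}^2$ and $\Delta t^2\|\div\pmb\tau_{tt}\|_{L^2(H^1(\Omega)')}^2$, i.e. the $\Delta t$-part of $\hat C_1(T)$.

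The third group — the terms carrying the factor $(1-\theta)$, present only for $\theta=0$ — is where the real work lies. For the time-lag term $\chi_1\Delta t(d_t\varpi(t_{n+1}),G_\delta^{n+1})$ I would use $\|d_t\varpi(t_{n+1})\|_{L^2}\le\Delta t^{-1/2}\|\varpi_t\|_{L^2(t_n,t_{n+1};L^2)}$, which after $\Delta t\sum_n$ and Young's inequality gives $\Delta t^2\|\varpi_t\|_{L^2(L^2)}^2$ (the $\|\varpi_t\|$-entry of $\hat C_1$) plus $C\Delta t\sum_n\|G_\delta^{n+1}\|_{L^2}^2$; this last sum is \emph{not} directly absorbable, so I would bound $\|G_\delta^{n+1}\|_{L^2}\lesssim\|\varepsilon(Z_{\pmb\tau}^{n+1})\|_{L^2}+\|F_\delta^{n+1}\|_{L^2}$ from the elasticity relation \eqref{3.43} and the inf–sup condition \eqref{3.3}, routing $\Delta t\sum_n\|\varepsilon(Z_{\pmb\tau}^{n+1})\|_{L^2}^2\le C\Delta t\sum_n\mathcal E_h^{n+1}$ into the Gronwall step. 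For the gradient terms $(1-\theta)\tfrac{\chi_1\Delta t}{\theta_f}(Kd_t\nabla Z_\delta^{n+1},\nabla\hat Z_p^{n+1})$ and $(1-\theta)\tfrac{\chi_1\lambda^*\Delta t}{\theta_f}(Kd_t^2\nabla\div Z_{\pmb\tau}^{n+1},\nabla\hat Z_p^{n+1})$ I would apply the inverse inequality \eqref{3.4} to replace $\|\nabla(Z_\delta^{n+1}-Z_\delta^n)\|_{L^2}$ (resp. $\|\nabla(\div Z_{\pmb\tau}^{n+1}-\div Z_{\pmb\tau}^n)\|_{L^2}$) by $h^{-1}$ times the corresponding $L^2$-norm, and then, exactly as in \eqref{3.29}, bound $\|Z_\delta^{n+1}-Z_\delta^n\|_{L^2}\le\tfrac{\gamma\Delta t}{\beta_1}\|d_t\varepsilon(Z_{\pmb\tau}^{n+1})\|_{L^2}$; the two terms then become quantities of order $\tfrac{\Delta t}{h^2}$ times products of the $d_t\varepsilon(Z_{\pmb\tau})$- (resp. $\lambda^*d_t^2\div Z_{\pmb\tau}$-) dissipation and the $\nabla\hat Z_p$-dissipation, hence absorbable precisely when $\Delta t\le\min\{\ldots\}h^2$, in the same spirit as the lemma preceding \eqref{3.26}. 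For $\theta=1$ these terms are absent, so the mesh restriction is unnecessary.

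Collecting everything, \eqref{3.38} reduces to $\mathcal E_h^{l+1}+\tfrac12(\text{dissipation})\le C(\Delta t^2+h^4)+C\Delta t\sum_{n=0}^{l}\mathcal E_h^{n+1}$; the discrete Gronwall inequality gives $\mathcal E_h^{l+1}+(\text{dissipation})\le C(T)(\Delta t^2+h^4)$, and taking square roots together with the first paragraph yields \eqref{3.53} with the stated $\hat C_1(T),\hat C_2(T)$. I expect the main obstacle to be precisely the $\theta=0$ case: because $\varpi$ in \eqref{3.6} and the pressure gradient in \eqref{3.7} are treated explicitly, one is forced to trade a factor $h^{-1}$ from the inverse inequality against a factor $\Delta t$, so the estimate only closes under $\Delta t=O(h^2)$ and demands careful tracking of the absorption constants; a secondary subtlety is distinguishing which pairings truly vanish by $L^2$-orthogonality from those that must be carried through the inf–sup bound and the Gronwall argument.
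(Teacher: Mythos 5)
Your overall route is exactly the paper's: start from the error identity \reff{3.38} with $\mathcal E_h^0=0$, estimate the right-hand side term by term (Cauchy--Schwarz/Young plus the projection bounds \reff{3.34}--\reff{3.36} for the projection errors, the Taylor-integral bounds in $H^1(\Omega)'$ for $R_h^{n+\theta}$ and $\mathbf R_h^{n+1}$, the $\Delta t^{-1/2}$ bound for the time-lag term with the inf--sup detour through \reff{3.43} and \reff{3.3}, and the inverse inequality \reff{3.4} combined with the \reff{3.29}-type bound for the two explicit $(1-\theta)$ gradient terms, which is precisely where $\Delta t=O(h^2)$ enters), absorb into the dissipation, and close with the discrete Gronwall inequality. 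The paper splits the right-hand side into $\Phi_1,\dots,\Phi_9$ and delegates $\Phi_2,\Phi_3,\Phi_4,\Phi_5,\Phi_8$ to earlier references, but where it does give details ($\Phi_1,\Phi_6,\Phi_7,\Phi_9$) your treatment coincides with it.

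There is, however, one concrete error: you claim that the pairings of the form $(F_\delta^{n+1},\div w_h)$ --- i.e.\ $(F_\delta^{n+1},\div d_tZ_{\pmb\tau}^{n+1})$ and $\chi_3\lambda^*\Delta t(d_tF_\delta^{n+1},\div d_t^2Z_{\pmb\tau}^{n+1})$ --- vanish by the $L^2$-orthogonality \reff{3.30} of $\mathcal Q_h$. They do not: $\mathcal Q_h$ projects onto $X_h^k$, which consists of \emph{continuous} piecewise polynomials, whereas $\div\bv_h$ for a Taylor--Hood velocity $\bv_h\in\bP_2$ is piecewise linear but discontinuous across element boundaries, hence not in $X_h^k$. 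The paper accordingly keeps both terms ($\Phi_1$ and the first part of $\Phi_2$) and estimates them --- $\Phi_1$ directly by Young's inequality against the $\chi_3\lambda^{*2}\Delta t^2\|d_t^2\div Z_{\pmb\tau}^{n+1}\|^2$ dissipation, and the $(F_\delta^{n+1},\div d_tZ_{\pmb\tau}^{n+1})$ term via the argument of the cited reference (summation by parts in $n$), which is the source of the $\|F_\delta^{l+1}\|_{L^2(\Omega)}^2$ and $\Delta t\sum_n\|d_tF_\delta^{n+1}\|_{L^2(\Omega)}^2$ contributions visible in the paper's final Gronwall display and of the $\|\delta\|_{L^\infty((0,T);H^2(\Omega))}$ and $\|\delta_t\|_{L^2((0,T);H^2(\Omega))}$ entries of $\hat C_2(T)$. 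Note that a naive Cauchy--Schwarz on $\Delta t\sum_n(F_\delta^{n+1},\div d_tZ_{\pmb\tau}^{n+1})$ absorbed against the $\Delta t$-weighted dissipation $\frac{\gamma\Delta t^2}{2}\sum_n\|d_t\varepsilon(Z_{\pmb\tau}^{n+1})\|^2$ would leave a non-convergent $\sum_n\|F_\delta^{n+1}\|^2\sim h^4/\Delta t$ remainder, so these terms genuinely require either the summation-by-parts device or absorption into the unweighted $\lambda^*\Delta t\sum_n\|d_t\div Z_{\pmb\tau}^{n+1}\|^2$ dissipation; simply discarding them leaves a gap. With that correction the rest of your argument goes through as in the paper.
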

\begin{proof}
	Using \reff{3.38} and the fact of $ Z_{\boldsymbol{u}}^{0}=\boldsymbol{0},~Z_{\delta}^{0}=0$ and $ Z_{\varpi}^{-1}=0$, we have
	\begin{eqnarray}
		&&\quad\qquad\mathcal{E}_{h}^{l}+\Delta t\sum_{n=0}^{l}\left[\dfrac{\Delta t}{2}\left(\chi_3\left\|\lambda^{*}d_{t}^{2}\div Z_{\pmb\tau}^{n+1} \right\|_{L^2(\Omega)}^{2}+\gamma\left\|d_{t}\varepsilon(Z_{\pmb\tau}^{n+1}) \right\|_{L^2(\Omega)}^{2}\right.\right.\label{3.56}\\
		&&\left.+\chi_3\left\| d_{t}G_{\delta}^{n+1} \right\|_{L^2(\Omega)}^{2}+\chi_2\left\| d_{t}G_{\varpi}^{n+\theta} \right\|_{L^2(\Omega)}^{2}+\gamma\chi_3\lambda^{*}\Delta t\left\|d_{t}^{2}\varepsilon(Z_{\pmb\tau}^{n+1}) \right\|_{L^{2}(\Omega)}^{2}\right)\no\\
		&&\left.+\lambda^{*}\left\|d_{t}\div Z_{\pmb\tau}^{n+1} \right\|_{L^{2}(\Omega)}^{2}+\dfrac{1}{\theta_f}(K(\nabla \hat{Z}_{p}^{n+1},\nabla\hat{Z}_{p}^{n+1})\right] \no\\
		&&=\Phi_{1}+\Phi_{2}+\Phi_{3}+\Phi_{4}+\Phi_{5}+\Phi_{6}+\Phi_{7}+\Phi_{8}+\Phi_{9},\no 
	\end{eqnarray}
where
\begin{eqnarray}
	&&\Phi_{1}=\chi_{3}\lambda^{*}(\Delta t)^{2}\sum_{n=0}^{l}(d_{t}F_{\delta}^{n+1},\div d_{t}^{2}Z_{\pmb\tau}^{n+1}),\no\\
	&&\Phi_{2}=\varDelta t\sum_{n=0}^{l}\left[
	(F_{\delta}^{n+1},\div d_{t}Z_{\pmb\tau}^{n+1})-(\div d_{t}Y_{\pmb\tau}^{n+1},G_{\delta}^{n+1})\right],\no\\
	&&\Phi_{3}=\Delta t\sum_{n=0}^{l}\chi_1(1-\theta)\varDelta t(d_{t}^{2}\varpi(t_{n+1}),G_{\delta}^{n+1}),\no\\
	&&\Phi_{4}=\Delta t\sum_{n=0}^{l}(1-\theta)\dfrac{\chi_1\varDelta t}{\theta_f}\left(Kd_{t}\nabla Z_{\delta}^{n+1},\nabla\hat{Z}_{p}^{n+1}\right),\no\\
	&&\Phi_{5}=\Delta t\sum_{n=0}^{l}(d_{t}G_{\varpi}^{n+\theta},Y_{p}^{n+1}-F_{p}^{n+1}),\no\\
	&&\Phi_{6}=\Delta t\sum_{n=0}^{l}(1-\theta)\dfrac{\chi_1\lambda^{*}\varDelta t}{\theta_f}(Kd_{t}^{2}\nabla\div Z_{\pmb\tau}^{n+1},\nabla \hat{Z}_{p}^{n+1}),\no\\
	&&\Phi_{7}=\Delta t\sum_{n=0}^{l}-\lambda^{*}\chi_3(d_{t}^{2}\div Y_{\pmb\tau}^{n+1},G_{\delta}^{n+1}) ,\no\\
	&&\Phi_{8}=\Delta t\sum_{n=0}^{l}(R_{h}^{n+\theta},\hat{Z}_{p}^{n+1}),\no\\
	&&\Phi_{9}=\Delta t\sum_{n=0}^{l}\lambda^{*}\chi_3(\mathbf{R}_{h}^{n+1},G_{\delta}^{n+1}).\no
\end{eqnarray}
Next, we estimate each term on the right-hand of \reff{3.56}. For the boundness of $ \Phi_{2}, \Phi_{3}, \Phi_{4}$ and $ \Phi_{8} $, one can refer to \cite{fgl14}. As for the boundness of $ \Phi_{5} $, one can refer to \cite{201912097}. Here we omit the more details. For $\Phi_{1}$ and $ \Phi_{7} $, using Cauchy-Schwarz inequality and Young inequality, we have
\begin{eqnarray}
	&&\qquad\Phi_{1}=\chi_{3}\lambda^{*}(\Delta t)^2\sum_{n=0}^{l}(d_{t}F_{\delta}^{n+1},\div d_{t}^{2}Z_{\pmb\tau}^{n+1})\label{eq20220205-0}\\
	&&\leq\chi_{3}\lambda^{*}(\Delta t)^2\sum_{n=0}^{l}\left\|d_{t}F_{\delta}^{n+1}
	\right\|_{L^{2}(\Omega)}\left\|\div d_{t}^{2}Z_{\pmb\tau}^{n+1} \right\|_{L^{2}(\Omega)}\no\\
	&&\leq(\Delta t)^{2} \sum_{n=0}^{l}\left[\chi_3\left\|d_{t}F_{\delta}^{n+1} \right\|_{L^{2}(\Omega)}^{2}+\dfrac{\chi_3(\lambda^{*})^{2}}{4}\left\|\div d_{t}^{2}Z_{\pmb\tau}^{n+1} \right\|_{L^{2}(\Omega)}^{2} \right],\no\\ 
	&&\Phi_{7}=\Delta t\sum_{n=0}^{l}-\lambda^{*}\chi_3(d_{t}^{2}\div Y_{\pmb\tau}^{n+1},G_{\delta}^{n+1})\label{eq20220205-1}\\
	&&\leq\lambda^{*}\chi_3\Delta t\sum_{n=0}^{l}\left\|d_{t}^{2}\div Y_{\pmb\tau}^{n+1} \right\|_{L^{2}(\Omega)}\left\|G_{\delta}^{n+1} \right\|_{L^{2}(\Omega)}\no\\
	&&\leq\lambda^{*}\chi_3\Delta t\sum_{n=0}^{l}\left[\dfrac{1}{2}\left\|d_{t}^{2}\div Y_{\pmb\tau}^{n+1} \right\|_{L^{2}(\Omega)}^{2}+\dfrac{1}{2}\left\|G_{\delta}^{n+1} \right\|_{L^{2}(\Omega)}^{2} \right].\no 	 
	\end{eqnarray}
When $ \theta=0 $, using Cauchy-Schwarz inequality and Young inequality for $ \Phi_{6} $, we get
\begin{eqnarray}
	&&\Phi_{6}=\Delta t\sum_{n=0}^{l}\dfrac{\chi_1\lambda^{*}\varDelta t}{\theta_f}(Kd_{t}^{2}\nabla\div Z_{\pmb\tau}^{n+1},\nabla \hat{Z}_{p}^{n+1})\label{eq20220205-2}\\
	&&\leq(\Delta t)^{2}\sum_{n=0}^{l}\dfrac{\chi_1\lambda^{*}K_{2}}{h\theta_f}\left\|d_{t}^{2}\div Z_{\pmb\tau}^{n+1} \right\|_{L^{2}(\Omega)}\left\|\nabla\hat{Z}_{p}^{n+1} \right\|_{L^{2}(\Omega)}\no\\
	&&\leq(\Delta t)^{2}\sum_{n=0}^{l}\left[ \dfrac{\chi_1^{2}(\lambda^{*})^{2}K_{2}^{2}\Delta t}{h^{2}\theta_fK_{1}}\left\|d_{t}^{2}\div Z_{\pmb\tau}^{n+1} \right\|_{L^{2}(\Omega)}^{2}+\dfrac{K_{1}}{4\theta_f\Delta t}\left\|\nabla\hat{Z}_{p}^{n+1} \right\|_{L^{2}(\Omega)}\right]. \no
\end{eqnarray}
As for $ \Phi_{9} $, using the fact of $
	\left\|\mathbf{R}_{h}^{n+1} \right\|_{H^{1}(\varOmega)^{'}}^{2}\leq\dfrac{\varDelta  t}{3}\int_{t_{n}}^{t_{n+1}}\left\|\div\pmb\tau_{tt} \right\|_{H^{1}(\varOmega)^{'}}^{2}dt$, the Cauchy-Schwarz inequality and Young inequality, we get
\begin{align}
	&\Delta t\sum_{n=0}^{l}\left[ \lambda^{*}\chi_3(\mathbf{R}_{h}^{n+1},G_{\delta}^{n+1})\right.\leq\lambda^{*}\chi_3\Delta t\sum_{n=0}^{l}\left[\left\|\mathbf{R}_{h}^{n+1} \right\|_{H^{1}(\Omega)^{'}}^{2}+\dfrac{1}{4}\left\|G_{\delta}^{n+1} \right\|_{L^{2}(\Omega)}^{2}  \right]\label{eq20220205-3}\\
	&\leq\lambda^{*}\chi_3\sum_{n=0}^{l}\Delta t\left[\dfrac{\Delta t}{3}\left\|\div\pmb\tau_{tt} \right\|_{L^{2}(t_{n},t_{n+1};H^{1}(\Omega)^{'})}^{2}+\dfrac{1}{4}\left\|G_{\delta}^{n+1} \right\|_{L^{2}(\Omega)}^{2}\right] .\no
\end{align}
 Adding \reff{eq20220205-0}-\reff{eq20220205-3}  and applying the discrete Gronwall inequality (cf. \cite{Shen1990}), we have
 \begin{eqnarray}
 	&&\gamma\left\| \varepsilon(Z_{\pmb\tau}^{l+1}) \right\|_{L^2(\Omega)}^{2}+\chi_2\left\|G_{\varpi}^{l+\theta} \right\|_{L^{2}(\Omega)}^{2}+\chi_3\left\|\lambda^{*}d_{t}\div Z_{\pmb\tau}^{l+1}+G_{\delta}^{l+1} \right\|_{L^{2}(\Omega)}^{2}\no\\
 	&&
 	+\varDelta t\sum_{n=0}^{l}\dfrac{K_{1}}{\theta_f}\left\|\nabla\hat{Z}_{p}^{n+1} \right\|_{L^{2}(\Omega)}^{2}\no\\
 	&&\leq\hat{C}\left[ \dfrac{4\gamma\chi_1^{2}}{\beta_{1}^{2}}\left\|\varpi_{t} \right\|_{L^{2}((0,T);L^{2}(\Omega))}^{2}+\dfrac{\theta_f(\Delta t)^{2}}{3K_{1}}\left\|\varpi_{tt}\right\|_{L^{2}((0,T);H^{1}(\Omega)^{'})}^{2}\right.\no\\
 	&&+\dfrac{\lambda^{*}\chi_3(\Delta t)^{2}}{3}\left\|\div\pmb\tau_{tt}\right\|_{L^{2}((0,T);H^{1}(\Omega)^{'})}^{2}+\left\|F_{\delta}^{l+1} \right\|_{L^{2}(\Omega)}^{2}\no\\
 	&&\left.+\Delta t\sum_{n=0}^{l}\left\|d_{t}F_{\delta}^{l+1} \right\|_{L^{2}(\Omega)}^{2}+\Delta t\sum_{n=0}^{l}\left\|d_{t}\div Y_{\pmb\tau}^{l+1} \right\|_{L^{2}(\Omega)}^{2}\right] \no \\
 	&&\leq\hat{C}(\Delta t)^{2}\left( \left\|\varpi_{t} \right\|_{L^{2}((0,T);L^{2}(\Omega))}^{2}+\left\|\varpi_{tt}\right\|_{L^{2}((0,T);H^{1}(\Omega)^{'})}^{2}+\left\|\div\pmb\tau_{tt}\right\|_{L^{2}((0,T);H^{1}(\Omega)^{'})}^{2}\right) \no\\
 	&& +\hat{C}h^{4}\left( \left\|\delta_{t} \right\|_{L^{2}((0,T);H^{2}(\Omega))}^{2}+\left\|\delta \right\|_{L^{\infty}((0,T);H^{2}(\Omega))}^{2}+\left\|\div\pmb\tau_{t} \right\|_{L^{2}((0,T);H^{2}(\Omega))}^{2} \right) \no
 \end{eqnarray}
 provided that $\varDelta t\leq  \min\left\lbrace \dfrac{ \theta_fK_{1}\beta_{1}^{2}h^{2}}{4\gamma\chi_1^{2}K_{2}^{2}},\dfrac{\theta_f\chi_3h^{2}K_{1}}{4\chi_1^{2}K_{2}^{2}}\right\rbrace  $ when $ \theta=0 $, but it holds for all $ \Delta t>0 $ when $ \theta=1 $, the \reff{3.53} holds. The proof is complete.
\end{proof}
\begin{theorem}
	The solution of the (MFEA) satisfies the following error estimates:
	\begin{align}
		&\max_{0\leq n\leq N}\left[\sqrt{\gamma}\left\|\nabla(\pmb\tau(t_{n+1})-\pmb\tau_{h}^{n+1})\right\|_{L^{2}(\Omega )}\right.\\
		&\left.+\sqrt\chi_2\left\|\varpi(t_{n+1})-\varpi_{h}^{n+1}\right\|_{L^{2}(\Omega )}+\sqrt{\chi_3}\left\|\delta(t_{n+1})-\delta_{h}^{n+1}\right\|_{L^{2}(\Omega )} \right]\no\\
		&\leq\check{C}_{1}(T)\varDelta t+\check{C}_{2}(T)h^{2},\nonumber\\
		&\left(\varDelta t\sum_{n=0}^{N}\dfrac{K}{\theta_f}\left\|\nabla p(t_{n+1})-\nabla p_{h}^{n+1}\right\|_{L^{2}(\Omega )}^{2} \right)^{\frac{1}{2}}\leq \check{C}_{1}(T)\varDelta t+\check{C}_{2}(T)h
	\end{align}
	provided that $ \varDelta t=O(h^{2}) $ when $ \theta=0 $ and $ \varDelta t\geq0 $ when $ \theta=1$. Here
	\begin{align*}
		&\check{C}_{1}(T)=\hat{C}_{1}(T),\\
		&\check{C}_{2}(T)=\hat{C}_{2}(T)+\left\|\delta \right\|_{L^{\infty}((0,T);H^{2}(\Omega))}+\left\|\varpi \right\|_{L^{\infty}((0,T);H^{2}(\Omega))}+\left\|\nabla\boldsymbol{u} \right\|_{L^{\infty}((0,T);H^{2}(\Omega))}.
	\end{align*}
\end{theorem}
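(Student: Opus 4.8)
The plan is to run the classical finite element error split: decompose each error into an approximation (projection) part and a purely discrete part, control the former by the interpolation bounds \reff{3.34}--\reff{3.36} together with the solution regularity of Theorem \ref{smooth}, and control the latter by the energy-type estimate \reff{3.53} of Theorem \ref{2021}. Concretely, I would write
\[
E_{\pmb\tau}^{n+1}=Y_{\pmb\tau}^{n+1}+Z_{\pmb\tau}^{n+1},\quad
E_{\varpi}^{n+1}=F_{\varpi}^{n+1}+G_{\varpi}^{n+1},\quad
E_{\delta}^{n+1}=F_{\delta}^{n+1}+G_{\delta}^{n+1},\quad
E_{p}^{n+1}=F_{p}^{n+1}+G_{p}^{n+1},
\]
apply the triangle inequality to every norm in the statement, and estimate the approximation parts by \reff{3.36}, $\|Y_{\pmb\tau}^{n+1}\|_{H^1(\Ome)}\le Ch^2\|\pmb\tau(t_{n+1})\|_{H^3(\Ome)}$, by \reff{3.34}, $\|F_{\delta}^{n+1}\|_{L^2(\Ome)}+\|F_{\varpi}^{n+1}\|_{L^2(\Ome)}\le Ch^2\big(\|\delta(t_{n+1})\|_{H^2(\Ome)}+\|\varpi(t_{n+1})\|_{H^2(\Ome)}\big)$, and by \reff{3.35}, $\|\nabla Y_p^{n+1}\|_{L^2(\Ome)}\le Ch\|p(t_{n+1})\|_{H^2(\Ome)}$. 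Taking $\max_{n}$ or $\ell^2$-in-time sums and invoking Theorem \ref{smooth} then produces exactly the $h^2$ (resp.\ $h$) contributions inside $\check C_2(T)$; the harmless one-step index shift $G_\varpi^{n}\leftrightarrow G_\varpi^{n+1}$ occurring for $\theta=0$ is absorbed into $\max_{0\le n\le N}$.

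For the discrete parts, Theorem \ref{2021} bounds $\sqrt\gamma\|\varepsilon(Z_{\pmb\tau}^{n+1})\|_{L^2(\Ome)}$, $\sqrt{\chi_2}\|G_\varpi^{n+\theta}\|_{L^2(\Ome)}$, the combination $\sqrt{\chi_3}\|\lambda^*d_t\div Z_{\pmb\tau}^{l+1}+G_\delta^{l+1}\|_{L^2(\Ome)}$, and $\big(\Delta t\sum_n\tfrac{K}{\theta_f}\|\nabla\hat Z_p^{n+1}\|^2_{L^2(\Ome)}\big)^{1/2}$ all by $\hat C_1(T)\Delta t+\hat C_2(T)h^2$. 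Since $Z_{\pmb\tau}^{n+1}\in\bV_h\subset\bH^1_\perp(\Ome)$, the Korn-type inequality \reff{2.13} together with the second Korn inequality gives $\|Z_{\pmb\tau}^{n+1}\|_{H^1(\Ome)}\le C\|\varepsilon(Z_{\pmb\tau}^{n+1})\|_{L^2(\Ome)}$, so the displacement error in $H^1$ follows immediately. To extract $\|G_\delta^{n+1}\|_{L^2(\Ome)}$ (hence $E_\delta^{n+1}$) from the \emph{combined} quantity above, I would test \reff{3.43} with $\bv_h\in\bV_h$ and use the discrete inf-sup condition \reff{3.3},
\[
\beta_1\big\|G_\delta^{n+1}-\overline{G_\delta^{n+1}}\big\|_{L^2(\Ome)}\le\sup_{\bv_h\in\bV_h}\frac{(\div\bv_h,G_\delta^{n+1})}{\|\bv_h\|_{H^1(\Ome)}}\le C\big(\gamma\|\varepsilon(Z_{\pmb\tau}^{n+1})\|_{L^2(\Ome)}+\|F_\delta^{n+1}\|_{L^2(\Ome)}\big),
\]
while the mean $\overline{G_\delta^{n+1}}$ equals $C_\delta(t_{n+1})-C_\delta(t_{n+\theta})$ by the $L^2$-projection property and \reff{3.11}, which is $0$ for $\theta=1$ and $O(\Delta t)$ for $\theta=0$ since $C_\delta(\cdot)$ in \reff{2.63} is smooth in $t$. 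Subtracting then also controls $\lambda^*\|d_t\div Z_{\pmb\tau}^{n+1}\|_{L^2(\Ome)}$.

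For the pressure gradient, using $G_p^{n+1}=\chi_1G_\delta^{n+1}+\chi_2G_\varpi^{n+\theta}+\lambda^*\chi_1 d_t\div Z_{\pmb\tau}^{n+1}=\hat Z_p^{n+1}+(Y_p^{n+1}-F_p^{n+1})+\lambda^*\chi_1 d_t\div Z_{\pmb\tau}^{n+1}$ yields
\[
\nabla E_p^{n+1}=\nabla F_p^{n+1}+\nabla\hat Z_p^{n+1}+\nabla(Y_p^{n+1}-F_p^{n+1})+\lambda^*\chi_1\nabla\!\big(d_t\div Z_{\pmb\tau}^{n+1}\big).
\]
Forming $\Delta t\sum_n\tfrac{K}{\theta_f}\|\cdot\|^2_{L^2(\Ome)}$, the first three terms are handled by the approximation bounds of the first paragraph and by Theorem \ref{2021}, while the last is bounded via the inverse inequality \reff{3.4} and $\lambda^*\|d_t\div Z_{\pmb\tau}^{n+1}\|_{L^2(\Ome)}$ — this is precisely why $\Delta t=O(h^2)$ is needed when $\theta=0$ and why the pressure-gradient order drops from $h^2$ to $h$. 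Collecting all bounds and absorbing $\chi_i,\gamma\le C$ gives the two displayed estimates with $\check C_1(T)=\hat C_1(T)$ and $\check C_2(T)$ as stated, for $\theta=0$ under $\Delta t=O(h^2)$ and for $\theta=1$ under $\Delta t>0$.

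The main obstacle is that Theorem \ref{2021} controls only the linear combination $\lambda^*d_t\div Z_{\pmb\tau}+G_\delta$ rather than $G_\delta$ itself (this is the quantity that closes the discrete energy identity), so recovering a clean $O(\Delta t)+O(h^2)$ bound for $\|\delta(t_{n+1})-\delta_h^{n+1}\|_{L^2(\Ome)}$ and the $O(\Delta t)+O(h)$ bound for $\nabla p$ hinges on the auxiliary inf-sup argument on \reff{3.43} and on carefully tracking the $O(\Delta t)$ mean-value corrections furnished by Lemma \ref{lma3.1} (and, for $\theta=0$, on the inverse-inequality step that forces the mesh constraint). Everything else is routine bookkeeping with the triangle inequality and the previously established estimates.
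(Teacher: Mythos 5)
Your proposal follows the same route as the paper: split each error into a projection part and a discrete part, bound the former by \reff{3.34}--\reff{3.36} and the regularity of Theorem \ref{smooth}, bound the latter by Theorem \ref{2021}, and assemble with the triangle inequality. The paper's own proof is a single sentence asserting that the estimates "follow immediately" from this combination, whereas you correctly observe that they do not quite follow immediately: Theorem \ref{2021} controls only the combination $\sqrt{\chi_3}\|\lambda^{*}d_{t}\div Z_{\pmb\tau}^{l+1}+G_{\delta}^{l+1}\|_{L^2(\Ome)}$ and the gradient of $\hat{Z}_p^{n+1}$ (which omits the $\lambda^{*}\chi_1 d_t\div Z_{\pmb\tau}^{n+1}$ contribution to $G_p^{n+1}$), so extracting the stated bounds on $\|\delta(t_{n+1})-\delta_h^{n+1}\|_{L^2(\Ome)}$ and on $\nabla E_p^{n+1}$ requires the extra inf-sup argument on \reff{3.43} (plus control of the mean via Lemma \ref{lma3.1}) and the inverse-inequality step under $\Delta t=O(h^2)$ that you supply. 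These additions are exactly what is needed to make the paper's "immediate" conclusion rigorous, and your version is the more complete of the two.
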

\begin{proof}
	The above estimates follow immediately from an application of the triangle inequality on
	\begin{align*}
		&\pmb\tau(t_{n})-\pmb\tau_{h}^{n}=Y_{\pmb\tau}^{n}+Z_{\pmb\tau}^{n},~~~~~~~~~~~~~~~~~~~~~\delta(t_{n})-\delta_{h}^{n}=Y_{\delta}^{n}+Z_{\delta}^{n}=F_{\delta}^{n}+G_{\delta}^{n},\\
		&\varpi(t_{n})-\varpi_{h}^{n}=Y_{\varpi}^{n}+Z_{\varpi}^{n}=F_{\varpi}^{n}+G_{\varpi}^{n},~~~~~~~p(t_{n})-p_{h}^{n}=Y_{p}^{n}+Z_{p}^{n}=F_{p}^{n}+G_{p}^{n}.
	\end{align*}
	and appealing to \reff{3.34}, \reff{3.35}, \reff{3.36} and Theorem \ref{2021}. The proof is complete.
\end{proof}
\section{Numerical experiments}\label{sec-4}
~

{\bf Test 1.} Let $\Omega=(0,1)\times (0,1)$, $\Gamma_1=\{(1,x_2); 0\leq x_2\leq 1\}$,
$\Gamma_2=\{(x_1,0); 0\leq x_1\leq 1\}$, $\Gamma_3=\{(0,x_2); 0\leq x_2\leq 1\}$,
$\Gamma_4=\{(x_1,1); 0\leq x_1\leq 1\}$, and $T=1$. The source functions are as follows:
\begin{align*}
	\mathbf{F} &=\lambda^{*}\pi^{2}(\sin(\pi x_{1}),\sin(\pi x_{2}))^{T}+(\beta+\gamma)\pi^{2} t(\sin(\pi x_{1}),\sin(\pi x_{2}))^T\\
	&+b_0 t\pi\cos(\pi x_1+\pi x_2)(1,1)^T,\\
	\phi &=a_0\sin(\pi x_{1}+\pi x_{2})+\frac{2K}{\theta_f} t\pi^2\sin(\pi x_1+\pi x_2)+b_0\pi(\cos(\pi x_1)+\cos(\pi x_2)),
\end{align*}
and the boundary and initial conditions are 
\begin{alignat*}{2}
	p &= t\sin(\pi x_{1}+\pi x_{2})  &&\qquad\mbox{on }\partial\Omega_T,\\
	\tau_1 &= t\sin(\pi x_{1}) &&\qquad\mbox{on }\Gamma_j\times (0,T),\, j=1,3,\\
	\tau_2 &= t\sin(\pi x_{2}) &&\qquad\mbox{on }\Gamma_j\times (0,T),\, j=,2,4,\\
	\lambda^{*}\div\pmb\tau_{t}\bn+\sigma\bn-b_{0} \emph{p}\bf{n} &= \mathbf{F}_1 &&\qquad \mbox{on } \p\Ome_T,\\
	\pmb\tau(x,0) = \mathbf{0},  \quad p(x,0) &=0 &&\qquad\mbox{in } \Ome,
\end{alignat*}
where
\begin{align*}
	\mathbf{F}_1(x,t)&=\lambda^{*}\pi(\cos(\pi x_{1})+\cos(\pi x_{2}))(1,1)^T+\gamma\pi t(\cos(\pi x_{1}),\cos(\pi x_{2}))^T\\
	&+\beta\pi t(\cos(\pi x_{1})+\cos(\pi x_{2}))(1,1)^T-b_0 t\sin(\pi x_1+\pi x_2)(1,1)^T.
\end{align*}
The exact solution of this problem is
\[
\pmb\tau(x,t)=t\bigl( \sin(\pi x_1), \sin(\pi x_2) \bigr)^T,\qquad p(x,t)=t\sin(\pi x_{1}+\pi x_{2}).
\]
\begin{table}[H]
	\begin{center}
		\caption{Values of parameters} \label{tab101}
		\begin{tabular}{l c c }
			\hline
			Parameters &Description  &Values  \\ \hline
			$\lambda^{*}$&  Coefficient of secondary consolidation          &1e-5\\
			$\nu $ &  Poisson ratio   & 0.25      \\ 
			$b_0$ &  Biot-Willis constant   & 1e-5   \\ 
			$E$ &  Young's modulus   & 25 \\ 
			$\beta$ &  Lam$\acute{e}$ constant   & 10  \\
			$ K $ & Permeability tensor  & (1e-3) $\bf I$ \\
			$ \gamma $ &  Lam$\acute{e}$ constant  & 10\\
			$ a_0 $ &  Constrained specific storage coefficient  & 0.2\\\hline	
		\end{tabular}
	\end{center}
\end{table}
\begin{table}[!htbp]
	\begin{center}
		\caption{Spatial errors and convergence rates of $\pmb\tau$} \label{tab102}
		\begin{tabular}{l c c c c }
			\hline
			$h$ & $\|\pmb\tau-\pmb\tau_h\|_{L^2}$ &CR &$ \|\pmb\tau-\pmb\tau_h\|_{H^1}$ & CR\\ \hline
			$h=1/4$ & 2.6318e-3  &    & 7.9301e-2       &     \\ 
			$h=1/8$ & 3.1932e-4  & 3.043  & 1.8635e-2     &  2.0893     \\ 
			$h=1/16$ & 3.9427e-5  & 3.0177  & 4.5654e-3      & 2.0292      \\ 
			$h=1/32$ & 4.9094e-6  & 3.0056   & 1.1336e-3     &  2.0098      \\ \hline
		\end{tabular}
	\end{center}
\end{table}
\begin{table}[!htbp]
	\begin{center}
		\caption{Spatial errors and convergence rates of $p$} \label{tab103}
		\begin{tabular}{l c c c c }
			\hline
			$h$ & $\|p-p_h\|_{L^2}$ &CR &$ \|p-p_h\|_{H^1}$ & CR\\ \hline
			$h=1/4$ & 2.6672e-2  &    & 7.3216e-1       &     \\ 
			$h=1/8$ & 5.6605e-3  & 2.2363  & 3.5970e-1     &  1.0254     \\ 
			$h=1/16$ & 1.3277e-3  & 2.0920  & 1.7857e-1     & 1.0103      \\ 
			$h=1/32$ & 3.2584e-4  & 2.0267   & 8.9098e-2    &  1.0030     \\ \hline
		\end{tabular}
	\end{center}
\end{table}
As for the convergence order of time, we define 
\begin{eqnarray}
	\rho_{h,\Delta t}=\dfrac{\left\|v^{h,\Delta t} -v^{h,\frac{\Delta t}{2}}\right\|_{L^{2}} }{\left\|v^{h,\frac{\Delta t}{2}} -v^{h,\frac{\Delta t}{4}}\right\|_{L^{2}}},\no
\end{eqnarray}
where $ v=\pmb\tau,p $. In particular, $ \rho_{h,\Delta t}\approx2 $ when the corresponding order of convergence in time is of $ O(\Delta t) $, one can refer to \cite{20211223}.
\begin{table}[!htbp]
	\begin{center}
		\caption{Spatial errors and convergence rates of $ \pmb\tau $ and $p$ with $ h=\dfrac{1}{8} $} \label{tab103-1}
		\begin{tabular}{l c c c c }
			\hline
			$\Delta t$ & $\|\pmb\tau-\pmb\tau_h\|_{L^2}$ &$ \rho_{h,\Delta t} $ &$ \|p-p_h\|_{L^2}$ & $ \rho_{h,\Delta t} $\\ \hline
			$\Delta t=1/10$ & 5.1594e-9  &    & 4.5958e-5       &     \\ 
			$\Delta t=1/20$ & 2.5796e-9  & 2.0001  & 2.3380e-5     &  1.9657     \\ 
			$\Delta t=1/40$ & 1.2898e-9  & 2.0000  & 1.1793e-5     & 1.9825      \\ 
			$\Delta t=1/80$ & 6.4489e-10  & 2.0000  &5.9228e-6     &  1.9911     \\ \hline
		\end{tabular}
	\end{center}
\end{table}
\begin{figure}[H]
	\begin{minipage}[t]{0.45\linewidth}
		\centering
		\includegraphics[height=1.8in,width=2in]{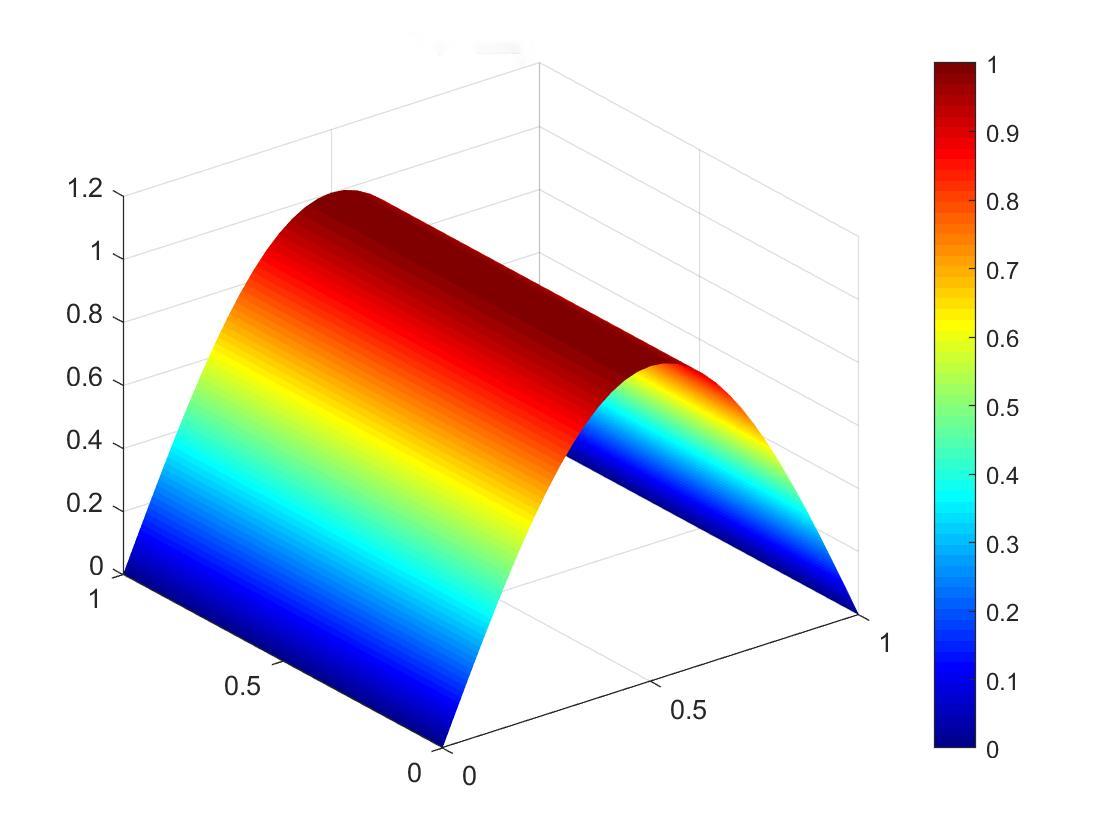}
		\caption{The numerical  displacement\ $( \tau_{1})_{h}^{n+1}$ at the terminal time $T$ with the parameters of Table \ref{tab101}.}\label{figure_1}
	\end{minipage}
	\hspace{0.5in}
	\begin{minipage}[t]{0.45\linewidth}
		\centering
		\includegraphics[height=1.8in,width=2in]{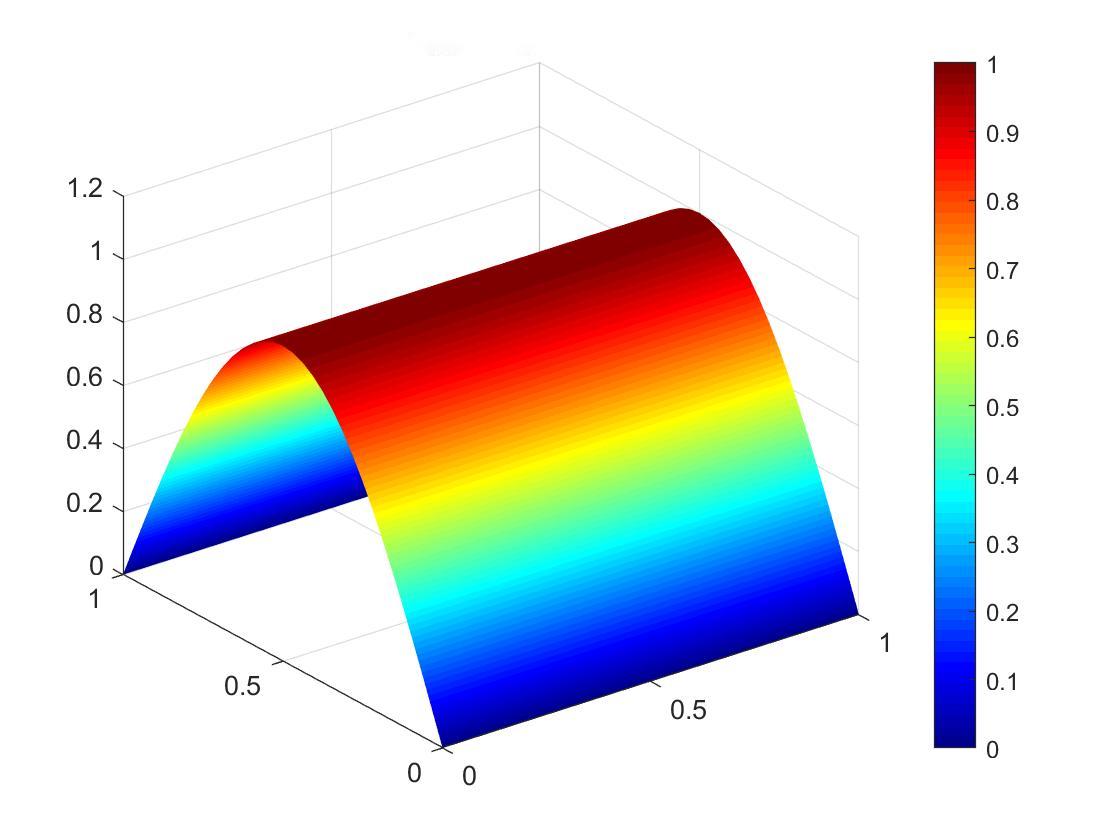}
		\caption{ The numerical  displacement\ $( \tau_{2})_{h}^{n+1}$ at the terminal time $T$ with the parameters of Table \ref{tab101}.}\label{figure_2}
	\end{minipage}
\end{figure}
\begin{figure}[H]
	\begin{minipage}[t]{0.45\linewidth}
		\centering
		\includegraphics[height=1.8in,width=2in]{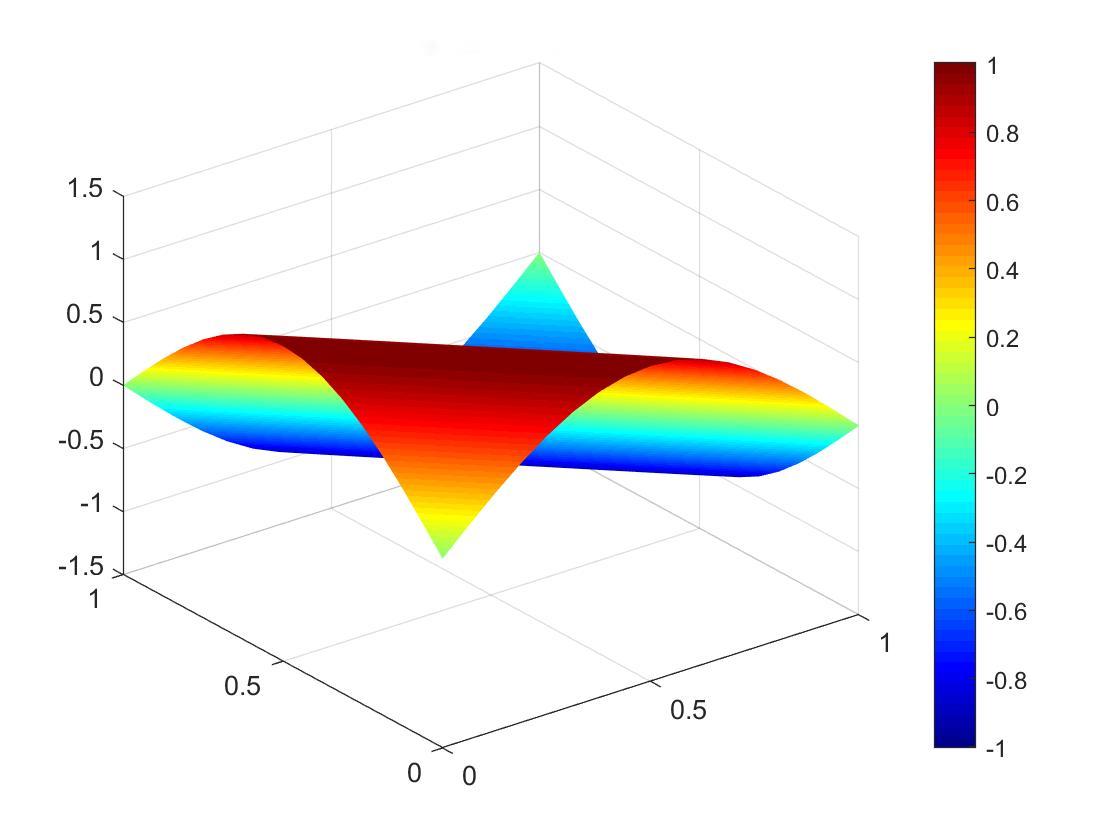}
		\caption{The numerical  pressure\ $p_{h}^{n+1}$ at the terminal time $T$ with the parameters of Table \ref{tab101}.}\label{figure_3}
	\end{minipage}
	\hspace{0.5in}
	\begin{minipage}[t]{0.45\linewidth}
		\centering
		\includegraphics[height=1.8in,width=2.5in]{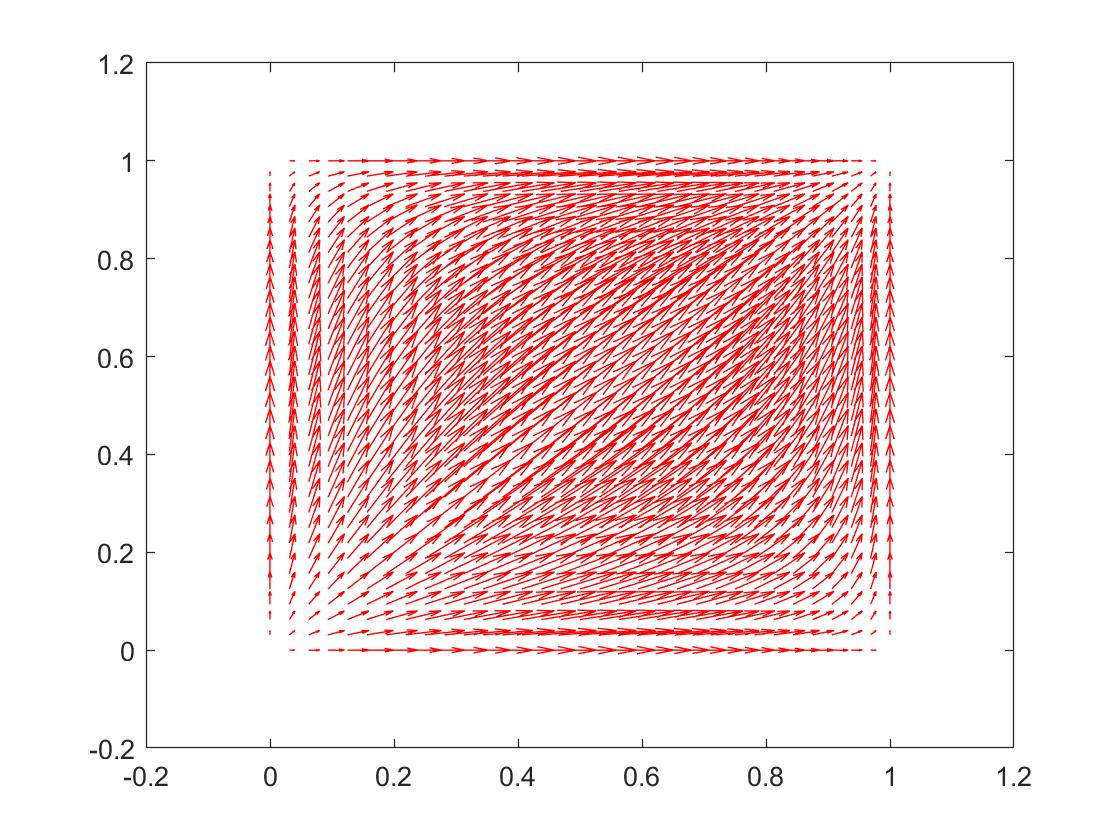}
		\caption{ Arrow plot of the computed displacement $ \pmb\tau $ with the parameters of Table \ref{tab101}.}\label{figure_4}
	\end{minipage}
\end{figure}

\medskip
{\bf Test 2.} The $ \Omega $ and $ T $ are the same as Test 1. The source functions are as follows:
\begin{align*}
	\mathbf{F} &=\lambda^{*}e^{t}(\sin x_{1},\sin x_{2})^{T}+(\beta+\gamma) e^{t}(\sin x_{1},\sin x_{2})^T+b_0 t\pi(\cos(\pi x_{1})\sin(\pi x_{2}),\sin(\pi x_{1})\cos(\pi x_{2}))^T,\\
	\phi &=a_0\sin(\pi x_{1})\sin(\pi x_{2})+\frac{2K}{\theta_f}\pi^{2}t\sin(\pi x_{1})\sin(\pi x_{2})+b_0 e^{t}(\cos x_{1}+\cos x_{2}),
\end{align*}
and the boundary and initial conditions are 
\begin{alignat*}{2}
	p &= t\sin(\pi x_{1})\sin(\pi x_{2})  &&\qquad\mbox{on }\partial\Omega_T,\\
	\tau_1 &= e^{t}\sin x_{1} &&\qquad\mbox{on }\Gamma_j\times (0,T),\, j=1,3,\\
	\tau_2 &=e^{t}\sin x_{2}  &&\qquad\mbox{on }\Gamma_j\times (0,T),\, j=,2,4,\\
	\lambda^{*}\div\pmb\tau_{t}\bn+\sigma\bn-b_{0} \emph{p}\bf{n} &= \mathbf{F}_1 &&\qquad \mbox{on } \p\Ome_T,\\
	\pmb\tau(x,0) = \mathbf{0},  \quad p(x,0) &=0 &&\qquad\mbox{in } \Ome.
\end{alignat*}
where
\begin{align*}
	\mathbf{F}_1(x,t)= \lambda^{*}(\cos x_1+\cos x_2) (n_1,n_2)^T e^t +\gamma e^{t}(\cos x_{1},\cos x_{2})^{T}\\
	+\beta e^{t}(\cos x_{1}+\cos x_{2})(n_{1},n_{2})^{T}-b_0(n_1,n_2)^Tt \sin(\pi x_{1})\sin(\pi x_{2}).
\end{align*}
The exact solution of this problem is
\[
\pmb\tau(x,t)=e^{t} \bigl( \sin x_1, \sin x_2 \bigr)^T,\qquad p(x,t)=t \sin(\pi x_{1})\sin(\pi x_{2}).
\]
\begin{table}[H]
	\begin{center}
		\caption{Values of parameters} \label{tab104}
		\begin{tabular}{l c c }
			\hline
			Parameters &Description  &Values  \\ \hline
			$\lambda^{*}$& Coefficient of secondary consolidation          &1e-5\\
			$\nu $ &  Poisson ratio   & 0.25      \\ 
			$b_0$ &  Biot-Willis constant   & 1e-5   \\ 
			$E$ &  Young's modulus   & 25 \\ 
			$\beta$ &  Lam$\acute{e}$ constant   & 10  \\
			$ K $ & Permeability tensor  & (1e-3) $\bf I$ \\
			$ \gamma $ &  Lam$\acute{e}$ constant  & 10\\
			$ a_0 $ &  Constrained specific storage coefficient  & 0.2\\\hline	
		\end{tabular}
	\end{center}
\end{table}
\begin{table}[!htbp]
	\begin{center}
		\caption{Spatial errors and convergence rates of $\pmb\tau$} \label{tab105}
		\begin{tabular}{l c c c c }
			\hline
			$h$ & $\|\pmb\tau-\pmb\tau_h\|_{L^2}$ &CR &$ \|\pmb\tau-\pmb\tau_h\|_{H^1}$ & CR\\ \hline
			$h=1/4$ & 2.6708e-4  &    & 7.8215e-3       &     \\ 
			$h=1/8$ & 3.3136e-5  & 3.0108  & 1.9334e-3     &  2.0163     \\ 
			$h=1/16$ & 4.1402e-6  & 3.0006  & 4.8037e-4      & 2.0089      \\ 
			$h=1/32$ & 5.1792e-7  & 2.9989   & 1.1971e-4     &  2.0046      \\ \hline
		\end{tabular}
	\end{center}
\end{table}
\begin{table}[!htbp]
	\begin{center}
		\caption{Spatial errors and convergence rates of $p$} \label{tab106}
		\begin{tabular}{l c c c c }
			\hline
			$h$ & $\|p-p_h\|_{L^2}$ &CR &$ \|p-p_h\|_{H^1}$ & CR\\ \hline
			$h=1/4$ & 3.5800e-2  &    & 9.0720e-1      &     \\ 
			$h=1/8$ & 7.8029e-3  & 2.1979  & 4.4210e-1     &  1.0370     \\ 
			$h=1/16$ & 1.8634e-3  & 2.0661  & 2.1887e-1     & 1.0143      \\ 
			$h=1/32$ & 4.6956e-4  & 1.9886   & 1.0910e-1     &  1.0044     \\ \hline
		\end{tabular}
	\end{center}
\end{table}
\begin{table}[!htbp]
	\begin{center}
		\caption{Spatial errors and convergence rates of $ \pmb\tau $ and $p$ with $ h=\dfrac{1}{10} $} \label{tab106-1}
		\begin{tabular}{l c c c c }
			\hline
			$\Delta t$ & $\|\pmb\tau-\pmb\tau_h\|_{L^2}$ &$ \rho_{h,\Delta t} $ &$ \|p-p_h\|_{L^2}$ & $ \rho_{h,\Delta t} $\\ \hline
			$\Delta t=1/10$ & 2.1471e-9  &    & 3.8352e-5      &     \\ 
			$\Delta t=1/20$ &1.0948e-9  & 1.9612  & 1.9469e-5    & 1.9699    \\ 
			$\Delta t=1/40$ & 5.5289e-10  & 1.9801  &9.8098e-6     & 1.9846     \\ 
			$\Delta t=1/80$ & 2.7783e-10  & 1.9900  &4.9240e-6     &  1.9922     \\ \hline
		\end{tabular}
	\end{center}
\end{table}
\begin{figure}[H]
	\begin{minipage}[t]{0.45\linewidth}
		\centering
		\includegraphics[height=1.8in,width=2in]{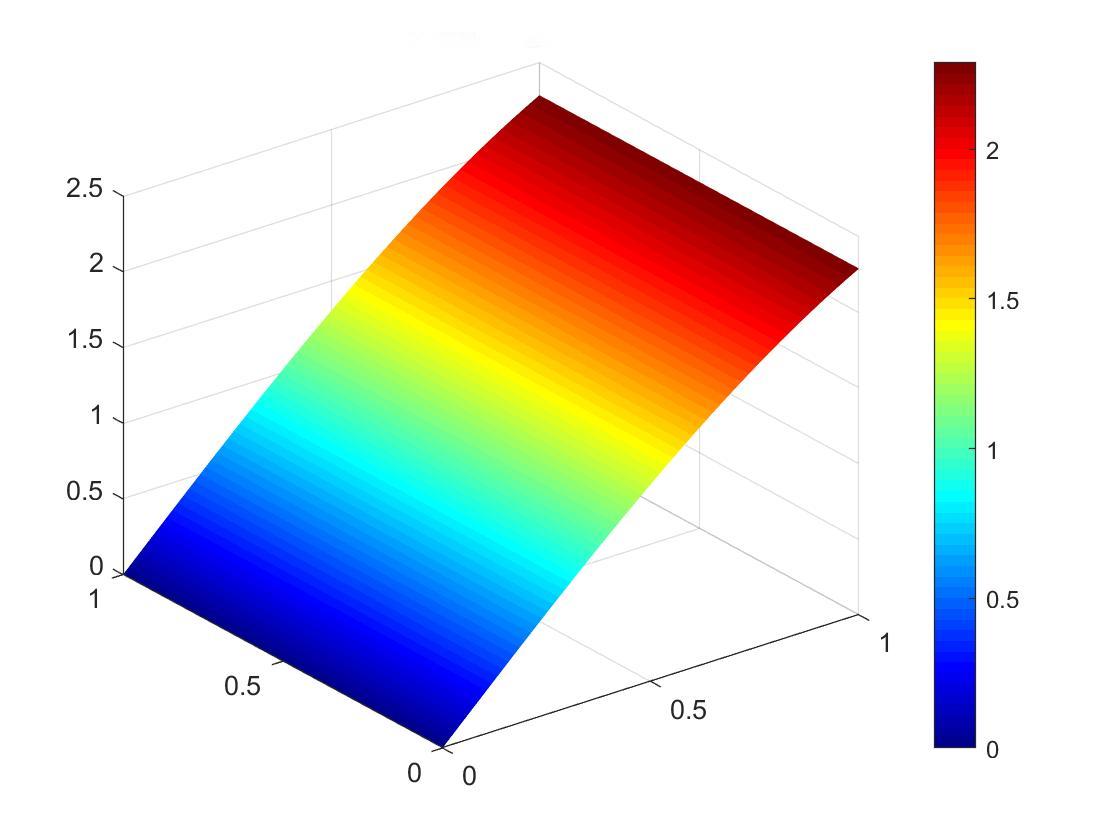}
		\caption{The numerical  displacement\ $( \tau_{1})_{h}^{n+1}$ at the terminal time $T$ with the parameters of Table \ref{tab104}.}\label{figure_5}
	\end{minipage}
	\hspace{0.5in}
	\begin{minipage}[t]{0.45\linewidth}
		\centering
		\includegraphics[height=1.8in,width=2in]{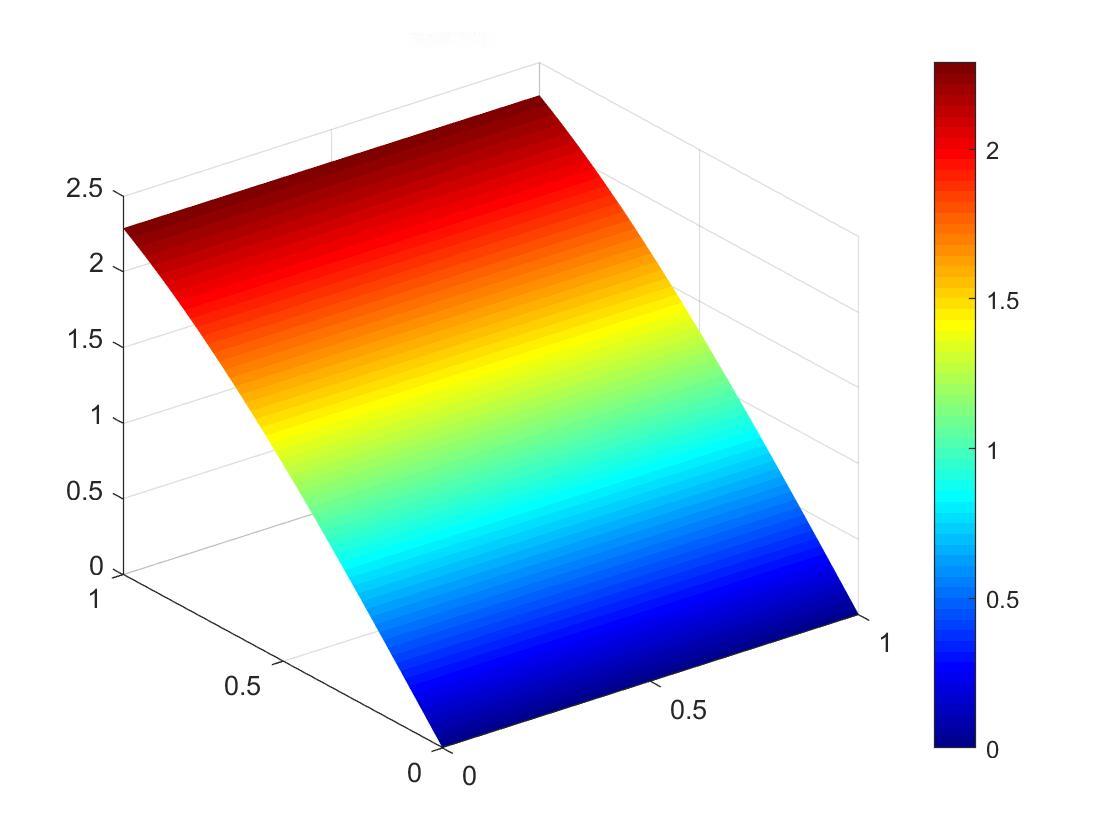}
		\caption{ The numerical  displacement\ $( \tau_{2})_{h}^{n+1}$ at the terminal time $T$ with the parameters of Table \ref{tab104}.}\label{figure_6}
	\end{minipage}
\end{figure}
\begin{figure}[H]
	\begin{minipage}[t]{0.45\linewidth}
		\centering
		\includegraphics[height=1.8in,width=2in]{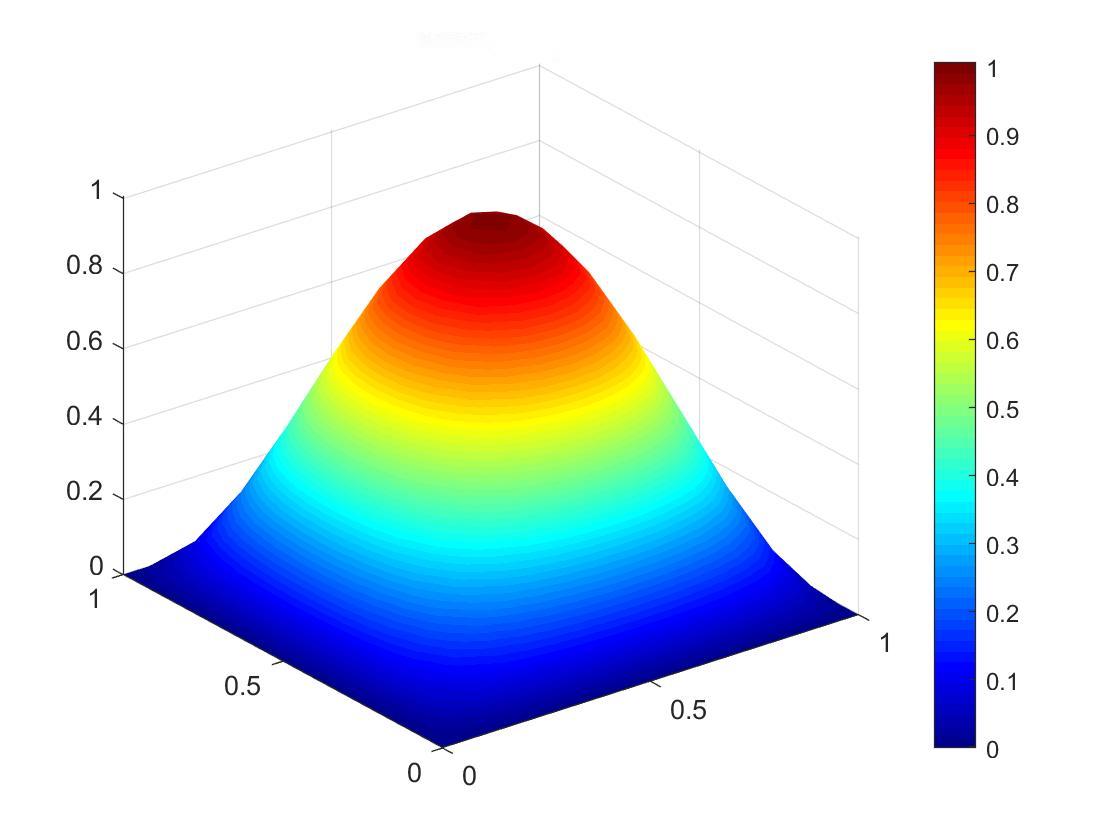}
		\caption{The numerical  pressure\ $p_{h}^{n+1}$ at the terminal time $T$ with the parameters of Table \ref{tab104}.}\label{figure_7}
	\end{minipage}
	\hspace{0.5in}
	\begin{minipage}[t]{0.45\linewidth}
		\centering
		\includegraphics[height=1.8in,width=2.5in]{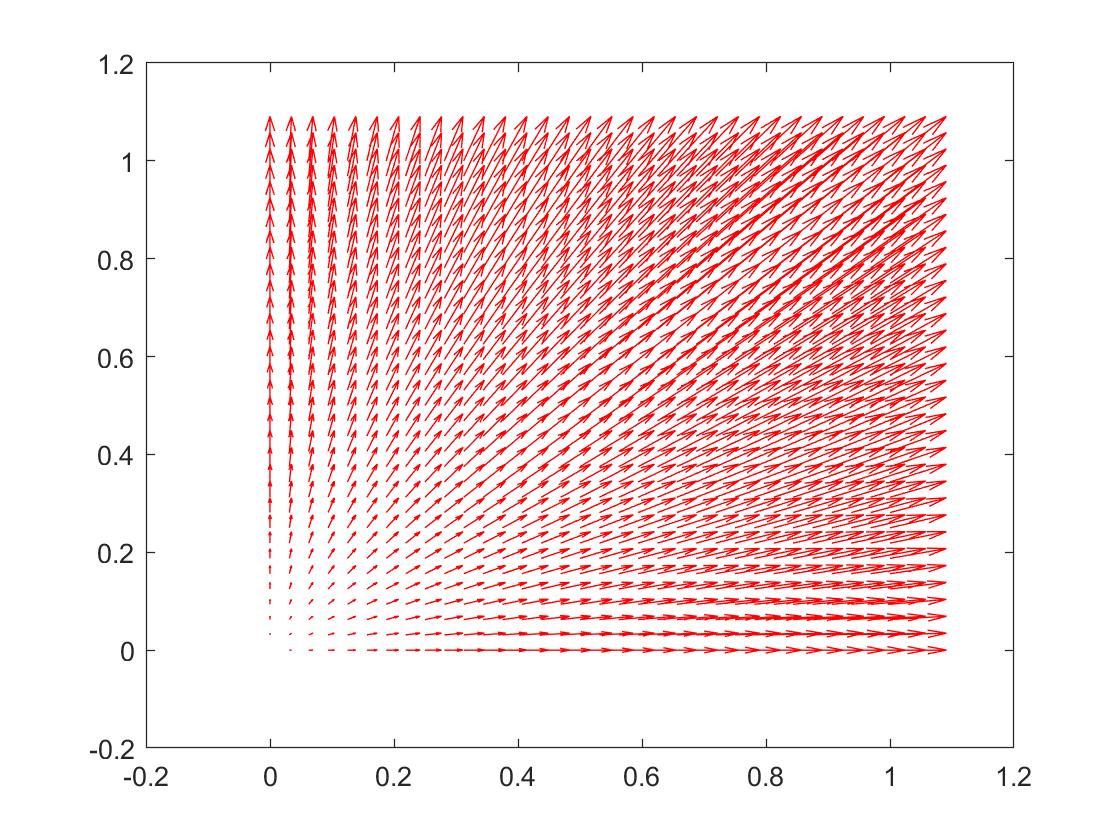}
		\caption{ Arrow plot of the computed displacement $ \pmb\tau $ with the parameters of Table \ref{tab104}.}\label{figure_8}
	\end{minipage}
\end{figure}
Table \ref{tab102} and Table \ref{tab103} display the error of displacement\ $\pmb\tau$ and the pressure\ $p$ with\ $L^2(\Omega)$-norm and\ $H^1(\Omega)$-norm in space at the terminal time $T$ with the parameters of Table  \ref{tab101} of Test 1, which are consistent with the theoretical result. Table \ref{tab103-1} display the error of displacement\ $\pmb\tau$ with\ $L^2(\Omega)$-norm and the pressure\ $p$ with\ $L^2(\Omega)$-norm at $ h=\dfrac{1}{8} $ with the parameters of Table \ref{tab101} of Test 1, which verifies the theoretical result.

Figure  \ref{figure_1} and Figure \ref{figure_2} show the numerical solution of displacement \ $(\tau_{1})_{h}^{n+1}$ and \ $(\tau_{2})_{h}^{n+1}$ at the terminal time\ $T$ with the parameters of Table  \ref{tab101} of Test 1, Figure \ref{figure_3} shows the numerical solution of pressure\ $p_{h}^{n+1}$ at the terminal time\ $T$ with the parameters of Table  \ref{tab101} of Test 1. Figure  \ref{figure_4} show the arrow plot of the computed displacement $ \pmb\tau $ corresponding to the parameters of Table \ref{tab101} of Test 1. Table \ref{tab105}--Table \ref{tab106-1} and Figure  \ref{figure_5}--Figure \ref{figure_8} of Test 2 have a similar description as Test 1.

{\bf Test 3.} This is a benchmark problem, which occurs ``locking" (cf. \cite{PW09}). The $ \Omega $ and $ T $ are the same as Test 1. The source functions are $\mathbf{F} =0, 
	\phi =0$, and the boundary and initial conditions are 
\begin{alignat*}{2}
	p &= 0  &&\qquad\mbox{on }\partial\Omega_T,\\
	\tau_1 &=0 &&\qquad\mbox{on }\Gamma_j\times (0,T),\, j=1,3,\\
	\tau_2 &=0  &&\qquad\mbox{on }\Gamma_j\times (0,T),\, j=,2,4,\\
	\lambda^{*}\div\pmb\tau_{t}\bn+\sigma\bn-b_{0} \emph{p}\bf{n} &= \mathbf{F}_1:=(0,b_0 p) &&\qquad \mbox{on } \p\Ome_T,\\
	\pmb\tau(x,0) = \mathbf{0},  \quad p(x,0) &=0 &&\qquad\mbox{in } \Ome,
\end{alignat*}
where
\begin{align*}
	&p=
	\left\lbrace \begin{aligned}	
		&\sin t  &&when~ x_{1}\in[0.2,0.8)\times(0,T),\\
		&0&&others,
	\end{aligned}\right.
\end{align*}
\begin{table}[H]
	\begin{center}
		\caption{Values of parameters}\label{tab109}
		\begin{tabular}{l c c }
			\hline
			Parameters &Description  &Values  \\ \hline
			$\lambda^{*}$& Coefficient of secondary consolidation          &1e-5\\
			$\nu $ &  Poisson ratio   & 0.045      \\ 
			$b_0$ &  Biot-Willis constant   & 1e-5   \\ 
			$E$ &  Young's modulus   & 20909.091 \\ 
			$\beta$ &  Lam$\acute{e}$ constant   & 1e3  \\
			$ K $ & Permeability tensor  & (1e-7) $\bf I$ \\
			$ \gamma $ &  Lam$\acute{e}$ constant  & 1e4\\
			$ a_0 $ &  Constrained specific storage coefficient  & 2e-10\\\hline	
		\end{tabular}
	\end{center}
\end{table}
\begin{figure}[H]
	\begin{minipage}[t]{0.45\linewidth}
		\centering
		\includegraphics[height=1.8in,width=2in]{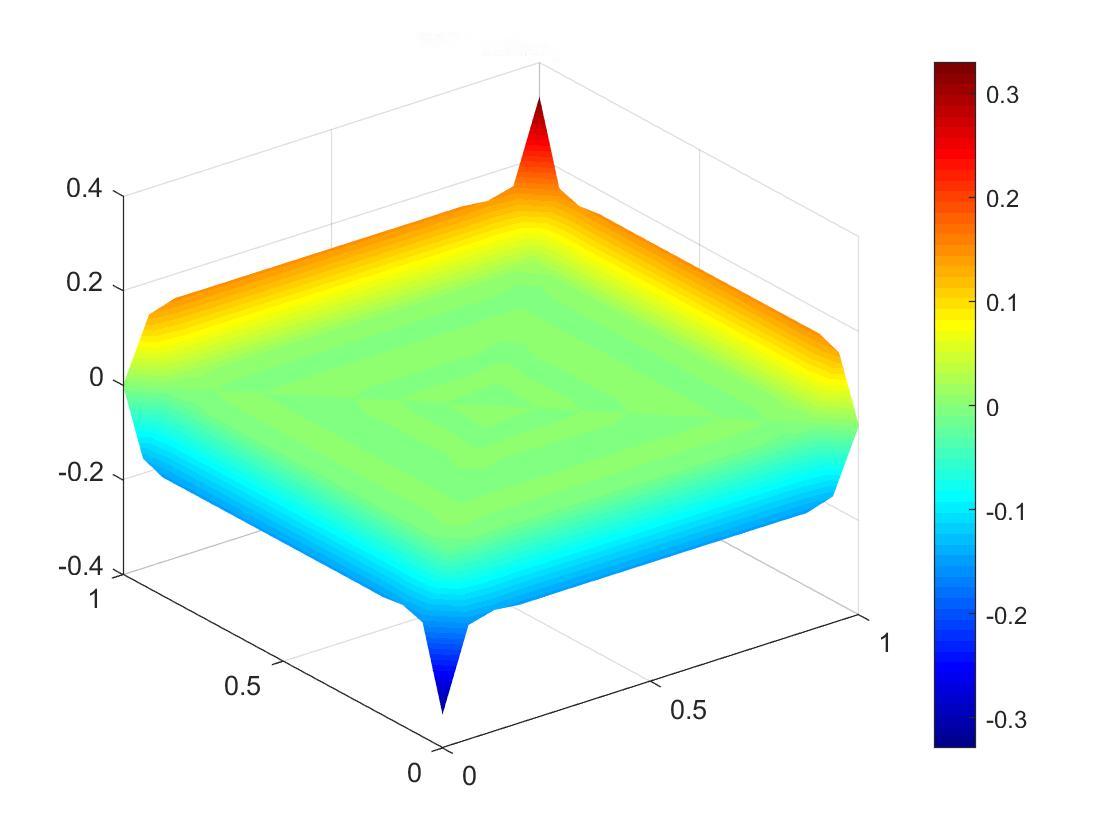}
		\caption{The numerical  pressure\ $p_{h}^{n+1}$ at the terminal time $T$ with the parameters of Table \ref{tab109} for the original model.}\label{figure_9}
	\end{minipage}
	\hspace{0.5in}
	\begin{minipage}[t]{0.45\linewidth}
		\centering
		\includegraphics[height=1.8in,width=2in]{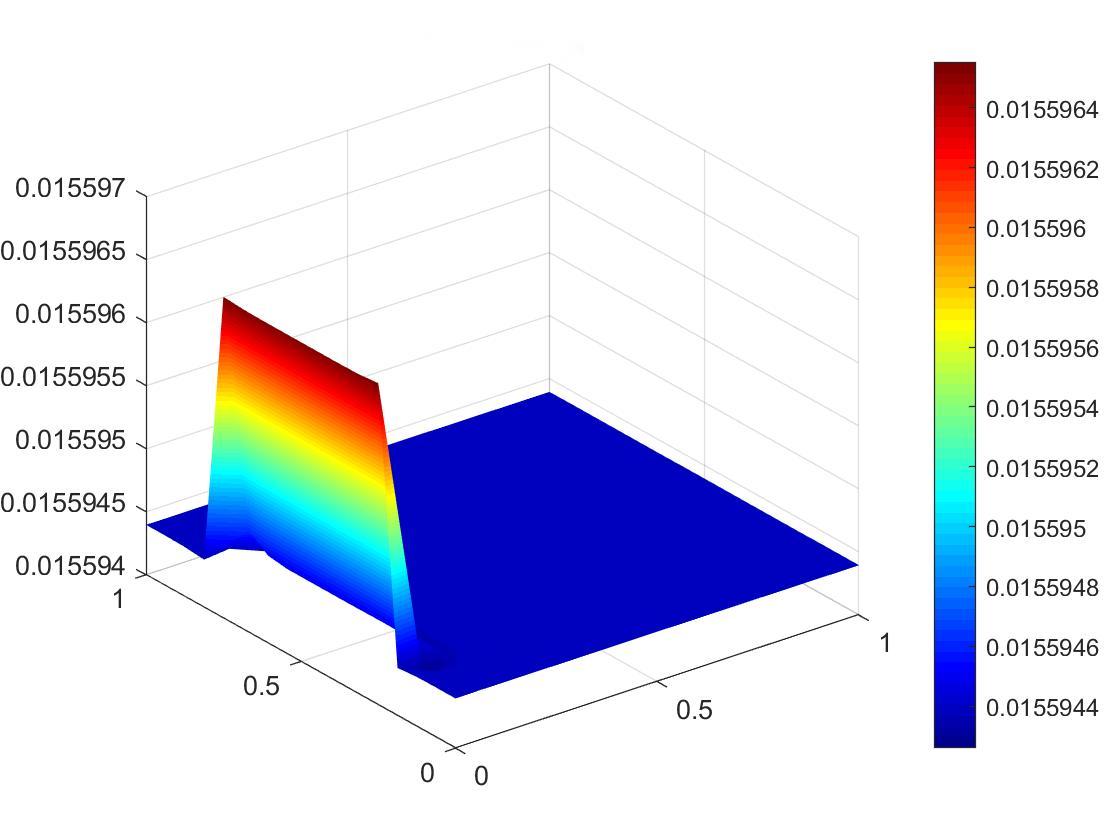}
		\caption{ The numerical  pressure\ $p_{h}^{n+1}$ at the terminal time $T$ with the parameters of Table \ref{tab109} for the reformulated model.}\label{figure_10}
	\end{minipage}
\end{figure}
\begin{figure}[H]
	\centering
	\includegraphics[height=1.8in,width=2.5in]{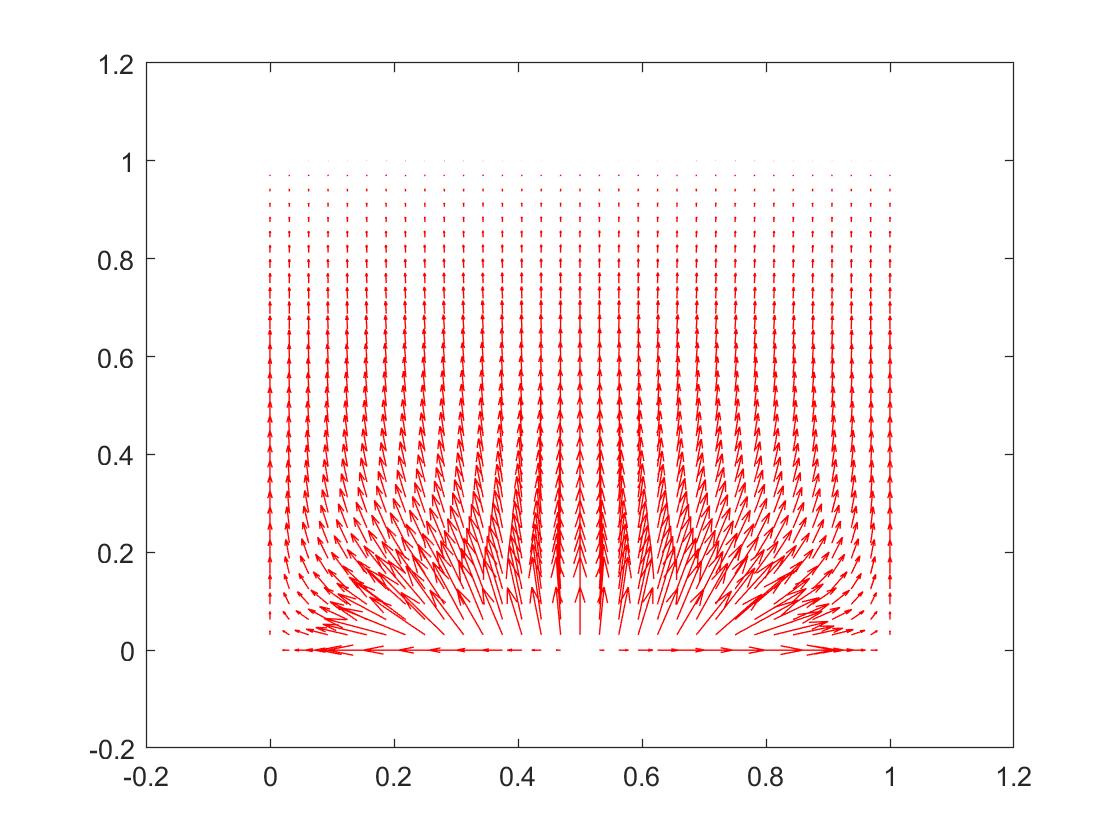}
	\caption{ Arrow plot of the computed displacement $ \pmb\tau $ with the parameters of Table \ref{tab109} for the reformulated model.}\label{figure_101}
\end{figure}
Figure  \ref{figure_9}-Figure\ref{figure_101} show the numerical solution of pressure \ $p_{h}^{n+1}$ for the original model and the reformulated model and the arrow plot of the computed displacement $ \pmb\tau $ for the reformulated model corresponding to the parameters of Table \ref{tab109} of Test 3. It is easy to find that there is  no ``locking phenomenon". 

{\bf Test 4.} This problem is a real two- dimensional footing problem (cf. \cite{20220101}). The simulation domain is a 100 by 100 meters block of porous soil, $\Omega=[-50,50]\times [0,100]$, $ T=0.01s$. At he base of this domain the soil is assumed to be fixed while at some centered upper part of the domain a uniform load of intensity $ \sigma_{0}=10^4 N/m^2$ is applied in a strip of length $40$ m. The whole domain is assumed free to drain. The boundary condition are given as follows
\begin{alignat*}{2}
	p &= 0  &&\qquad\mbox{on }\partial\Omega_T,\\
	\sigma_{xy}=0,\qquad \lambda^{*}(\div\pmb\tau)_{t}+\sigma_{yy} &=-\sigma_{0} &&\qquad\mbox{on }\Gamma_1\times (0,T),\\
	\sigma_{xy}=0,\qquad \lambda^{*}(\div\pmb\tau)_{t}+\sigma_{yy} &=-\sigma_{0} &&\qquad\mbox{on }\Gamma_2\times (0,T),\\
	\pmb\tau &= \mathbf{0} &&\qquad\mbox{on } \partial\Ome\setminus(\Gamma_1\cup\Gamma_1),
\end{alignat*}
where $ \sigma_{xy}=\frac{\gamma}{2}(\frac{\p\tau_{1}}{\p x_{2}}+\frac{\p\tau_{2}}{\p x_{1}}) $, $ \sigma_{yy}=\gamma\frac{\p\tau_{2}}{\p x_{2}}+\beta(\frac{\p\tau_{1}}{\p x_{1}}+\frac{\p\tau_{2}}{\p x_{2}}) $ and
\begin{alignat*}{2}
	\Gamma_1=\left\lbrace(x_{1},x_{2})\in\p\Omega,\left| x_{1}\right|\leq 20,~x_{2}=100  \right\rbrace,\qquad\Gamma_2=\left\lbrace(x_{1},x_{2})\in\p\Omega,\left| x_{1}\right|> 20,~x_{2}=100  \right\rbrace. 
\end{alignat*}
The material properties of the porous medium are given in Table \ref{tab1010}.
\begin{table}[H]
	\begin{center}
		\caption{Values of parameters}\label{tab1010}
		\begin{tabular}{l c c c }
			\hline
			Parameters &Description  &Values  &Unit\\ \hline
			$\lambda^{*}$& Coefficient of secondary consolidation          &1e-2 &-\\
			$\nu $ &  Poisson ratio   & 0.2  &-    \\ 
			$b_0$ &  Biot-Willis constant   & 1&-   \\ 
			$E$ &  Young's modulus   & 3e4 &$ N/m^2 $\\ 
			$\beta$ &  Lam$\acute{e}$ constant   & 8.333e3 &$ N/m^2 $ \\
			$ K $ & Permeability tensor  & (1e-15) $\bf I$ &$ m^2 $\\
			$ \gamma $ &  Lam$\acute{e}$ constant  & 1.25e4&$ N/m^2 $\\
			$ a_0 $ &  Constrained specific storage coefficient  & 2e-8&-\\
			$ \theta_{f} $	& Fluid viscosity &1e-3&Pa s\\\hline
		\end{tabular}
	\end{center}
\end{table}
\begin{figure}[H]
	\begin{minipage}[t]{0.45\linewidth}
		\centering
		\includegraphics[height=1.8in,width=2in]{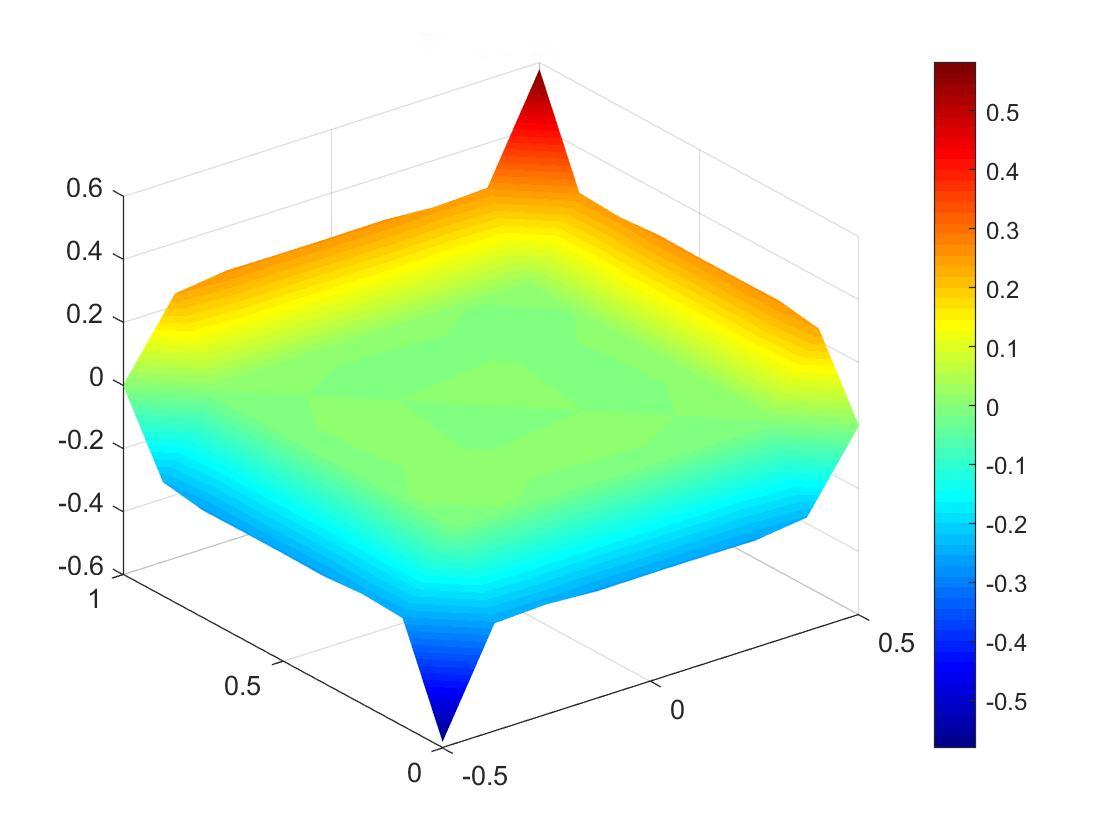}
		\caption{The numerical  pressure\ $p_{h}^{n+1}$ at the terminal time $T$ with the parameters of Table \ref{tab1010} for the original model.}\label{figure_11}
	\end{minipage}
	\hspace{0.5in}
	\begin{minipage}[t]{0.45\linewidth}
		\centering
		\includegraphics[height=1.8in,width=2in]{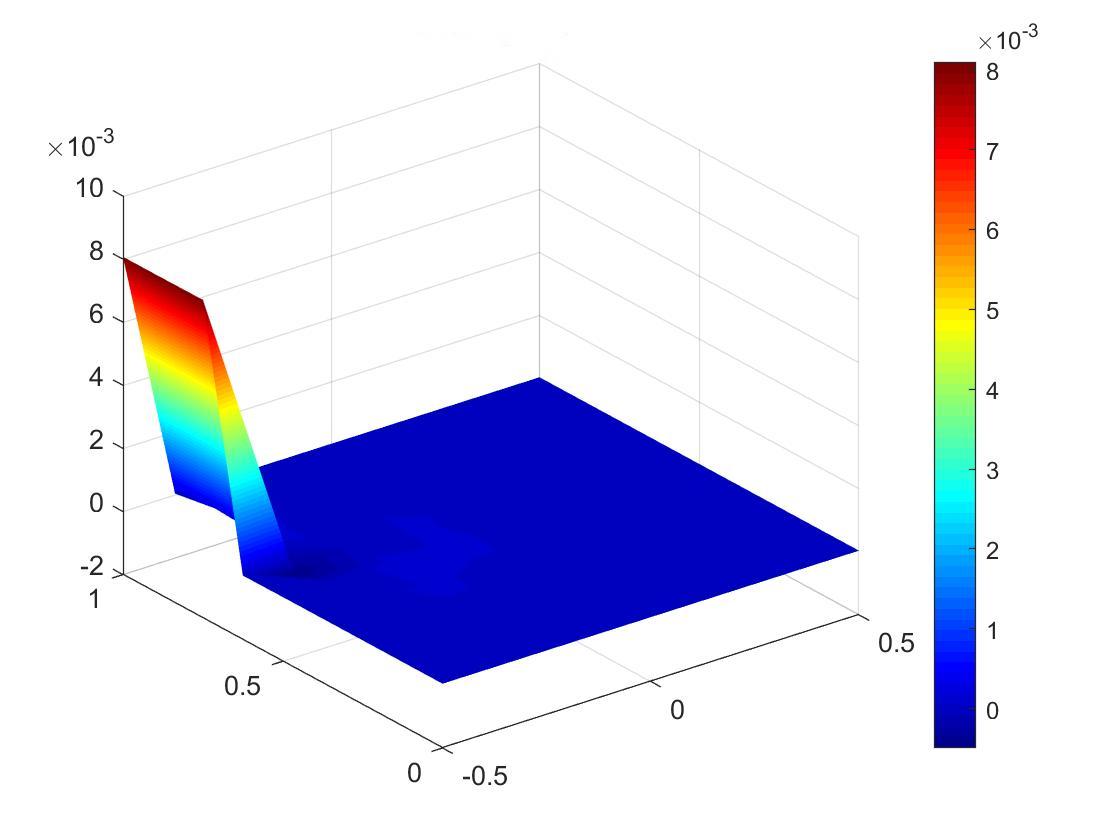}
		\caption{ The numerical  pressure\ $p_{h}^{n+1}$ at the terminal time $T$ with the parameters of Table \ref{tab1010} for the reformulated model.}\label{figure_12}
	\end{minipage}
\end{figure}
\begin{figure}[H]
	\centering
	\includegraphics[height=1.8in,width=2.5in]{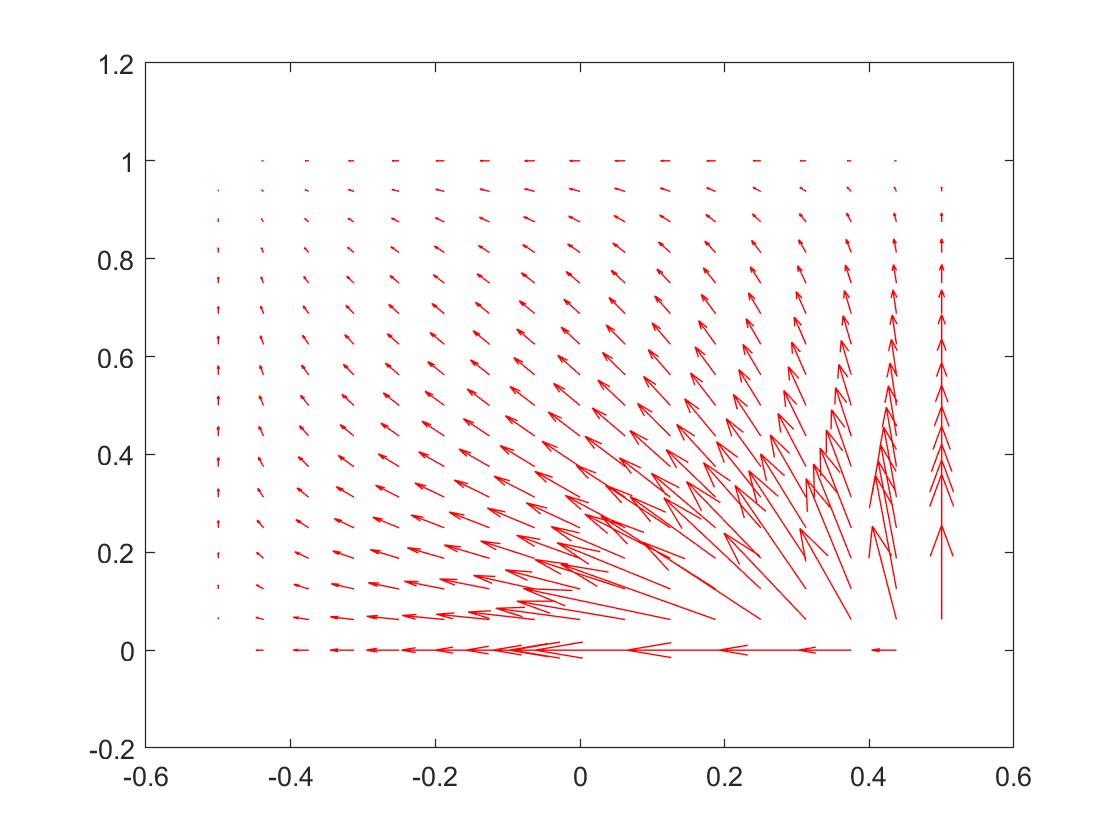}
	\caption{ Arrow plot of the computed displacement $ \pmb\tau $ with the parameters of Table \ref{tab1010} for the reformulated model.}\label{figure_121}
\end{figure}
From Figure \ref{figure_11}-Figure \ref{figure_121}, we find that the arrows near the boundary get along very well with those on the boundary. We conclude that the numerical solution with the reformulated model has no oscillations in numerical pressure. 

\section{Conclusion}
In this paper, we propose a new multiphysics finite element method for a Biot model with secondary consolidation in soil dynamics. To better describe the processes of deformation and diffusion underlying in the original model, we introduce new variables
$q=\div \pmb\tau, \varpi=a_0p+b_0 q, \delta:=b_0 p -\lam q-\lambda^{*}q_{t}$ to reformulate Biot model with secondary consolidation so that we successfully transform the fluid-solid coupling problem into a fluid coupled problem and and the parabolic problem into a Stokes problem, where the multiphysics approach is different from the introduced variables in \cite{fgl14}. Then, we give the energy law and prior error estimate of the weak solution. Also, we design a fully discrete time-stepping scheme to use multiphysics finite element method with $P_2-P_1-P_1$ element pairs for the space variables and backward Euler method for the time variable, and we derive the discrete energy laws and the optimal convergence order error estimates. To the best of our knowledge, it is a complete new method and the first time to give the optimal convergence order error estimates for the new proposed method for a Biot model with secondary consolidation. Also, we show some numerical examples to verify the rationality of theoretical analysis and there is no ``locking phenomenon".

\end{document}